\newcommand{\iinfty}{{\mathchoice
{\begin{minipage}{.15in}\includegraphics[width=.15in]{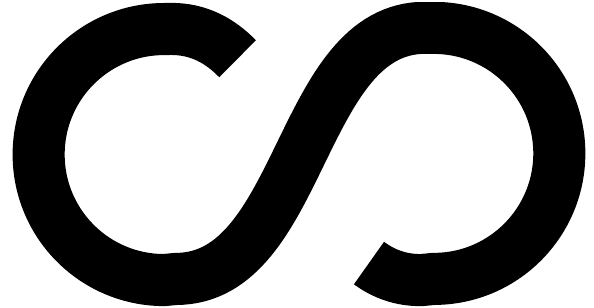}\end{minipage}}
{\begin{minipage}{.10in}\includegraphics[width=.10in]{infty2.pdf}\end{minipage}}
{\begin{minipage}{.08in}\includegraphics[width=.08in]{infty2.pdf}\end{minipage}}
{\begin{minipage}{.08in}\includegraphics[width=.08in]{infty2.pdf}\end{minipage}}
}}
\theoremstyle{plain} 
\newtheorem{thm}{Theorem} 
\newtheorem{cor}[thm]{Corollary} 
\newtheorem{lem}[thm]{Lemma} 
\newtheorem{conj}[thm]{Conjecture} 
\newtheorem{prop}[thm]{Proposition} 
\theoremstyle{definition} 
\newtheorem{defn}[thm]{Definition} 
\newtheorem{rem}[thm]{Remark}
\renewcommand{\int}{\operatorname{int}} 
\newcommand{\Ker}{\operatorname{Ker}} 
\newcommand{\Cok}{\operatorname{Cok}} 
\newcommand{\id}{\operatorname{id}}
\newcommand{\im}{\operatorname{Im}}
\newcommand\W{\text{\sf W}} 
\newcommand\G{\text{\sf G}} 
\newcommand\sL{\text{\sf L}}
\newcommand\sD{\text{\sf D}}
\newcommand{\Z}{\mathbb{Z}} 
\newcommand{\N}{\mathbb{N}}
\newcommand{\bG}{\mathbb{G}} 
\newcommand{\bW}{\mathbb{W}} 
\newcommand{\bL}{\mathbb{L}}
\newcommand{\cT}{\mathcal{T}} 
\newcommand{\cV}{\mathcal{V}} 
\newcommand{\cW}{\mathcal{W}}
\newcommand{\sra}{\twoheadrightarrow}
\begin{document} 

\title{Geometric filtrations of \\classical link concordance} 

\begin{abstract} 
This paper studies \emph{grope} and \emph{Whitney tower} filtrations on the set of concordance classes of classical links in terms of \emph{class} and \emph{order} respectively. Using the tree-valued intersection theory of Whitney towers, the associated graded quotients are shown to be finitely generated abelian groups under a well-defined connected sum operation. 

\emph{Twisted Whitney towers} are also studied, along with a corresponding quadratic enhancement of the intersection theory for framed Whitney towers that measures Whitney-disk framing obstructions. The obstruction theory in the framed setting is strengthened, and 
the relationships between the twisted and framed filtrations
are described in terms of exact sequences which show how higher-order Sato-Levine and higher-order Arf invariants are obstructions to framing a twisted Whitney tower. 

The results from this paper combine with those in \cite{CST2,CST3,CST4} to give a classifications of the filtrations; see our survey \cite{CST0} as well as the end of the introduction below. UPDATE: The results of this paper have been completely subsumed into the paper \emph{Whitney tower concordance of classical links} \cite{WTCCL}. 
\end{abstract}

\author[J. Conant]{James Conant} 
\email{jconant@math.utk.edu} 
\address{Dept. of Mathematics, University of Tennessee, Knoxville, TN} 

\author[R. Schneiderman]{Rob Schneiderman} 
\email{robert.schneiderman@lehman.cuny.edu} 
\address{Dept. of Mathematics and Computer Science, Lehman College, City University of New York, Bronx, NY} 

\author[P. Teichner]{Peter Teichner} 
\email{teichner@mac.com} 
\address{Dept. of Mathematics, University of California, Berkeley, CA and} 
\address{Max-Planck Institut f\"ur Mathematik, Bonn, Germany}

\keywords{Whitney towers, gropes, link 
concordance, trees, higher-order Arf invariants, higher-order Sato-Levine invariants, twisted Whitney disks} 

\maketitle

\section{Introduction}\label{sec:intro} 
Several key theorems and conjectures in low-dimensional topology can 
be stated in terms of certain 2-complexes known as \emph{gropes}. 
These are geometric embodiments of commutators of group 
elements, see e.g.~\cite{Can, COT, CT1,CT2,FQ,FT2,T1, T2}. 
Gropes are built by gluing together surfaces along collections of embedded essential circles, and come in 
several different types with varying measures of complexity. 

The {\em height} of a grope corresponds to the derived series and was used (in the presence of caps) in \cite{FQ,FT2} to formulate the main open problem for topological $4$--manifolds: the 4-dimensional surgery and s-cobordism theorems for arbitrary fundamental groups are equivalent to a certain statement about capped gropes of height $\geq 2$. In the uncapped setting, gropes of increasing height were used in \cite{COT} to define a new filtration of the knot concordance group. 

In this paper, we shall study a more basic measure of the complexity, namely the 
\emph{class} of a grope, which corresponds to commutator length and the lower central series (Figure~\ref{fig:class4gropeA}). In the 3-dimensional setting it was used in \cite{CT1,CT2} to give a geometric interpretation of Vassiliev's finite type invariants of knots and (the first nonvanishing term of) the Kontsevich integral. 


\begin{figure}[h]
\centerline{\includegraphics[scale=.45]{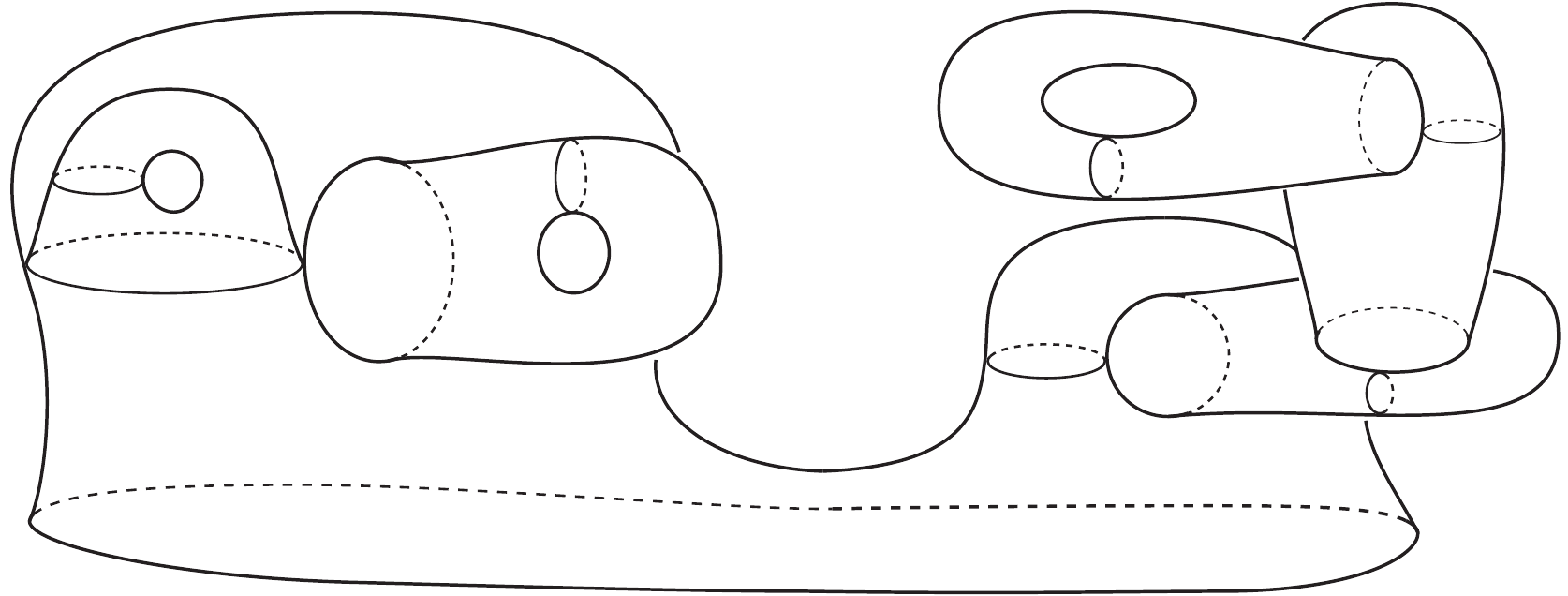}}
         \caption{A class $4$ grope.}
         \label{fig:class4gropeA}
\end{figure}

In the 4-dimensional setting it is a fundamental open problem to determine under what conditions the components of a link in the $3$--sphere bound class $n$ gropes disjointly embedded in the $4$--ball, and this paper will describe a 
program for computing the \\
 \emph{Grope concordance filtration} (by class) 
\begin{equation} \tag{$\bG$} 
\dots \subseteq \bG_{3} \subseteq \bG_{2} \subseteq \bG_{1} \subseteq  \bG_{0} \subseteq  \bL 
\end{equation}
on the set $\bL=\bL(m)$ of framed links in $S^3$ with $m$ components. Here $\bG_n=\bG_n(m)$ is defined to be the set of framed links that bound class~$(n+1)$ framed gropes disjointly embedded in $B^4$. The index shift provides compatibility with other filtrations introduced below and for the same reason, we {\em define} $\bG_0$ to be the set of evenly framed links. 

The intersection of all $\bG_n$ contains all slice links because a 2-disk is a grope of arbitrary large class. In fact, this filtration factors through link concordance; and we shall use this fact implicitly at various places. 

Recall that the connected sum of links is not a well-defined operation because of the choices of connecting bands that are involved. As a consequence, there is no direct definition of the associated graded quotients of the filtration $\bG_n$. However, one can define an equivalence relation on links by using the notion of {\em class~$n$ grope concordance} between two links. This is obtained by using framed gropes built on annuli connecting link components in $S^3\times I$. We then define the {\em associated graded} $\G_n=\G_n(m)$ as the quotient of 
$\bG_n$ modulo grope concordance of class $n+2$. Our first result is the following corollary of Theorem~\ref{thm:R-onto-G} below. 

\begin{cor} \label{cor:gropes} For all $m,n\in\N$, the sets $\G_n(m)$ are finitely generated abelian groups, under a well-defined connected sum $\#$. Moreover, $\G_n$ is the set of framed links $L\in\bG_n$ modulo the relation that $[L_1]=[L_2]\in\G_n$ if and only if $L_1 \# -L_2$ lies in $\bG_{n+1}$, for some choice of connected sum $\#$. Here $-L$ is the mirror image of $L$ with reversed framing. 
\end{cor}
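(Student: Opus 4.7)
The plan is to deduce the structural statements in the corollary from Theorem~\ref{thm:R-onto-G}, which (as the notation suggests) supplies a surjection from a finitely generated abelian group of trees onto $\G_n(m)$. Finite generation and the abelian group laws will then transfer to $\G_n$ as soon as the connected sum $\#$ is shown to descend to it and the target operation is identified with $\#$, so the bulk of the proof is verifying that $\#$ is well-defined on $\G_n$ and establishing the explicit form of the equivalence relation.

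The first task is well-definedness of $\#$ on $\G_n$. Different choices of bands used to form $L_1 \# L_2$ differ by sequences of band slides and local re-routings, each of which can be realized inside a small ball by a grope concordance of arbitrarily high class. Hence any two choices of $L_1 \# L_2$ are grope concordant of class $\geq n+2$ and represent the same element of $\G_n$. Associativity is then automatic; the identity is the class of the unlink (which bounds disjoint disks, i.e.\ gropes of every class); $[-L]$ is the inverse of $[L]$ because $L \# -L$ is slice via the standard doubled-disk construction and so lies in $\bG_k$ for all $k$; and commutativity follows from the fact that the swap $L_1 \# L_2 \leftrightarrow L_2 \# L_1$ can be achieved by sliding components past one another in $S^3 \times I$, where the obstructions to such an isotopy are of order exceeding $n$ since the input links lie in $\bG_n$.

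For the explicit description of the equivalence relation, the forward direction adapts the classical bent-concordance construction: a class $n+2$ grope concordance $G$ from $L_1$ to $L_2$ in $S^3 \times I$ yields, after bending $S^3 \times I$ into $B^4$ and adjoining a connecting band, a class $n+2$ grope in $B^4$ bounded by $L_1 \# -L_2$, so $L_1 \# -L_2 \in \bG_{n+1}$. Conversely, any class $n+2$ grope in $B^4$ bounded by $L_1 \# -L_2$ can be cut along the band and unbent into a class $n+2$ grope concordance from $L_1$ to $L_2$ in $S^3 \times I$. The class is preserved throughout because the modifications only affect an annular neighborhood of the band and do not introduce any new intersections among the surface stages of the grope.

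The main obstacle will be keeping careful control of the class of the gropes under all of these band moves, bendings, and surgeries. In each case one must verify that the modification does not decrease the class, which relies on the tree-valued intersection theory developed earlier in the paper to track how interactions among grope surface stages contribute to the order of the resulting trees. The trickiest step is likely the well-definedness of $\#$: a careful accounting of how two band choices differ, in terms of the tree orders produced by their Bing-doubling-type alteration, is needed to certify that the difference genuinely lies in the next stage $\bG_{n+1}$ of the filtration rather than merely in $\bG_n$.
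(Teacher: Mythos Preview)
Your approach diverges from the paper's in a fundamental way, and the divergence introduces a genuine gap.

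The paper does not argue directly with gropes at all. It invokes the result of \cite{S1} that $\bG_n=\bW_n$ and $\G_n=\W_n$, so Corollary~\ref{cor:gropes} is literally a restatement of Theorem~\ref{thm:R-onto-G} under this identification. Theorem~\ref{thm:R-onto-G} is then proved entirely on the Whitney tower side, where the intersection invariant $\tau_n$ is available: Lemma~\ref{lem:exists-tower-sum} shows that for \emph{any} choice of bands one has $t(\cW^\#)=t(\cW)\amalg t(\cW')$, hence $\tau_n(\cW^\#)=\tau_n(\cW)+\tau_n(\cW')$ independently of $\beta$, and Corollary~\ref{cor:tau=w-concordance} then gives well-definedness in $\W_n$. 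The ``Moreover'' statement is also proved through $\tau_n$ and the order-raising Theorem~\ref{thm:framed-order-raising-on-A}, not by cutting gropes.

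Your attempt to show well-definedness of $\#$ directly on $\G_n$ does not go through. The assertion that two band choices ``differ by sequences of band slides and local re-routings, each of which can be realized inside a small ball by a grope concordance of arbitrarily high class'' is false as stated: two bands can differ by winding around link components in globally distinct ways, and the resulting band sums are in general not even concordant. There is no local move confined to a small ball relating them. This is exactly why the paper passes to Whitney towers and uses the additivity of $\tau_n$ --- that invariant is blind to the band choice for the right reason (the new order-zero disks acquire no new singularities), which is what makes the argument work. Similarly, your converse step for the ``Moreover'' part (``cut along the band and unbend'') is not justified: a class $n+2$ grope in $B^4$ bounded by $L_1\#-L_2$ has no a priori structure near the band allowing such a cut. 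The paper instead glues two $4$--balls along the band region to produce $S^3\times I$ and carries the Whitney tower across.

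In short, the missing ingredient is precisely the Whitney tower intersection theory (or an equivalent invariant on the grope side), without which band-independence of $\#$ cannot be established.
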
 

For example, $\G_0(m) \cong\Z^k$ where $k=m(m+1)/2$ is the number of possible linking numbers and framings (on the diagonal) of a link with $m$ components. This follows from the fact that disjointly embedded class 2 gropes in $B^4$ are framed surfaces which induce the zero framings on their boundary and show the vanishing of all linking numbers.  We also note that $\G_1(1) \cong \Z_2$ is given by the Arf invariant, and that $\bG_2(1) =\bG_n(1)$ for all $n\geq 2$, by \cite{S2}. 

The proof of Corollary~\ref{cor:gropes} can be most succinctly formulated by defining a sequence of finitely generated abelian groups $\cT_n=\cT_n(m)$ in terms of certain trees. These groups have previously appeared in the study of graph cohomology, Feynman diagrams and finite type invariants of links and $3$--manifolds. 

\begin{defn}\label{def:Tau}\cite{ST2}
In this paper, a {\em tree} will always refer to an oriented unitrivalent tree, where the {\em orientation} of a tree is given by cyclic orientations at all trivalent vertices. The \emph{order} of a tree is the number of trivalent vertices.
Univalent vertices will usually be labeled from the set $\{1,2,3,\ldots,m\}$ corresponding to the link components, and we consider trees up to isomorphisms preserving these labelings.
We define $\cT=\cT(m)$ to be the free abelian group on such trees, modulo the antisymmetry (AS) and Jacobi (IHX) relations shown in Figure~\ref{fig:ASandIHXtree-relations}. 
\begin{figure}[h]
\centerline{\includegraphics[scale=.65]{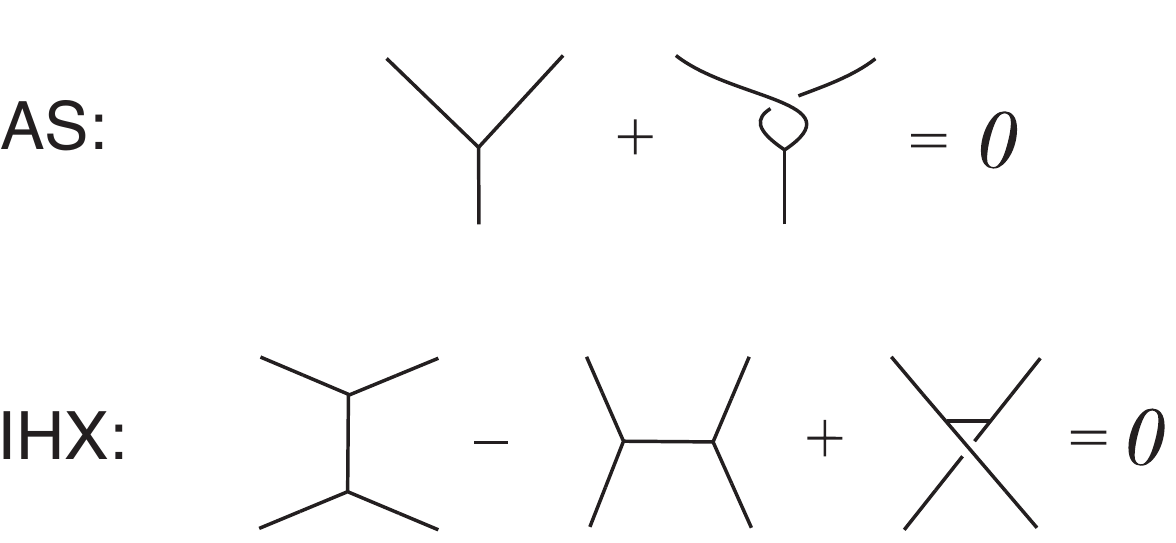}}
         \caption{Local pictures of the \emph{antisymmetry} (AS) and \emph{Jacobi} (IHX) relations
         in $\cT$. Here all trivalent orientations are induced from a fixed orientation of the plane, and univalent vertices possibly extend to subtrees which are fixed in each equation.}
         \label{fig:ASandIHXtree-relations}
\end{figure}
Since the AS and IHX relations are homogeneous with respect to order, $\cT$ inherits a grading $\cT=\oplus_n\cT_n$, where $\cT_n=\cT_n(m)$ is the free abelian group on order $n$ trees, modulo AS and IHX relations.
\end{defn}

Then a construction similarl to Cochran's iterated Bing-doubling construction for realizing Milnor invariants \cite{C} (and equivalent to `simple clasper surgery along trees' in the sense of Goussarov and Habiro)
leads to our {\em realization map} 
\[ 
R_n : \cT_n \to \G_n 
\] 
and Theorem~\ref{thm:R-onto-G} says that this map is well-defined and onto. For example, if $ i\neq j$ then $R_0$ sends the 
order zero tree $i -\!\!\!-\!\!\!-\!\!\!- \,j $ to a disjoint union of an $(m-2)$-component unlink and a Hopf link with components 
numbered $i$ and $j$, all components being zero-framed. If $i=j$, we get an $m$-component unlink where the $i$th component has framing 2 and all other components are zero-framed. 

In fact, it will turn out that $\G_n$ is \emph{isomorphic} to $\cT_n$ when $n$ is even (and to a quotient by certain $2$-torsion in the odd case) via a map that is essentially defined by forgetting the roots of the rooted trees which correspond to iterated commutators determined by the grope branches.

The graded groups $\G_n$ are reminiscent of the groups of knots arising in the theory of finite type invariants, first discovered by Goussarov \cite{Gu1}, defined as a quotient of the monoid of knots by $n$-equivalence. These groups were later constructed more conceptually using claspers by both Habiro and Goussarov \cite{H, Gu2}. For the case of string links with $m$-strands, in \cite{CST} we defined $G_n(m)$ to be the monoid of $m$-string links modulo ($3$-dimensional) grope cobordism of class $n+1$, and the associated graded  quotients are finitely generated abelian groups.
As in the current paper, we defined a surjective realization map $$\Phi\colon \mathcal B^g_n(m)\to G_n(m)/G_{n+1}(m)$$
where $\mathcal B^g_n(m)$ is the group of Feynman diagrams with grope degree $n$, and showed that rationally it is an isomorphism. 
Clearly there is a surjection $\rho\colon G_n(m)/G_{n+1}(m)\twoheadrightarrow {\sf G}_n(m)$, and indeed our realization maps $R_n$ are the compositions $\rho\circ\Phi$, since graphs with loops give rise to null-concordant links and are therefore in the kernel of $\rho\circ\Phi$. (See also Remark~\ref{rem:finite-type-IHX} below.)

Although we could take the composition $\rho\circ\Phi$  as our definition of $R_n$, and prove the required properties
by modifying the $3$-dimensional methods developed for knots by the first and third authors in \cite{CT1,CT2},  we take a different approach here which is directly 4-dimensional and significantly simpler.  

\subsection{The Whitney tower filtration}\label{subsec:W-tower-filtration}
We consider a second filtration on the set of framed links with $m$ components, defined by using (framed) {\em Whitney towers} in place of (framed) gropes. Just like gropes are iterated surface stages glued to each other in a specified way, a Whitney tower is constructed by adding layers of immersed framed Whitney disks that pair up the intersection points of lower stages 
(Figure~\ref{fig:order3Whitneytower-and-withTrees} and Section~\ref{sec:w-towers}). 

Whitney towers also come with a measure of complexity, the {\em order}, which determines how often intersection points are paired up. For example, an order zero Whitney tower is just a union of framed immersed disks in $B^4$ bounded by a link in $S^3$ with the induced framing, and an order~$1$ Whitney tower would also have all intersections among the immersed disks paired by Whitney disks. We define $\bW_n=\bW_n(m)$ to be the set of 
framed $m$-component links that bound a Whitney tower of order~$n$ in $B^4$. On the set of framed links, we thus get the\\
{\em Whitney tower filtration} (by order)
\begin{equation} \tag{$\bW$} 
\dots \subseteq \bW_{3} \subseteq \bW_{2} \subseteq \bW_{1} \subseteq  \bW_{0} \subseteq \bL =\bL(m)
\end{equation}
\begin{figure}[h]
\centerline{\includegraphics[scale=.45]{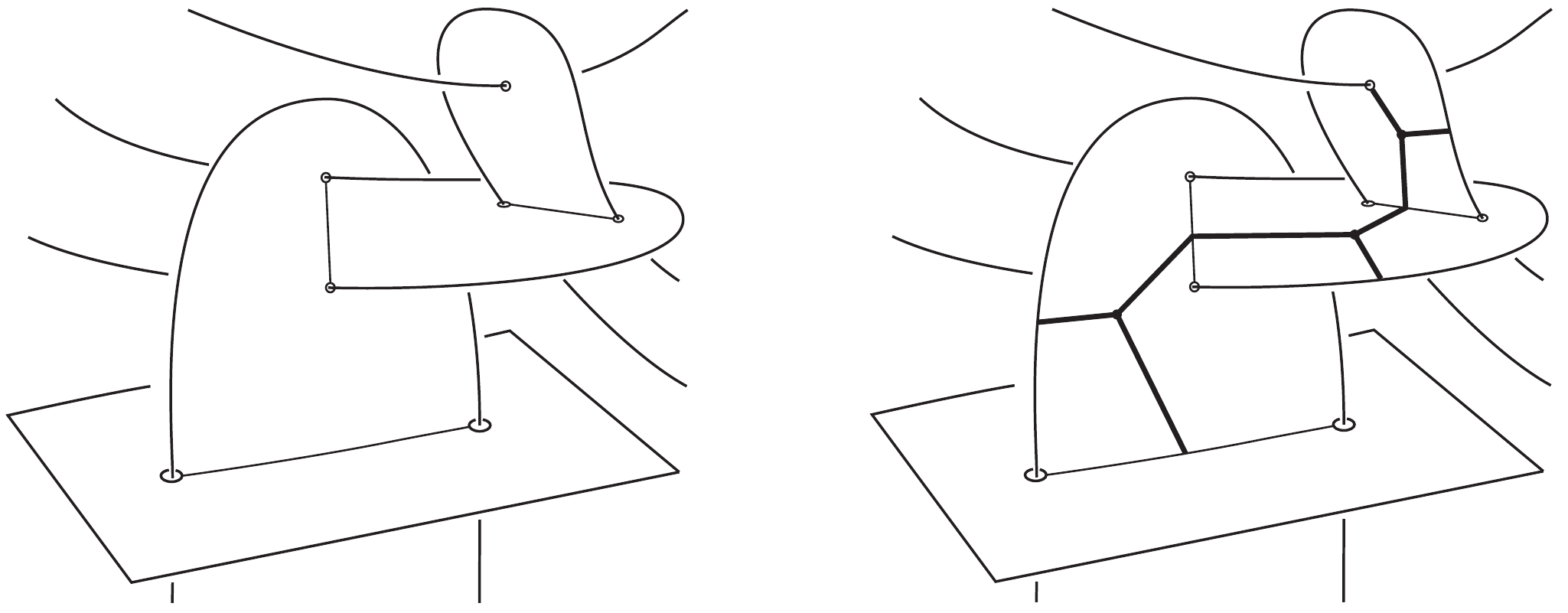}}
         \caption{Part of a Whitney tower in $4$--space (left), and shown with part of an associated tree (right). 
         Whitney disks pair up transverse intersections of opposite sign.}
         \label{fig:order3Whitneytower-and-withTrees}
\end{figure}

As for gropes above, the equivalence relation of order $n+1$ {\em Whitney tower concordance} (see Section~\ref{sec:realization-maps}) then leads to the `associated graded' $\W_n=\W_n(m)$. 
\begin{thm} \label{thm:R-onto-G} 
For each $m,n$, there are surjective realization maps 
\[ 
R_n=R_n(m) : \cT_n(m) \sra\W_n(m)
\] 
and the sets $\W_n(m)$ are finitely generated abelian groups under the well-defined operation of connected sum $\#$. Moreover, $\W_n$ is the set of framed links $L\in\bW_n$ modulo the relation that $[L_1]=[L_2]\in\W_n$ if and only if $L_1 \# -L_2$ lies in $\bW_{n+1}$, for some choice of connected sum $\#$. Here $-L$ is the mirror image of $L$ with reversed framing. 
\end{thm}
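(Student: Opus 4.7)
The plan is to realize trees as explicit links bounding Whitney towers via iterated Bing-doubling, and then use a top-order tree-valued intersection invariant to deduce both surjectivity of $R_n$ and the claimed group structure on $\W_n(m)$.

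First I would construct, for each order $n$ tree $t$ with univalent vertices labeled in $\{1,\ldots,m\}$, a framed link $R_n(t)$ bounding an explicit order $n$ Whitney tower $\cW(t) \subset B^4$ whose unique unpaired order $n$ intersection has associated tree $t$. The recipe is Cochran's iterated Bing-doubling construction mentioned in the introduction (equivalently, simple clasper surgery along a clasper with underlying tree $t$): starting from the order $0$ models described in the excerpt, iteratively Bing-double strands at the branchings corresponding to trivalent vertices of $t$. Setting $R_n(t) := [\partial \cW(t)] \in \W_n(m)$ and extending linearly yields a homomorphism from the free abelian group on labeled oriented trees.

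Next I would check that $R_n$ descends through the AS and IHX relations. The antisymmetry relation follows directly, since reversing the cyclic order at a trivalent vertex is realized geometrically by swapping the two sheets at the corresponding Whitney disk, producing a sign change on the boundary link that is compatible with the mirror-with-reversed-framing identification. The Jacobi relation is the main obstacle, and I would establish it by a geometric IHX construction: localizing near the relevant trivalent vertex, I would build the three I--, H--, and X-models from a common starting picture via a controlled sequence of finger moves and Whitney moves, each of which alters the bounding link only by a link in $\bW_{n+1}$. In practice one first puts the model tower into a split form where each Whitney disk is supported in a disjoint ball, and then performs the cut-and-paste on the three branches meeting at one chosen trivalent vertex.

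Surjectivity and the group structure I would handle simultaneously using the top-order intersection invariant. Given $L \in \bW_n$ bounding an order $n$ Whitney tower $\cW$, define $\tau_n(\cW) \in \cT_n(m)$ as the signed sum of trees associated to the unpaired order $n$ intersections of $\cW$. I would then show that $L \#\bigl(-R_n(\tau_n(\cW))\bigr)$ bounds an order $(n+1)$ Whitney tower in $B^4$, by pairing the order $n$ intersections of $\cW$ against those of the standard model tower using Whitney disks supported in a neighborhood of the connecting bands. This simultaneously yields surjectivity of $R_n$ (take $t = \tau_n(\cW)$) and independence of $\#$ from the choice of bands modulo $\bW_{n+1}$: any two band choices produce links that agree outside a ball, and the tree-valued obstruction to an order $(n+1)$ Whitney tower filling that ball vanishes on order grounds. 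Commutativity of $\#$ is by ambient isotopy, and inverses are supplied by $-L$ since the standard product concordance from $L \sqcup -L$ to the unlink carries an arbitrarily high order Whitney tower after tubing. The kernel characterization $[L_1]=[L_2] \Longleftrightarrow L_1 \# -L_2 \in \bW_{n+1}$ then follows formally from the group structure together with the preceding construction.
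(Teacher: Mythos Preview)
Your overall architecture matches the paper's: realize trees by iterated Bing-doubling, and use the order $n$ intersection invariant $\tau_n$ to control everything. But the logical flow is inverted at the key step, and this creates a genuine gap.

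The paper does \emph{not} verify directly that $R_n$ kills IHX relators by manipulating the boundary links. Instead, well-definedness of $R_n$, band-independence of $\#$, and surjectivity all flow from a single criterion (Corollary~\ref{cor:tau=w-concordance}): two links in $\bW_n$ represent the same class in $\W_n$ if and only if they bound order $n$ Whitney towers with equal $\tau_n \in \cT_n$. This in turn rests on the order-raising Theorem~\ref{thm:raise}: if $\tau_n(\cW)=0$ then the order zero surfaces are regularly homotopic rel boundary to surfaces supporting an order $n+1$ tower. That theorem is where the geometric IHX construction actually lives, and it is substantial: after using IHX moves to arrange that all order $n$ trees in $t(\cW)$ occur in algebraically canceling pairs, one must still convert algebraic cancellation (isomorphic trees, opposite signs, possibly sitting in \emph{different} Whitney disks) into geometric cancellation (same pair of Whitney disks), which requires the tree-simplification and transfer moves of \cite{S1,ST2}.

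Your proposal bypasses this and asserts that after band-summing $\cW$ with the mirror of the model tower, the order $n$ intersections can be ``paired using Whitney disks supported in a neighborhood of the connecting bands.'' That does not work: the unpaired intersections of $\cW$ and of the model tower lie in disjoint $4$--balls far from the bands; the band sum only joins the order zero disks along thin strips and creates no new singularities (this is the content of Lemma~\ref{lem:exists-tower-sum}). What you actually obtain is a single tower whose $\tau_n$ vanishes \emph{in $\cT_n$}, and turning that into an order $n+1$ tower is precisely Theorem~\ref{thm:raise}. Your band-independence argument has the same problem---two band choices do not ``agree outside a ball'' in any useful sense, since the bands may wind arbitrarily through the connect-sum region; the paper instead observes that $\tau_n(\cW^\#)=\tau_n(\cW)+\tau_n(\cW')$ regardless of $\beta$ and applies Corollary~\ref{cor:tau=w-concordance}. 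Finally, your direct IHX check is confused as written: finger moves and Whitney moves in $B^4$ are homotopies rel boundary and do not alter the link at all, so they cannot ``alter the bounding link by a link in $\bW_{n+1}$.'' All three issues dissolve once you route everything through $\tau_n$ and invoke Theorem~\ref{thm:raise} as the single nontrivial input.
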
 

Note that $\bW_0$ consists of links that are evenly framed because a component has even framing if and only if it bounds a framed immersed disk in $B^4$. It is well known that the signed number of self-intersection points of such a framed disk equals twice the framing on the boundary. As a consequence, if a knot bounds the next type of Whitney tower (of order one) then all the intersections of the first stage (order zero) disk are paired up by Whitney disks and so the framing is zero. In fact, $\W_0(1) \cong 2\Z$, given by the framing; and $\W_1(1) \cong\Z_2$, detected by the Arf invariant.

The resemblance between the two realization maps above is no coincidence: It is a result of the second author in \cite{S1} that $\bG_n=\bW_n$ and $\G_n=\W_n$ by completely geometric constructions that lead from gropes to Whitney towers and back (Figure~\ref{fig:wtower-grope}).
\begin{figure}[ht!]
\centerline{\includegraphics[scale=.45]{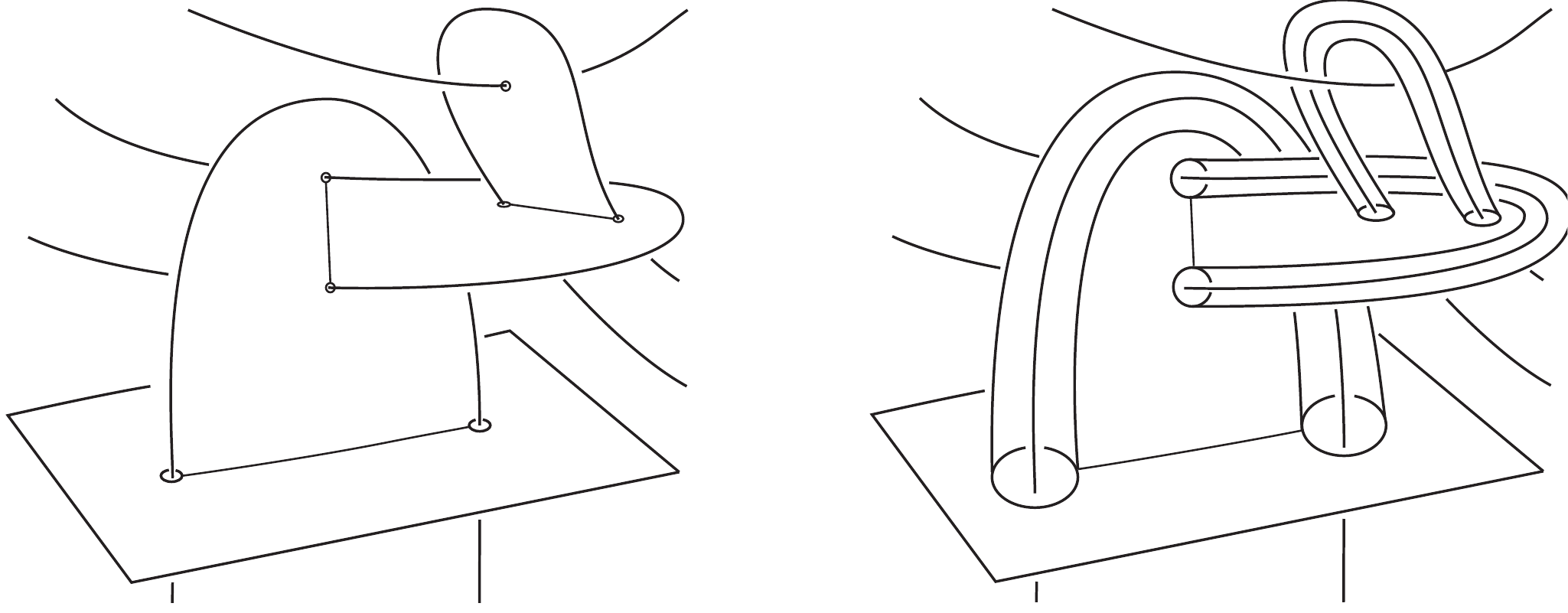}}
         \caption{Controlled conversions between order~$n$ Whitney towers and class~$(n+1)$ gropes are described in detail in \cite{S1}.}
         \label{fig:wtower-grope}
\end{figure}
 This means that we can continue to study either filtration and in this paper, we decide to take the Whitney tower point of view. This choice is motivated by the fact that 
Whitney towers come with an obstruction theory which is accompanied by higher-order intersection invariants taking values in $\cT$. As described in Section~\ref{sec:w-towers} and hinted at in the right side of 
Figure~\ref{fig:order3Whitneytower-and-withTrees}, any Whitney tower $\cW$ of order~$n$ has an 
order $n$ intersection invariant 
$\tau_n(\cW)\in\cT_n$ which is defined by summing the trees pictured in Figure~\ref{fig:order3Whitneytower-and-withTrees}.  Order $n$ intersection invariants vanish on Whitney towers of order $n+1$, and
the key step in the proof of Theorem~\ref{thm:R-onto-G} (given in Section~\ref{sec:realization-maps}) is the following ``raising the order'' result: 

\begin{thm}[\cite{CST,S1,ST2}]\label{thm:raise} 
A link bounds a Whitney tower $\cW$ of order~$n$ with $\tau_n(\cW)=0$, 
if and only if it bounds a Whitney tower of order $n+1$. 
\end{thm}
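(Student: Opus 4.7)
The forward implication is immediate from the definitions: an order-$(n+1)$ Whitney tower is in particular an order-$n$ Whitney tower in which every unpaired intersection has order at least $n+1$, so no order-$n$ tree contributes to the intersection invariant and $\tau_n=0$.

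For the converse, suppose $L$ bounds an order-$n$ Whitney tower $\cW$ with $\tau_n(\cW)=0\in\cT_n$. The plan is to modify $\cW$ rel boundary via controlled local homotopies until every unpaired order-$n$ intersection has been paired by a framed Whitney disk, giving an order-$(n+1)$ Whitney tower on $L$. The reason the obstruction lives in the quotient group $\cT_n$, rather than in the free abelian group on trees, is that both AS and IHX relations can be realized geometrically. I would establish three geometric ingredients in sequence. First, \emph{splitting}: any order-$n$ Whitney tower can be made split, with each Whitney disk carrying at most one unpaired intersection, by repeatedly subdividing along arcs that separate pairs of intersections, so that after splitting, $\tau_n(\cW)$ is literally the signed sum of trees over the unpaired order-$n$ intersection points. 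Second, \emph{geometric AS}: a suitable local modification of a Whitney disk (such as reorientation combined with a boundary twist) produces a Whitney tower whose new tree differs from a given one by an AS sign at one trivalent vertex, so pairs of AS-related trees can be introduced or cancelled geometrically. Third, \emph{geometric IHX}: inside a standard neighborhood of three sheets meeting in general position, one builds a Whitney tower on an unlink whose order-$n$ intersection invariant is an IHX relator with correct signs and framings, and tubing this model into $\cW$ near any trivalent vertex of an existing tree realizes the Jacobi relation as a geometric modification without changing $\partial\cW$.

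The main obstacle is the geometric IHX construction: one must choose finger moves and new Whitney disks so that the three trees $I$, $H$, $X$ appear simultaneously with matching signs and so that every intermediate pairing admits a framed Whitney disk. Producing the model locally in $B^4$, verifying its boundary is the unlink, and tracking framings through the construction is precisely where the work in \cite{CST,S1,ST2} is concentrated. Once these three ingredients are in place, the converse follows: since $\tau_n(\cW)$ vanishes in $\cT_n$, it is a finite combination of AS and IHX relators, and applying the geometric versions above produces an order-$n$ Whitney tower on $L$ whose unpaired order-$n$ intersections occur in pairs of opposite sign sharing the same sheet pairings. Each such pair is paired by a framed Whitney disk---the framing obstruction on the pairing disk vanishes because the two signs cancel---yielding the desired order-$(n+1)$ Whitney tower bounded by $L$.
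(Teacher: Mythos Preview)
Your outline has the right shape and matches the paper's strategy up to a point: split the tower, realize AS by reorienting Whitney disks, realize IHX geometrically using the $4$--dimensional construction of \cite{CST}, and thereby arrange that the order-$n$ trees in $t(\cW)$ occur in algebraically canceling pairs. (A minor correction: AS is realized purely by reorienting Whitney disks, with no boundary twist needed; boundary twists change framings, not tree orientations.)

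The genuine gap is in your final step. After realizing the relators you obtain \emph{algebraically} canceling pairs: two order-$n$ intersections $p,p'$ with $t_p=t_{p'}$ and $\epsilon_p=-\epsilon_{p'}$. But ``same tree'' does \emph{not} mean ``same sheet pairings'': the trivalent vertices of $t_p$ and $t_{p'}$ may sit in entirely different Whitney disks of $\cW$, so there is no reason $p$ and $p'$ lie in a common $W_I\cap W_J$, and hence no Whitney disk pairing them exists a priori. The paper (following \cite{S1,ST2}) inserts two further nontrivial steps you have omitted: first, use the Whitney-move IHX construction to replace each algebraically canceling pair by many such pairs whose trees are \emph{simple} (right- or left-normed); second, for simple trees one can perform controlled homotopies (sliding boundary arcs and ``transferring'' intersections as in \cite[4.5,\,4.8]{ST2}) that migrate $p'$ through the tower until it lands in the same pair of Whitney disks as $p$. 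Only then does a pairing Whitney disk $W_{(I,J)}$ exist. Your sentence ``the framing obstruction on the pairing disk vanishes because the two signs cancel'' is also not correct: the twisting $\omega(W_{(I,J)})$ is unrelated to $\epsilon_p+\epsilon_{p'}$; rather, any nonzero twisting is removed by boundary-twisting $W_{(I,J)}$ into one of its supporting sheets, which only creates intersections of order $\geq n+1$ and so does not obstruct reaching order $n+1$.
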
 

This theorem follows from realizing all relations in $\cT_n$ geometrically by controlled maneuvers on Whitney towers. It was a very pleasant surprise that, say, the Jacobi identity (or IHX relation) has an incarnation as a certain sequence of moves on  Whitney disks and their boundary-arcs in a Whitney tower \cite{CST}. It has the following important corollary whose proof is in Section~\ref{def:wtc}.

\begin{cor}\label{cor:tau=w-concordance}
Links $L_0$ and $L_1$ represent the same element of $\W_n$
if and only if there exist order $n$ Whitney towers $\cW_i$ in $B^4$ with $\partial\cW_i=L_i$ and $\tau_n(\cW_0)=\tau_n(\cW_1)\in\cT_n$.
\end{cor}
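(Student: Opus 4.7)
The plan is to prove the two implications separately, using Theorems~\ref{thm:R-onto-G} and~\ref{thm:raise} together with the behavior of the intersection invariant $\tau_n$ under boundary-connected-sum, mirror reflection, and concatenation along a Whitney tower concordance.

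For the direction $(\Leftarrow)$, suppose one is given order $n$ Whitney towers $\cW_i$ on $L_i$ with $\tau_n(\cW_0)=\tau_n(\cW_1)$. The mirror image $-\cW_1$ is an order $n$ Whitney tower on $-L_1$ satisfying $\tau_n(-\cW_1)=-\tau_n(\cW_1)$, since reflection flips the signs of transverse intersection points. Performing a boundary-connected-sum of the two $4$-balls produces a Whitney tower on $L_0\#-L_1$ of order $n$ whose intersection invariant is $\tau_n(\cW_0)-\tau_n(\cW_1)=0$. By Theorem~\ref{thm:raise} this tower can be promoted to an order $n+1$ Whitney tower, showing $L_0\#-L_1\in\bW_{n+1}$, and then Theorem~\ref{thm:R-onto-G} gives $[L_0]=[L_1]\in\W_n$.

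For the direction $(\Rightarrow)$, suppose $[L_0]=[L_1]\in\W_n$, so by definition there is an order $n+1$ Whitney tower concordance $C\subset S^3\times I$ between $L_0$ and $L_1$. Choose any order $n$ Whitney tower $\cW_0$ on $L_0$ in $B^4$ and glue $B^4$ to $S^3\times I$ along the boundary component containing $L_0$; the result is again diffeomorphic to $B^4$, with $L_1$ on its boundary. The union $\cW_0\cup C$ is then a Whitney tower on $L_1$ in this ball, obtained by extending the order zero disks of $\cW_0$ along the concordance annuli of $C$. Because $C$ has order $n+1$ it contributes no unpaired intersections of order $\leq n$, so the combined tower remains of order $n$ and $\tau_n(\cW_0\cup C)=\tau_n(\cW_0)$. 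Taking $\cW_1:=\cW_0\cup C$ finishes this direction.

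The main obstacle is the careful verification that $\tau_n$ behaves additively under these geometric operations, most delicately under the mirror reflection used in $(\Leftarrow)$, where one must track both the sign changes at double points and the induced orientations on the trees summed to compute $\tau_n$. These properties should follow routinely from the sign and orientation conventions established in Section~\ref{sec:w-towers}, but since the whole argument is built on them they need to be checked once with care.
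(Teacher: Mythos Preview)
Your $(\Rightarrow)$ direction matches the paper's argument exactly: glue an order $n+1$ Whitney tower concordance onto a given order $n$ tower and observe that $\tau_n$ is unchanged.

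Your $(\Leftarrow)$ direction, however, has a circularity problem. You invoke the ``Moreover'' clause of Theorem~\ref{thm:R-onto-G} to pass from $L_0\#-L_1\in\bW_{n+1}$ to $[L_0]=[L_1]\in\W_n$. But in the paper's logical order, Corollary~\ref{cor:tau=w-concordance} is proved \emph{first} and is then used as the main ingredient in the proof of Theorem~\ref{thm:R-onto-G} (including the ``Moreover'' clause). So as written you are appealing to a theorem whose proof depends on the very corollary you are trying to establish. It is true that the particular implication you need---from $L_0\#-L_1\in\bW_{n+1}$ to $[L_0]=[L_1]$---can be proved independently of Corollary~\ref{cor:tau=w-concordance} (by cutting the $4$--ball open along a $3$--ball separating $L_0$ from $-L_1$ to produce a Whitney tower concordance in $S^3\times I$), but then you should give that argument directly rather than cite Theorem~\ref{thm:R-onto-G}.

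The paper avoids this detour entirely. For $(\Leftarrow)$ it forms $S^3\times I$ as the connected sum of the two $4$--balls containing $\cW_0$ and $\cW_1$, tubes the order zero disks together to get properly immersed annuli, and takes the union of all Whitney disks. This yields an order $n$ Whitney tower $\cV$ on annuli with $\tau_n(\cV)=\tau_n(\cW_0)-\tau_n(\cW_1)=0$ (the minus sign coming from the orientation reversal on one side), and then the annulus version of the order-raising theorem (Theorem~\ref{thm:framed-order-raising-on-A}) promotes $\cV$ to order $n+1$, giving the Whitney tower concordance directly. This is cleaner: it stays entirely inside $S^3\times I$, needs no band sum, and avoids any reference to Theorem~\ref{thm:R-onto-G}.
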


Our realization map $R_n$ (and its surjectivity) can then be explained extremely well by the following commutative diagram:
\begin{equation}
\xymatrix{
\mathfrak{W}_n\ar@{->>}[d]_{\tau_n}\ar@{->>}[r]^{\partial} & \bW_n \ar@{->>}[d]\\
\cT_n \ar@{->>}[r]^{R_n} &\W_n
}
\end{equation}
An element in the set $\mathfrak{W}_n$ is an order $n$ Whitney tower $\cW$ in $B^4$ whose boundary $\partial \cW$ lies in $\bW_n$ by definition of our filtration. The map $R_n$ then arises directly from above the diagram via Corollary~\ref{cor:tau=w-concordance} and the surjectivity of the intersection invariant $\tau_n$.
 The proof of Theorem~\ref{thm:R-onto-G} will be completed via a Bing doubling (and internal band sum) construction applied to the Hopf link, showing that $\tau_n$ is surjective.

As a consequence of Theorem~\ref{thm:R-onto-G} together with results in \cite{CST2,CST3,CST4}, it turns out that
$\tau_{2n}(\cW)$ only depends on the concordance class of the link $L=\partial\cW$, and not the Whitney tower $\cW$ it bounds; and that the intersection invariants  $\tau_{2n}$ induce inverses to the realization maps $R_{2n}$.
In particular, we have following classification theorem for even orders: 
\begin{thm}[\cite{CST4}] \label{thm:even} 
The maps $R_{2n}:\cT_{2n}\sra \W_{2n}$ are isomorphisms.
\end{thm}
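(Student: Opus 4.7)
Surjectivity is already Theorem~\ref{thm:R-onto-G}, so the task reduces to injectivity, which I propose to prove by constructing a one-sided inverse $\sigma_{2n}\colon \W_{2n}\to\cT_{2n}$. On a representative $L\in\bW_{2n}$, define $\sigma_{2n}([L])=\tau_{2n}(\cW)$ for any choice of order $2n$ Whitney tower $\cW\subset B^4$ with $\partial\cW=L$. The realization map $R_{2n}$ is, by its construction in Theorem~\ref{thm:R-onto-G}, set up so that a tree $t\in\cT_{2n}$ is realized via a Whitney tower whose intersection invariant is exactly $t$, whence $\sigma_{2n}\circ R_{2n}=\id_{\cT_{2n}}$ is automatic as soon as $\sigma_{2n}$ is proven well-defined. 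This immediately yields injectivity of $R_{2n}$.

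All the substance of the proof therefore lies in well-definedness of $\sigma_{2n}$, which reduces via Corollary~\ref{cor:tau=w-concordance} and the additivity of $\tau_{2n}$ under connected sum of Whitney towers to a single \emph{concordance invariance} statement: $\tau_{2n}(\cW)$ should depend only on the concordance class of $L=\partial\cW$. I plan to prove this by a closed-up gluing argument. Given order $2n$ Whitney towers $\cW_0,\cW_1$ bounded by concordant links $L_0,L_1$ via an annulus concordance $C\subset S^3\times I$, assemble $\cW_0$, $C$, and the mirror of $\cW_1$ into a closed order $2n$ Whitney tower $\widehat\cW$ on an immersed system of $2$-spheres in $S^4=B^4\cup_{S^3}(S^3\times I)\cup_{S^3}\overline{B^4}$. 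A direct count of unpaired intersections shows that the closed tree-valued invariant of $\widehat\cW$ equals $\tau_{2n}(\cW_0)-\tau_{2n}(\cW_1)$ in $\cT_{2n}$. The special case of the trivial concordance handles independence of the Whitney tower for fixed $L$. Thus the whole problem reduces to proving that every closed order $2n$ Whitney tower in $S^4$ has trivial intersection invariant in $\cT_{2n}$.

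The main obstacle is precisely this closed vanishing in the even-order case, and it is the step that genuinely requires $n$ to be even. The analogous statement in odd order is false: its failure is classified by the higher-order Sato-Levine and higher-order Arf invariants that are the focus of the framed/twisted exact sequences developed elsewhere in this paper and in \cite{CST2,CST3}. To extract the vanishing, the plan is to combine (a) the twisted Whitney tower obstruction theory from this paper, which pins down the possible framing defects introduced by the gluing across $S^3\times\{0,1\}$ and forces them into odd-order tree classes, with (b) an involution/parity argument exploiting reflection across the equatorial $S^3$ of $S^4$, which in even order forces the closed invariant to equal its own negative modulo the AS and IHX relations. Arranging these simultaneously requires careful geometric control of $\widehat\cW$ --- normalizing framings while achieving reflection symmetry of the tree decomposition --- and this is the technical work carried out in \cite{CST4}.
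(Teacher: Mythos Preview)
Your reduction of injectivity to the well-definedness of $\sigma_{2n}$, and thence to the vanishing of $\tau_{2n}$ on every closed order~$2n$ Whitney tower in $S^4$, is a correct reformulation. The gap is in your proposed proof of that closed vanishing. The ``involution/parity'' argument does not work: the closed tower $\widehat\cW$ you assemble from $\cW_0$, a concordance $C$, and the mirror of $\cW_1$ carries no reflection symmetry whatsoever, since $\cW_0$ and $\cW_1$ are unrelated Whitney towers in independent $4$--balls. Even if one could symmetrize, a reflection would at best give $\tau_{2n}(\widehat\cW)=-\tau_{2n}(\widehat\cW)$, hence only $2\cdot\tau_{2n}(\widehat\cW)=0$; this does not yield vanishing in $\cT_{2n}$. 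There is no geometric parity trick in $S^4$ that separates even from odd order here, and your final sentence attributing such an argument to \cite{CST4} is not accurate.

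The route actually taken in the paper series (summarized in the introduction and in the ``Summary of all papers'' subsection) is entirely different and does \emph{not} attempt to show directly that $\tau_{2n}$ descends. Instead one uses the Milnor invariants --- which are \emph{a priori} link concordance invariants --- to produce an epimorphism $\mu_n:\W^\iinfty_n\twoheadrightarrow\sD_n$ (this is \cite{CST2}). Via grope duality the composition $\eta_n=\mu_n\circ R^\iinfty_n:\cT^\iinfty_n\to\sD_n$ is identified with the purely combinatorial ``sum over all choices of root'' map. Injectivity of $R_{2n}$ then reduces to the injectivity of $\eta_{2n}$ on $\cT_{2n}\subset\cT^\iinfty_{2n}$, an algebraic statement deduced from the Levine Conjecture $\cT_n\cong\sD'_n$, which is proved in \cite{CST3} by combinatorial Morse theory on the space of trees, together with the exact sequences assembled in \cite{CST4}. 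The special role of even order enters algebraically, through the comparison of the quasi-Lie bracket kernel $\sD'_{2n}$ with the Lie bracket kernel $\sD_{2n}$, not through any geometric symmetry of a closed Whitney tower.
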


To describe the situation in odd orders, we first introduce the {\em reduced} version $\widetilde\cT_{2n-1}$ of $\cT_{2n-1}$ by dividing out the {\em framing relations}. These relations are the images of homomorphisms 
\[ 
\Delta_{2n-1} : \Z_2 \otimes \cT_{n-1} \to \cT_{2n-1} 
\] 
defined by sending an order $n-1$ tree $t$ to the sum of trees gotten by doubling the subtree adjacent to each univalent vertex of $t$ (see Figure~\ref{fig:Delta trees} and the precise definition in Section~\ref{sec:proof-thm-odd}).   

\begin{thm} \label{thm:odd} 
The odd order realization maps $R_{2n-1}$ vanish on the image of $\Delta_{2n-1}$ and give surjections 
\[ 
\widetilde R_{2n-1} : \widetilde\cT_{2n-1} \sra\W_{2n-1} 
\] 
Moreover, if a link $L$  bounds a Whitney tower $\cW$ of order~$2n-1$ with $\tau_{2n-1}(\cW) \in \im(\Delta_{2n-1})$ then $L$ bounds a Whitney tower of order $2n$. 
\end{thm}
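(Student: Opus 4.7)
First, I would observe that the third conclusion of the theorem implies the first two. Given an order $n{-}1$ tree $t$, Theorem~\ref{thm:R-onto-G} together with Corollary~\ref{cor:tau=w-concordance} presents $R_{2n-1}(\Delta_{2n-1}(t))$ as the class $[L]\in\W_{2n-1}$ of some link $L$ bounding a framed order $(2n{-}1)$ Whitney tower $\cW$ with $\tau_{2n-1}(\cW)=\Delta_{2n-1}(t)\in\im(\Delta_{2n-1})$. Granting the third conclusion, $L\in\bW_{2n}$, so $[L]=0$ in $\W_{2n-1}$ by the characterization of $\W_{2n-1}$ given in Theorem~\ref{thm:R-onto-G}. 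Thus $R_{2n-1}$ vanishes on $\im(\Delta_{2n-1})$ and descends to a well-defined $\widetilde R_{2n-1}$, whose surjectivity is inherited directly from that of $R_{2n-1}$. Everything therefore reduces to the third conclusion.

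The plan for the third conclusion is to modify the given $\cW$ into a new framed order $(2n{-}1)$ Whitney tower $\cW'$ with $\partial\cW'=L$ and $\tau_{2n-1}(\cW')=0$; Theorem~\ref{thm:raise} then upgrades $\cW'$ to a framed order $2n$ Whitney tower, as required. The key geometric input is the \emph{boundary-twist} move on framed Whitney disks (part of the framed/twisted calculus of Section~\ref{sec:w-towers}, cf.~\cite{CST2,ST2}): twisting a framed Whitney disk $W$ with rooted tree $J$ of order $n{-}1$ destroys its framing by $\pm 1$, and the defect can be pushed off into a transverse self-intersection whose associated unrooted tree is a doubled summand of $\Delta_{2n-1}(t)$, where $t$ is the unrooted version of $J$. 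Summed over a coordinated family of such twists (one for each rooting of $t$), these contributions add up to the full symmetric sum $\Delta_{2n-1}(t)\in\cT_{2n-1}$. Writing $\tau_{2n-1}(\cW)=\Delta_{2n-1}(c)$ with $c=\sum_i[t_i]\in\Z_2\otimes\cT_{n-1}$ and performing coordinated twists for each $t_i$ cancels $\tau_{2n-1}(\cW)$ in $\cT_{2n-1}$ modulo AS and IHX, producing $\cW'$.

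The main technical obstacle is that $\cW$ need not contain Whitney disks realizing all the required rootings $(t_i,v)$. I would handle this via a preliminary enlargement: for each $t_i$ and each univalent vertex $v$ of $t_i$, connect-sum $\cW$ inside a small disjoint sub-ball of $B^4$ with a model framed order $(2n{-}1)$ Whitney tower, built by the iterated Bing doubling and band-summing procedure underlying the realization map $R_n$, and containing a Whitney disk of rooted tree $(t_i,v)$. Each model tower bounds an unlink in its sub-ball, so $\partial\cW=L$ is unchanged, and any new contribution to $\tau_{2n-1}$ again lies in $\im(\Delta_{2n-1})$ and can be folded into $c$ before the coordinated twisting step. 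The heart of the proof is thus the identification of $\Delta_{2n-1}$ with the boundary-twist change formula on order $(n{-}1)$ Whitney disks, which is the framed side of the quadratic enhancement promised in the abstract, together with the symmetry argument that upgrades a single doubled summand to the full image of $\Delta_{2n-1}$.
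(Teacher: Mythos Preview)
Your reduction of the first two assertions to the third is correct, and the overall strategy of modifying $\cW$ to kill $\tau_{2n-1}$ and then invoking Theorem~\ref{thm:raise} is the paper's strategy as well. The gap is in the mechanism you propose for realizing $\Delta_{2n-1}(t)$ geometrically.

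First, a minor correction: the Whitney disk one boundary-twists has order $n$, with rooted tree $(i,J)$ where $J=T_v(t)$ has order $n-1$; twisting it into the supporting order-$(n-1)$ disk $W_J$ produces an intersection with tree $\langle (i,J),J\rangle=\langle i,(J,J)\rangle$, a single summand of $\Delta_{2n-1}(t)$. This is not a self-intersection, and a disk literally of order $n-1$ would produce intersections of order strictly less than $2n-1$.

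The substantive gap is the framing bookkeeping in your ``coordinated family of twists''. Each boundary-twist changes the framing of its Whitney disk by $\pm 1$. For $n=1$ the two summands of $\Delta_1(\langle i,j\rangle)$ are produced by twisting the \emph{same} disk $W_{(i,j)}$ once into each sheet, and the two twists cancel on the framing; this is exactly the paper's $n=1$ argument. For $n>1$, however, the summands of $\Delta_{2n-1}(t)$ come from boundary-twisting \emph{different} order-$n$ Whitney disks $W_{(i(v),T_v(t))}$, one for each univalent vertex $v$, and after one twist each of these disks is left $\pm 1$-twisted. Restoring the framing of such a disk by a second boundary-twist into the order-zero sheet $A_{i(v)}$ creates an order-$n$ intersection (destroying the order-$(2n-1)$ tower), while a second twist into $W_{T_v(t)}$ or an interior twist either cancels the desired summand or produces it with coefficient $2$, which is zero by AS. Your model-tower enlargement does not help here: it supplies the disks to twist, but not a way to undo the resulting twistings.

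The paper (Theorem~\ref{thm:framed-order-raising-mod-Delta}) closes this gap with a genuinely different move. One creates a single clean framed $W_{(i,(I_1,I_2))}$ by finger moves, twist-splits it into a $+1$- and a $-1$-twisted copy, and then applies the \emph{twisted IHX Whitney move} of Lemma~\ref{lem:twistedIHX} repeatedly to one copy, pushing the $\iinfty$-root outward until every resulting twisted Whitney disk has a boundary arc on an order-zero surface. Only then does one boundary-twist each such disk into its adjacent order-zero sheet; this simultaneously restores all framings and deposits exactly the summands of $\Delta_{2n-1}(t)$ (plus higher-order error terms from Lemma~\ref{lem:twistedIHX}). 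The twisted IHX lemma is the essential ingredient your proposal is missing.
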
 
Here the results of \cite{CST2,CST3,CST4} also apply to the cases where $n$ is \emph{even}, implying that $\widetilde \tau_{4k-1}(\cW) \in \widetilde \cT_{4k-1}$ only depends on the link $L$ and not the Whitney tower $\cW$, and that 
$\widetilde \tau_{4k-1}$ induces an inverse to the realization map $\widetilde R_{4k-1}$.
We get the following classification in these orders:  
\begin{thm} [\cite{CST4}]\label{thm:4k-1} 
 $\widetilde R_{4k-1}:\cT_{4k-1}\sra\widetilde \W_{4k-1}$ are isomorphisms. 
\end{thm}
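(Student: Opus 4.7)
The plan is to verify injectivity of $\widetilde{R}_{4k-1}$ by constructing an inverse from the reduced intersection invariant; surjectivity is already contained in Theorem~\ref{thm:odd}. Following the template of the even-order case (Theorem~\ref{thm:even}), I would show that the composition
\[
\widetilde\tau_{4k-1} : \mathfrak{W}_{4k-1} \xrightarrow{\tau_{4k-1}} \cT_{4k-1} \twoheadrightarrow \widetilde\cT_{4k-1}
\]
descends first through the boundary map $\partial\colon\mathfrak{W}_{4k-1}\to\bW_{4k-1}$ (independence of the Whitney tower filling), and then to $\W_{4k-1}$ (invariance under order~$4k$ Whitney tower concordance), and finally that the resulting map satisfies $\widetilde\tau_{4k-1}\circ\widetilde{R}_{4k-1}=\id$ on $\widetilde\cT_{4k-1}$. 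Since $\widetilde{R}_{4k-1}$ is already known to be surjective, this forces it to be injective as well.

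For the Whitney-tower-independence step, given two order $4k-1$ Whitney towers $\cW_0,\cW_1$ bounded by the same link $L$, I would relate them by a finite sequence of ambient isotopies and local modifications: Whitney moves, finger moves, boundary-twists, Whitney-disk replacements, and reframings. The geometric realizations of the AS and IHX relations established in \cite{CST} as part of Theorem~\ref{thm:raise} ensure that the first four types change $\tau_{4k-1}$ only by relators already trivial in $\cT_{4k-1}$, while framing changes alter $\tau_{4k-1}$ by elements in $\im(\Delta_{4k-1})$, which vanish after passing to $\widetilde\cT_{4k-1}$. Concordance invariance would then follow by stacking Whitney towers bounded by $L_0$ and $-L_1$ onto an order~$4k$ Whitney tower concordance between them, to form a closed object whose intersection contributions lie in order $4k$ and hence vanish in order $4k-1$. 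Verifying the identity $\widetilde\tau_{4k-1}\circ\widetilde{R}_{4k-1}=\id$ is then a direct computation on the generating trees, using that the iterated Bing-doubling realization of a tree $t$ carries a canonical Whitney tower whose only $\tau_{4k-1}$-contribution is $t$ itself.

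The main obstacle is the Whitney-tower-independence step, because for odd orders there is \emph{a priori} an additional contribution coming from twisted Whitney disks, encoded by the higher-order Arf and Sato-Levine invariants mentioned in the abstract. The entire reason for restricting to orders of the form $4k-1$ is that, as shown in \cite{CST4} via the exact sequences relating the twisted and framed filtrations, these twisting contributions in order $4k-1$ lie precisely in $\im(\Delta_{4k-1})$ and therefore become trivial in $\widetilde\cT_{4k-1}$; by contrast, in orders $4k+1$ they would produce genuine higher-order Arf obstructions that prevent $\widetilde R$ from being an isomorphism without a further quotient. I would therefore expect the bulk of the work to be importing the twisted/framed exact sequence analysis of \cite{CST4} and verifying that the obstruction class in this order is absorbed by the framing relation.
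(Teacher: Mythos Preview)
The paper does not prove this theorem here; it is attributed to \cite{CST4}, and the summary in Section~\ref{sec:intro} outlines the actual route: one passes through the \emph{Milnor invariants}, which are classically known to be link concordance invariants, to obtain an epimorphism $\mu_n:\W^\iinfty_n\twoheadrightarrow\sD_n$. The composition $\eta_n=\mu_n\circ R^\iinfty_n:\cT^\iinfty_n\to\sD_n$ is computed combinatorially (summing over root choices), and the resolution of the Levine Conjecture in \cite{CST3} together with the algebra of \cite{CST4} shows $\eta_n$ is an isomorphism for $n\equiv 0,1,3\pmod 4$. Injectivity of $R^\iinfty_n$ follows immediately, since $\mu_n\circ R^\iinfty_n$ is injective; the framed statement for $\widetilde R_{4k-1}$ is then extracted from the exact sequences of Proposition~\ref{prop:exact sequence} and Theorem~\ref{thm:Tau-sequences}. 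The point is that well-definedness of an inverse is never argued directly: it is \emph{deduced} from the injectivity of $\eta_n$, whose target $\sD_n$ is built from invariants already known to depend only on the link.

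Your proposal has a genuine gap at the Whitney-tower-independence step. You assert that any two order~$4k-1$ Whitney towers $\cW_0,\cW_1$ with the same boundary link are related by a finite sequence of local modifications (Whitney moves, finger moves, boundary-twists, reframings), and that tracking $\tau_{4k-1}$ through these moves shows the difference lies in $\im(\Delta_{4k-1})$. No such move-classification of Whitney towers with fixed boundary is available; the results of \cite{CST,ST2} you cite only show that the relations in $\cT_n$ can be \emph{realized} by such moves, not that every pair of towers is connected by them. Establishing that $\widetilde\tau_{4k-1}(\cW)$ depends only on $\partial\cW$ is essentially equivalent to the theorem itself, so this step is circular as written. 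Your final paragraph correctly identifies that the exact sequence analysis from \cite{CST4} is where the work lies, but it is needed not merely to ``absorb twisting contributions'': it (together with the Levine Conjecture and the Milnor invariants) is the entire mechanism that furnishes the inverse, bypassing any direct comparison of Whitney towers.
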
 

The framing relations in odd orders arise from a subtle interplay between
the IHX relations and certain framing obstructions associated to Whitney disks, as will be described below. As suggested by theorems~\ref{thm:odd} and \ref{thm:4k-1},
we believe that these relations do indeed capture all
the relevant geometric aspects regarding the filtration. Setting $\widetilde{\cT}_{2n}:=\cT_{2n}$, we conjecture:    
\begin{conj} \label{conj:4k-3} 
The realization maps give
$\W_n\cong\widetilde \cT_n$ for all $n$.
\end{conj}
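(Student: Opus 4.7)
The plan is to isolate the one remaining case and adapt the tools used for the others. Theorem~\ref{thm:even} gives $R_{2n}\colon\cT_{2n}\xrightarrow{\cong}\W_{2n}$, and Theorem~\ref{thm:4k-1} gives $\widetilde R_{4k-1}\colon\widetilde\cT_{4k-1}\xrightarrow{\cong}\W_{4k-1}$. Surjectivity of $\widetilde R_n$ for every $n$ is already provided by Theorem~\ref{thm:R-onto-G} together with the vanishing statement of Theorem~\ref{thm:odd}. Thus Conjecture~\ref{conj:4k-3} reduces entirely to establishing injectivity of
\[
\widetilde R_{4k-3}\colon\widetilde\cT_{4k-3}\sra\W_{4k-3},\qquad k\geq 1,
\]
that is, handling orders $n\equiv 1\pmod 4$.

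My approach would be to construct an inverse as a reduced intersection invariant
\[
\widetilde\tau_{4k-3}\colon\W_{4k-3}\longrightarrow\widetilde\cT_{4k-3},\qquad [L]\longmapsto\bigl[\tau_{4k-3}(\cW)\bigr]\in\cT_{4k-3}/\im(\Delta_{4k-3}),
\]
where $\cW$ is any order $(4k-3)$ Whitney tower with $\partial\cW=L$. By Corollary~\ref{cor:tau=w-concordance}, once $\widetilde\tau_{4k-3}$ is shown to be well-defined it is automatically inverse to $\widetilde R_{4k-3}$. Well-definedness splits into two sub-steps: (a) independence of the chosen tower $\cW$ modulo $\im(\Delta_{4k-3})$, and (b) invariance under link concordance. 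I would model both on the proof of Theorem~\ref{thm:4k-1} in \cite{CST4}, which controls the analogous ambiguities in orders $4k-1$ by combining explicit Whitney tower manipulations with the twisted Whitney tower intersection theory and its quadratic framing enhancement advertised in the abstract.

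The main obstacle is precisely what distinguishes orders $4k-3$ from orders $4k-1$: the possible appearance of the \emph{higher-order Arf invariants} $\arf_k$. Via the exact sequences comparing twisted and framed filtrations (advertised in the abstract and to be developed in the body of the paper), the injectivity of $\widetilde R_{4k-3}$ is equivalent to the vanishing of $\arf_k$ on the appropriate summand of $\W_{4k-3}$; equivalently, every link bounding a twisted Whitney tower of order $4k-2$ with trivial twisted intersection tree must already bound a \emph{framed} Whitney tower of order $4k-2$. I would try to establish this by parametrizing candidate generators of $\ker\widetilde R_{4k-3}$ through iterated Bing doubling of the Hopf link (in the spirit of the surjectivity argument for Theorem~\ref{thm:R-onto-G}) and then producing the required framed Whitney tower by an explicit sequence of Whitney moves combined with twisted-to-framed conversions on Whitney disks. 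The hard part, and the reason this remains a conjecture, is that no known numerical link invariant is currently available that detects $\arf_k$ for $k\geq 1$; so the desired vanishing cannot be certified by computing a concrete obstruction and must instead be proved by a direct geometric construction that appears to require new ideas beyond those used in \cite{CST,CST2,CST3,CST4}.
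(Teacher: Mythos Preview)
This statement is a \emph{conjecture}; the paper does not prove it, and explicitly says so. So there is no ``paper's own proof'' to compare against, and your proposal should be read as an outline toward an open problem.

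Your reduction to orders $n=4k-3$ is correct, and framing the question as well-definedness of a reduced intersection invariant $\widetilde\tau_{4k-3}$ is exactly the right picture. But the crucial logical step is inverted. The paper states (just after Conjecture~\ref{conj:4k-3} and again in the discussion of Conjecture~\ref{conj:twisted}) that the conjecture is equivalent to the \emph{non-triviality} of the higher-order Arf invariants, not their vanishing. Concretely: inside $\widetilde\cT_{4k-3}$ there is a subgroup isomorphic to $\Z_2\otimes\sL_k$ (coming from the squaring map in Theorem~\ref{thm:Tau-sequences} and Levine's computation) whose image under $\widetilde R_{4k-3}$ is not detected by Milnor or higher-order Sato-Levine invariants. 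Injectivity of $\widetilde R_{4k-3}$ means these images are \emph{nonzero} in $\W_{4k-3}$, which is precisely the assertion that the Arf invariants $\arf_k$ are nontrivial. Your plan---to ``produce the required framed Whitney tower'' for the Bing-doubled candidates---would, if it succeeded, show those links are \emph{zero} in $\W_{4k-3}$, exhibiting a kernel for $\widetilde R_{4k-3}$ and disproving the conjecture.

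Your proposed equivalent reformulation is also off: a link bounding an order $4k-2$ twisted Whitney tower with $\tau^\iinfty_{4k-2}=0$ already bounds an order $4k-1$ twisted (hence framed) tower by Theorem~\ref{thm:twisted-raising-intro}, so that statement is automatic and unrelated to the conjecture. The genuine open content is whether the links realizing the $\Z_2\otimes\sL_k$ classes fail to bound order $4k-2$ \emph{framed} towers---i.e., whether there exist concordance invariants (the hoped-for $\arf_k$) distinguishing them from slice links. That requires constructing new obstructions, not new Whitney towers.
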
 
As discussed below and described in detail in \cite{CST2,CST4}, the affirmation of this conjecture is equivalent
to the non-triviality of certain \emph{higher-order Arf invariants} which complete the classification of $\W_{4k-3}$. 

The big picture is best described by
introducing the following generalization of framed Whitney towers.

\subsection{The Twisted Whitney tower filtration}\label{subsec:twisted-W-tower-filtration}
The proof of Theorem~\ref{thm:odd} given in Section~\ref{sec:proof-thm-odd} uses in addition to IHX type maneuvers another well known geometric move on  a Whitney disk, namely the \emph{boundary twist}. A {\em framed} Whitney disk changes into a {\em twisted} Whitney disk by this move, at the cost of creating a new interior intersection in the Whitney disk (Figure~\ref{boundary-twist-and-section-fig}). 

This motivated us to introduce yet another filtration of the set of links  by looking at those links $\bW^\iinfty_n=\bW^\iinfty_n(m)$ with $m$ components that bound {\em twisted} Whitney towers of order $n$ (rather than {\em framed} Whitney towers as in the definition of $\bW_n$). These are Whitney towers, except that certain Whitney disks are allowed to be twisted. We arrive at the\\
{\em Twisted Whitney tower filtration} (by order)
\begin{equation} \tag{$\bW^\iinfty$} 
\dots \subseteq \bW^\iinfty_{3} \subseteq \bW^\iinfty_{2} \subseteq \bW^\iinfty_{1} \subseteq  \bW^\iinfty_{0}=\bL 
\end{equation}
We refer to Section~\ref{sec:realization-maps} for a precise definition, also of the associated graded $\W^\iinfty_n=\W^\iinfty_n(m)$; and to Section~\ref{sec:w-towers} for details on twisted Whitney towers, including the associated \emph{twisted intersection invariant} $\tau_n^\iinfty(\cW)\in\cT^\iinfty_n= \cT^\iinfty_n (m)$. 
The groups  $\cT^\iinfty_{2n-1}$ are defined as quotients of $\cT_{2n-1}$ by the subgroups generated by trees of the form 
\[ 
  i\,-\!\!\!\!\!-\!\!\!<^{\,J}_{\,J}
\] 
where $J$ is a subtree of order~$n-1$. These \emph{boundary-twist relations} correspond to the intersections created by performing a boundary-twist on an order $n$ Whitney disk. The groups $\cT^\iinfty_{2n}$ include certain ``twisted'' \emph{$\iinfty$-trees} representing framing obstructions on order $n$ Whitney disks, which are not required to be framed in an order $2n$ twisted Whitney tower. In \cite{CST4} we show that 
$\cT$ is the universal home for invariant symmetric bilinear forms on quasi-Lie algebras, and that $\cT^\iinfty_{2n}$ is the universal (symmetric quadratic) refinement of this form in order $n$.  

Here and in the following the symbol $\iinfty$ represents a {\em twist}, and in particular does {\em not} stand for ``infinity''. 

\begin{thm} \label{thm:twisted} 
For each $m,n$, there are surjective realization maps 
\[ 
R^\iinfty_n=R^\iinfty_n(m) : \cT^\iinfty_n(m) \sra\W^\iinfty_n(m) 
\] 
and the sets $\W^\iinfty_n(m)$ are finitely generated abelian groups under the well-defined connected sum $\#$ operation. Moreover, $\W^\iinfty_n$ is the set of framed links $L\in\bW^\iinfty_n$ modulo the relation that $[L_1]=[L_2]\in\W^\iinfty_n$ if and only if $L_1 \# -L_2$ lies in $\bW^\iinfty_{n+1}$, for some choice of connected sum $\#$. Here $-L$ is the mirror image of $L$ with reversed framing. 
\end{thm}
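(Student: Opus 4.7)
The plan is to mimic, step for step, the proof strategy used for Theorem~\ref{thm:R-onto-G}, so that the only new content is in verifying that the relations defining $\cT^\iinfty_n$ are geometrically realized in the twisted Whitney tower setting. First I would establish the twisted analogue of the ``raising the order'' Theorem~\ref{thm:raise}: a link bounds an order~$n$ twisted Whitney tower $\cW$ with $\tau_n^\iinfty(\cW)=0\in\cT^\iinfty_n$ if and only if it bounds an order $n+1$ twisted Whitney tower. Most of what is needed is inherited from the framed version (the controlled realizations of the AS and IHX relations by moves on Whitney disks and their boundaries, cf.~\cite{CST,S1,ST2}), but two genuinely new realization steps arise: the boundary-twist relation in odd orders must be realized by performing a boundary-twist on an order~$n$ Whitney disk (which trades a framed pairing for a twisted Whitney disk carrying a single self-intersection of type $i\,-\!\!-\!\!<^{\,J}_{\,J}$, see Figure~\ref{boundary-twist-and-section-fig}), and the twisted $\iinfty$-tree relations in even orders must be realized by introducing pairs of oppositely-twisted Whitney disks sharing a boundary arc. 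Combined, these moves show that every defining relation of $\cT^\iinfty_n$ can be realized while staying inside order~$n$ twisted Whitney towers up to raising the order by one.

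From this ``raising'' result, a twisted analogue of Corollary~\ref{cor:tau=w-concordance} follows exactly as in the framed case: two links lie in the same class in $\W^\iinfty_n$ iff they bound order~$n$ twisted Whitney towers whose twisted intersection invariants agree in $\cT^\iinfty_n$. Next I would verify that $\tau_n^\iinfty : \mathfrak{W}^\iinfty_n \twoheadrightarrow \cT^\iinfty_n$ is surjective by a Bing-doubling-and-internal-band-sum construction on the Hopf link, supplemented, in even orders, by realizing the generating $\iinfty$-trees via a single order~$n$ Whitney disk twisted against its framing, which is the standard geometric source of framing obstructions. Arranging the twisted commutative diagram
\[
\xymatrix{
\mathfrak{W}^\iinfty_n \ar@{->>}[d]_{\tau^\iinfty_n} \ar@{->>}[r]^{\partial} & \bW^\iinfty_n \ar@{->>}[d] \\
\cT^\iinfty_n \ar@{->>}[r]^{R^\iinfty_n} & \W^\iinfty_n
}
\]
then defines $R_n^\iinfty$ directly, with surjectivity coming from surjectivity of both $\partial$ and $\tau_n^\iinfty$. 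Finite generation of $\W^\iinfty_n$ follows from that of $\cT^\iinfty_n$, which is immediate since $\cT_n$ is finitely generated and $\cT^\iinfty_n$ is a quotient of $\cT_n$ (in odd orders) or of $\cT_n$ enlarged by finitely many $\iinfty$-trees (in even orders).

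For the well-definedness of $\#$ and the identification of $\W^\iinfty_n$ as $\bW^\iinfty_n/\bW^\iinfty_{n+1}$ under connected sum, I would proceed as for the framed case: given two choices of connecting bands between $L_1$ and $L_2$, the difference $L_1\#L_2$ versus $L_1\#'L_2$ bounds a twisted Whitney tower obtained by attaching a framed annular cobordism between the bands to any chosen order~$n$ twisted Whitney tower, with all newly introduced intersections, and any twistings of the Whitney disks used to pair them, contributing terms of order strictly greater than $n$. Hence the connected sum is well-defined modulo $\bW^\iinfty_{n+1}$. The inverse of $[L]$ is represented by $-L$ since $L\#-L$ is slice and hence bounds a flat embedded disk, which is a twisted Whitney tower of arbitrary order.

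The main obstacle I expect is the ``raising the order'' step, specifically the geometric realization of the boundary-twist relation $i\,-\!\!-\!\!<^{\,J}_{\,J}=0$ in odd orders; this is precisely where the framed and twisted theories diverge, and one must check that the intersections created by performing the boundary twist can be absorbed into an order $2n$ twisted Whitney tower without disturbing the $\iinfty$-trees already present. The parallel work in \cite{CST,S1,ST2} for framed towers should carry through after being augmented by the explicit boundary-twist and twist-cancellation moves, but this is the delicate content of the proof.
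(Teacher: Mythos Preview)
Your overall architecture matches the paper's: the proof of Theorem~\ref{thm:twisted} is indeed run in parallel with Theorem~\ref{thm:R-onto-G}, resting on the twisted order-raising Theorem~\ref{thm:twisted-order-raising-on-A}, its corollary (Remark~\ref{rem:tau=w-concordance}), additivity of intersection forests under band sum (Lemma~\ref{lem:exists-tower-sum}), and a Bing-doubling construction realizing arbitrary trees and $\iinfty$-trees (Lemma~\ref{lem:realization-of-geometric-trees}). So the skeleton is right.

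Two points deserve correction. First, you have inverted where the real difficulty sits in the order-raising step. The odd case is the easy one: the boundary-twist relation is realized by a single boundary twist (Figure~\ref{boundary-twist-and-section-fig}), and the rest is inherited from the framed theory. The substantive new content is in \emph{even} order $2n$. There one must (i) realize the interior-twist relation $2J^\iinfty=\langle J,J\rangle$ via an interior twist on a clean $W_J$, (ii) realize the twisted IHX relation $I^\iinfty=H^\iinfty+X^\iinfty-\langle H,X\rangle$ via the twisted IHX Whitney-move of Lemma~\ref{lem:twistedIHX}, and---most importantly---(iii) geometrically cancel algebraically canceling pairs of clean $\pm 1$-twisted Whitney disks. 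Step (iii) is the genuinely new construction: one first uses Lemma~\ref{lem:twistedIHX} repeatedly to reduce to \emph{simple} $\iinfty$-trees, then iterates a banding-together construction (adapted from \cite[10.8]{FQ}) that merges the two twisted Whitney disks into a single framed one, at the cost only of higher-order intersections. Your phrase ``introducing pairs of oppositely-twisted Whitney disks sharing a boundary arc'' does not capture any of this.

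Second, your argument for independence of $\#$ from the bands is not the paper's and is not obviously valid. You assert that an annular cobordism between two band choices creates only intersections of order $>n$, but there is no mechanism forcing that. The paper instead argues via $\tau_n^\iinfty$: by Lemma~\ref{lem:exists-tower-sum}, any band sum $L\#_\beta L'$ bounds a tower with $\tau_n^\iinfty=\tau_n^\iinfty(\cW)+\tau_n^\iinfty(\cW')$, independent of $\beta$; then the twisted analogue of Corollary~\ref{cor:tau=w-concordance} gives equality in $\W^\iinfty_n$. Similarly, for the ``Moreover'' biconditional the paper does not invoke sliceness of $L\#-L$ directly but rather observes that the reflection $r\times\mathrm{id}$ carries $\cW$ to a tower $\overline{\cW}$ on $-L$ with $\tau_n^\iinfty(\overline{\cW})=-\tau_n^\iinfty(\cW)$, and handles the converse by a gluing of $4$--balls along the $S^2\times I$ carrying the bands.
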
 
As before, the key step in proving this result is the following criterion for {\em raising the order of a twisted Whitney tower}:
\begin{thm} \label{thm:twisted-raising-intro} 
A link bounds a twisted Whitney tower $\cW$ of order~$n$ with $\tau^{\iinfty}_{n}(\cW)=0$ if and only if it bounds a twisted Whitney tower of order $n+1$. 
\end{thm}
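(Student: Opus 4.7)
The plan is to follow the pattern of the framed analogue Theorem~\ref{thm:raise} from \cite{CST,S1,ST2}, extending the geometric realizations of tree relations to incorporate twists. The ``if'' direction is immediate: if $L$ bounds an order $n+1$ twisted Whitney tower $\cW'$, then viewing $\cW'$ as an order $n$ twisted Whitney tower shows $\tau^{\iinfty}_{n}(\cW')=0$, since all its unpaired intersections and twists have order strictly greater than $n$.

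For the converse, the strategy is to realize each defining relation of $\cT^{\iinfty}_{n}$ by a controlled local move on a twisted Whitney tower of order $n$ that preserves the order but modifies the set of order-$n$ \emph{defects} --- unpaired order-$n$ transverse intersection points, together with, in the even-order case, twisted Whitney disks whose $\iinfty$-trees contribute to $\tau^{\iinfty}_{n}$ --- in the prescribed way. Granted such realizations, any vanishing $\tau^{\iinfty}_{n}(\cW)=0$ in $\cT^{\iinfty}_{n}$ can be expressed as a finite $\Z$-linear combination of defining relations, and applying the corresponding geometric moves to $\cW$ in sequence cancels all order-$n$ defects in pairs. What remains is a twisted Whitney tower of order $n+1$.

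I would supply the required moves in three groups. First, the AS relation is realized by reordering cyclic orientations at a trivalent vertex. Second, the IHX relation is realized by the local Whitney-arc and Whitney-disk manipulations developed in \cite{CST} for the framed setting; these carry through in the twisted setting because they can be performed away from any twisted Whitney disks and do not interact with existing twists. Third, for odd orders $2n-1$, the boundary-twist relations $i\edge\!<^{\,J}_{\,J}$ are realized by the boundary-twist move on an order $n$ Whitney disk with tree $i\edge\! J$, which trades an order $2n-1$ self-intersection of the prescribed shape for a $\iinfty$-twist whose contribution first appears in order $2n$ and is thus invisible to $\tau^{\iinfty}_{2n-1}$. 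For even orders $2n$, the additional symmetry and quadratic relations on twisted $\iinfty$-trees are realized by the twist-to-intersection maneuvers described in Section~\ref{sec:w-towers}, which reflect the universal quadratic refinement structure established in \cite{CST4}.

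The main obstacle will be ensuring compatibility among these moves: the IHX construction of \cite{CST} could in principle introduce framing discrepancies on neighboring Whitney disks, and these must be absorbed using boundary twists in a way that preserves the vanishing of $\tau^{\iinfty}_{n}$ throughout the reduction. A secondary difficulty appears in the even-order case, where the simultaneous cancellation of transverse double points and intrinsic twists requires that the algebraic relations in $\cT^{\iinfty}_{2n}$ precisely match the available geometric moves; the quadratic refinement framework of \cite{CST4} is expected to guarantee this match, so no further moves beyond those listed should be needed.
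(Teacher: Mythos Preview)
Your outline captures the overall shape of the argument---realize relators geometrically, arrange the order-$n$ defects into algebraically canceling pairs, then eliminate them---but it has a genuine gap at the final step, and this gap is precisely where the new content of the proof lies.

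You write that after realizing the relations one ``cancels all order-$n$ defects in pairs'' and ``what remains is a twisted Whitney tower of order $n+1$.'' But algebraic cancellation is not geometric cancellation. Two unpaired intersections $p,p'$ with $t_p=t_{p'}$ and $\epsilon_p=-\epsilon_{p'}$ need not lie on the same pair of Whitney disks, so there is no Whitney disk pairing them a priori. For non-$\iinfty$ trees this is handled by the tree-simplification and geometric-cancellation procedure of \cite{S1,ST2}, which you do not invoke. More seriously, in the even-order case you give no mechanism at all for eliminating an algebraically canceling pair of clean $\pm 1$-twisted Whitney disks $W_{J}$, $W'_{J}$. Your appeal to ``twist-to-intersection maneuvers'' and to the quadratic-refinement framework of \cite{CST4} does not address this: \cite{CST4} supplies the algebraic interpretation of $\cT^\iinfty_{2n}$, not a geometric cancellation move.

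The paper fills this gap with two ingredients you are missing. First, the twisted IHX lemma (Lemma~\ref{lem:twistedIHX}) is used not only to realize the twisted IHX relator but also, iteratively, to replace every twisted Whitney disk by ones whose $\iinfty$-trees are \emph{simple} (right-normed) with the $\iinfty$-label at one end. Second, for such simple algebraically canceling pairs, the paper introduces an iterated banding construction (an elaboration of one from \cite[10.8]{FQ}) that merges corresponding Whitney disks level by level along the simple tree; this construction is additive on twistings, so at the top level the $\pm 1$-twisted disks combine into a single framed clean Whitney disk, achieving genuine geometric cancellation at the cost only of higher-order intersections and higher-order twisted disks. Without these two steps your argument does not produce an order $n+1$ tower.
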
 
Theorem~\ref{thm:twisted-raising-intro} follows from the more general Theorem~\ref{thm:twisted-order-raising-on-A} in Section~\ref{sec:w-towers}, which is proved in Section~\ref{sec:proof-twisted-thm} using ``twisted Whitney moves''
(Lemma~\ref{lem:twistedIHX}) as well as boundary-twists and a construction for ``geometrically cancelling'' twisted Whitney disks. 

In the twisted setting we also have classifications of the associated graded groups of links in three out of four cases: 
\begin{thm}[\cite{CST2,CST4}] \label{thm:twisted-isos} 
For $n\equiv 0,1,3\mod 4$, the realization maps $R^\iinfty_{n}:\cT^\iinfty_n \sra \W^\iinfty_n$ are isomorphisms. 	
\end{thm}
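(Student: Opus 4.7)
The plan is to construct an inverse to $R^\iinfty_n$ using the twisted intersection invariant. Since $R^\iinfty_n$ is already surjective by Theorem~\ref{thm:twisted}, it suffices to exhibit a well-defined homomorphism $\widehat\tau^\iinfty_n : \W^\iinfty_n \to \cT^\iinfty_n$ sending the class of $\partial\cW$ to $\tau^\iinfty_n(\cW)$ for each order $n$ twisted Whitney tower $\cW\subset B^4$. The identity $\widehat\tau^\iinfty_n \circ R^\iinfty_n = \id_{\cT^\iinfty_n}$ will then be immediate from the iterated Bing-doubling construction of $R^\iinfty_n$: the link realizing a generator tree $t\in\cT^\iinfty_n$ bounds an evident twisted Whitney tower whose tree count is exactly $t$. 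Thus the entire content lies in showing that $\cW\mapsto \tau^\iinfty_n(\cW)$ descends from the set of twisted Whitney towers in $B^4$ to the quotient $\W^\iinfty_n$.

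I would reduce this descent to a closed statement by a standard doubling argument. Suppose $\cW_0,\cW_1\subset B^4$ are order $n$ twisted Whitney towers with $[\partial\cW_0]=[\partial\cW_1]\in\W^\iinfty_n$; by definition of the filtration quotient, $\partial\cW_0\#(-\partial\cW_1)$ bounds an order $n{+}1$ twisted Whitney tower $\cW'\subset B^4$. Gluing $\cW_0\#(-\cW_1)$ and $-\cW'$ along $S^3$ produces a closed order $n$ twisted Whitney tower $\cW\subset S^4$, and additivity of intersection trees together with $\tau^\iinfty_n(\cW')=0$ (since $\cW'$ has order $n{+}1$) gives $\tau^\iinfty_n(\cW)=\tau^\iinfty_n(\cW_0)-\tau^\iinfty_n(\cW_1)\in\cT^\iinfty_n$. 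The problem therefore reduces to showing: any closed order $n$ twisted Whitney tower $\cW$ in $S^4$ satisfies $\tau^\iinfty_n(\cW)=0$ in $\cT^\iinfty_n$, whenever $n\equiv 0,1,3\pmod 4$.

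The hard part is this closed-tower vanishing, and it is precisely here that the congruence restriction enters. The approach is to exploit the quadratic refinement structure on $\cT^\iinfty$ described at the end of Section~\ref{subsec:twisted-W-tower-filtration}: the trees arising in a closed twisted Whitney tower come in pairs produced by geometric intersection pairings, and these pairings correspond to the invariant symmetric bilinear form on the quasi-Lie algebra $\cT$, with $\cT^\iinfty$ housing its quadratic refinement. Using boundary-twists, the twisted IHX moves of Lemma~\ref{lem:twistedIHX}, and the geometric cancellation of paired twisted Whitney disks employed in the proof of Theorem~\ref{thm:twisted-raising-intro}, one rewrites $\tau^\iinfty_n$ of a closed tower as a sum of configurations each imposing a defining relation of $\cT^\iinfty_n$. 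For $n\equiv 0\pmod 4$ and $n\equiv 1\pmod 4$, a hyperbolic splitting of the resulting form forces vanishing of the quadratic refinement; for $n\equiv 3\pmod 4$ one reduces to the framed classification Theorem~\ref{thm:4k-1} via the boundary-twist relations built into $\cT^\iinfty_{4k+3}$. The remaining case $n\equiv 2\pmod 4$ is exactly where the strategy breaks: a residual kernel of the quadratic refinement survives, measuring the conjectural higher-order Arf invariants whose triviality is the content of Conjecture~\ref{conj:4k-3} and its twisted counterpart.
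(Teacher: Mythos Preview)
Your reduction to a closed-tower vanishing statement in $S^4$ is fine, but the ``hard part'' is where the argument collapses into hand-waving. You assert that geometric moves (boundary-twists, twisted IHX, cancellation of paired twisted Whitney disks) will rewrite $\tau^\iinfty_n$ of a closed tower as a sum of relators, invoking a ``hyperbolic splitting of the resulting form'' for $n\equiv 0,1\pmod 4$. But nothing in the obstruction theory guarantees that the intersection forest of a \emph{specific} closed tower decomposes this way: the order-raising Theorem~\ref{thm:twisted-order-raising-on-A} only says that if $\tau^\iinfty_n(\cW)=0$ then one can raise the order, not conversely. Indeed, the paper explicitly states (just after Theorem~\ref{thm:raise}) that independence of $\tau_n(\cW)$ from the choice of $\cW$ is a \emph{consequence} of the classification theorems in \cite{CST2,CST3,CST4}, not an input to them. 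Your proposed argument is circular in this sense, and the appeal to Theorem~\ref{thm:4k-1} for $n\equiv 3\pmod 4$ is likewise circular, since that theorem is proved by the same machinery.

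The actual proof, outlined in the ``Summary of all papers'' section, takes an entirely different route through Milnor invariants. One first shows in \cite{CST2} that the order~$n$ Milnor invariants assemble into an epimorphism $\mu_n:\W^\iinfty_n\twoheadrightarrow\sD_n$; these are well-defined concordance invariants by classical arguments, so there is no circularity. The composition $\eta_n=\mu_n\circ R^\iinfty_n:\cT^\iinfty_n\to\sD_n$ is then identified with a purely combinatorial map (summing over all choices of root), and the resolution of the Levine Conjecture in \cite{CST3} together with the algebra of \cite{CST4} shows that $\eta_n$ is an isomorphism for $n\equiv 0,1,3\pmod 4$. Injectivity of $R^\iinfty_n$ then follows immediately, since it is already surjective and $\eta_n=\mu_n\circ R^\iinfty_n$ is injective. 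The congruence restriction enters because $\ker\eta_{4k-2}\cong\Z_2\otimes\sL_k$, which is precisely the (conjectural) domain of the higher-order Arf invariants.
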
 
In fact, in these orders the $\W^\iinfty_n$ are isomorphic to image $\sD_n$ of the first nonvanishing order $n$ (length $n+2$) Milnor link invariants \cite{CST2,CST4}. Theorem~\ref{thm:twisted} (as well as theorems~\ref{thm:even} and \ref{thm:4k-1} above) depends in an essential way on our resolution \cite{CST3} of a combinatorial conjecture of J. Levine involving a map from unrooted trees representing
Whitney tower intersections to rooted trees representing iterated commutators determined by link longitudes. As described in \cite{CST2,CST3}, the kernel of this map is generated by certain symmetric $\iinfty$-trees that are not detected by Milnor invariants. These trees are related to what we call {\em higher-order Arf invariants} of the twisted filtration in \cite{CST2}, and the conjectured non-triviality of these invariants is shown in \cite{CST4} to be equivalent to the above Conjecture~\ref{conj:4k-3} as well as the following:
\begin{conj} \label{conj:twisted} 
The maps $R^\iinfty_{4k-2}:\cT^\iinfty_{4k-2} \sra \W^\iinfty_{4k-2}$ are isomorphisms. 
\end{conj}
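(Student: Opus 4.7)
The plan is to establish injectivity of $R^\iinfty_{4k-2}$, since surjectivity is already given by Theorem~\ref{thm:twisted}. By Theorem~\ref{thm:twisted-raising-intro}, this reduces to showing: if $t\in\cT^\iinfty_{4k-2}$ is realized by a link $L$ bounding a twisted Whitney tower of order $4k-1$, then $t=0$. The strategy parallels the approach already established in \cite{CST2,CST4} for the other residue classes mod $4$: isolate the part of $\cT^\iinfty_{4k-2}$ detected by length-$(4k)$ Milnor invariants and handle the remainder separately.

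First, I would use the resolution of Levine's conjecture from \cite{CST3} to identify inside $\cT^\iinfty_{4k-2}$ the kernel $K$ of the map onto the image $\sD_{4k-2}$ of length-$(4k)$ Milnor invariants. By \cite{CST2,CST3}, $K$ is generated by certain symmetric $\iinfty$-trees, and since Milnor invariants are concordance invariants that factor through $\W^\iinfty_{4k-2}$, the quotient $\cT^\iinfty_{4k-2}/K$ already injects into $\W^\iinfty_{4k-2}$. Consequently, the problem reduces to proving that the subgroup $K$ (which is $2$-torsion) injects into $\W^\iinfty_{4k-2}$; equivalently, to constructing enough $\Z_2$-valued concordance invariants of links bounding order $4k-2$ twisted Whitney towers --- the \emph{higher-order Arf invariants} alluded to in the introduction --- to distinguish the generators of $K$.

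Next, to construct these higher-order Arf invariants, I would adapt the classical Brown--Kervaire construction. In the case $k=1$, the classical Arf invariant of a knot equals the mod-$2$ count of self-intersections of a framed immersed disk bounded by the knot. In the higher-order setting, one could try to extract a $\Z_2$-invariant from the quadratic refinement of the framed intersection pairing described in Section~\ref{subsec:twisted-W-tower-filtration}: the twist of each top-stage twisted Whitney disk is a $\Z$-valued framing obstruction which, reduced mod $2$ and summed over top-stage disks weighted by their associated trees (as in the definition of $\tau^\iinfty_{4k-2}$), should yield a Brown--Arf type quantity. Independently, one would use Cochran's iterated Bing doubling construction on an Arf-nontrivial base knot (e.g.\ the trefoil) to produce explicit links realizing each generator of $K$, against which the proposed invariant can be tested.

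The main obstacle --- and the reason Conjecture~\ref{conj:twisted} remains open --- is simultaneously establishing two properties of such an invariant: invariance under the moves implementing order $(4k-1)$ twisted Whitney tower concordance (so the invariant actually descends to $\W^\iinfty_{4k-2}$), and non-triviality on the Bing-doubled examples realizing $K$. The invariance step requires showing that neither a boundary-twist nor a twisted Whitney move (as used in the proof of Theorem~\ref{thm:twisted-raising-intro}) changes the invariant's value, while non-triviality requires, in effect, that iterated Bing doubling preserves an Arf-type $\Z_2$-quantity through all $k$ doubling stages. Since the classical Arf invariant is essentially the only known $\Z_2$-valued $4$-dimensional concordance invariant of this flavor, a complete proof likely demands either a genuinely new topological invariant (for instance via Kirby--Siebenmann or $\mathrm{Pin}^-$ structures on $4$-manifolds obtained by surgery on the twisted Whitney tower) or a structural reduction of the higher-order cases to the $k=1$ case using the recursive nature of Bing doubling together with the IHX and boundary-twist relations on trees.
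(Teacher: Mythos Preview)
The statement you are addressing is a \emph{conjecture}, not a theorem: the paper gives no proof of it, and explicitly presents it as open. Indeed, the paper states that this conjecture is equivalent (via \cite{CST4}) to the non-triviality of the higher-order Arf invariants, and emphasizes that these ``do not appear to correspond to any known invariants'' and take values in a ``currently unknown'' quotient of $\Z_2\otimes\sL_{(n+1)/2}$.

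Your proposal is therefore not a proof but a strategy outline, and you acknowledge as much in your final paragraph. The strategy you describe --- split $\cT^\iinfty_{4k-2}$ via the Milnor-invariant map $\eta_{4k-2}$ onto $\sD_{4k-2}$, identify the kernel $K\cong\Z_2\otimes\sL_k$ using the resolved Levine conjecture \cite{CST3}, and then seek $\Z_2$-valued concordance invariants detecting $K$ --- is exactly the framework set up in \cite{CST2,CST4} and summarized in the introduction of this paper. Your identification of the obstacle is also correct: what is missing is precisely the construction of higher-order Arf invariants that are simultaneously (i) well-defined on $\W^\iinfty_{4k-2}$ and (ii) non-trivial on the iterated Bing-doubles realizing the generators of $K$. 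No such construction is known for $k\geq 2$, which is why the statement remains a conjecture.

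In short: there is nothing to compare your proposal against, because the paper contains no proof. Your outline accurately reflects the state of the problem and the anticipated line of attack, but does not (and does not claim to) close the gap.
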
 

\subsection{Framed versus twisted Whitney towers}\label{subsec:overview}
There is a surprisingly simple relation between twisted and framed Whitney towers of various orders that's very well expressed in terms of the following result:

\begin{prop}\label{prop:exact sequence}
For any $n\in\N$, there is an exact sequence
\[
\xymatrix{ 
0\ar[r] & \W_{2n} \ar[r] &  \W^\iinfty_{2n} \ar[r] & \W_{2n-1}  \ar[r] &  \W^\iinfty_{2n-1} \ar[r] & 0
 } \]
where all maps are induced by the identity on the set of links. 
\end{prop}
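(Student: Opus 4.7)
The plan is to reduce the proposition to a single geometric lemma: the framed and twisted Whitney tower filtrations coincide in every odd order, i.e.,
\[
\bW^\iinfty_m \;=\; \bW_m \qquad \text{for every odd } m\geq 1.
\]
Granting this, the four-term sequence will follow from a chain of subset inclusions in $\bL$ by taking successive quotients --- essentially a single application of the third isomorphism theorem.

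The content of the lemma is the inclusion $\bW^\iinfty_m\subseteq\bW_m$ for odd $m$; the reverse is tautological. Given $L$ bounding an order-$m$ twisted Whitney tower $\cW$, I would boundary-twist each of the twisted Whitney disks in $\cW$ (these have order $\geq\lceil m/2\rceil=(m+1)/2$) so as to trade each twisting for a new interior intersection and thereby obtain a framed Whitney tower for $L$. By the intersection-theoretic interpretation of boundary-twists in Section~\ref{sec:w-towers} --- as reflected in the boundary-twist relations used to define $\cT^\iinfty_{2k-1}$ --- a boundary-twist on an order-$k$ Whitney disk creates a new intersection whose tree has order $2k-1$. For the disks being modified, $2k-1\geq m$, so the new intersections lie at order at least $m$ and are permitted to be unpaired in an order-$m$ framed Whitney tower. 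The resulting framed tower thus still has order $m$, giving $L\in\bW_m$.

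Granting the lemma, the equalities $\bW^\iinfty_{2n\pm 1}=\bW_{2n\pm 1}$, combined with the tautological inclusions $\bW_k\subseteq\bW^\iinfty_k$ and the filtration inclusions, yield the chain
\[
\bW_{2n+1} \;=\; \bW^\iinfty_{2n+1} \;\subseteq\; \bW_{2n} \;\subseteq\; \bW^\iinfty_{2n} \;\subseteq\; \bW^\iinfty_{2n-1} \;=\; \bW_{2n-1}.
\]
Passing to successive quotients and identifying the results with $\W_{2n}$, $\W^\iinfty_{2n}$, $\W_{2n-1}$, $\W^\iinfty_{2n-1}$ via Theorem~\ref{thm:R-onto-G} and Theorem~\ref{thm:twisted} produces the sequence of the proposition. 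Injectivity on the left uses $\bW^\iinfty_{2n+1}=\bW_{2n+1}$; surjectivity on the right uses $\bW^\iinfty_{2n-1}=\bW_{2n-1}$; and exactness at the two middle positions is the identification of both the kernel of $\W_{2n-1}\to\W^\iinfty_{2n-1}$ and the image of $\W^\iinfty_{2n}\to\W_{2n-1}$ as $\bW^\iinfty_{2n}/\bW_{2n}$. All maps are group homomorphisms because they are induced by the identity on links, under which the connected-sum operations on the two filtrations are compatible.

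The main obstacle is the order computation $2k-1$ for a boundary-twist on an order-$k$ Whitney disk, which is the key geometric input and is already part of the Whitney tower intersection theory. Parity is essential: for even $m=2n$, the same boundary-twisting argument would require modifying a twisted Whitney disk of order exactly $n$, producing an intersection of order $2n-1<m$ that would have to be paired in an order-$m$ framed tower. This is precisely the failure that allows the middle term $\bW^\iinfty_{2n}/\bW_{2n}$ to be nonzero and gives the exact sequence its content.
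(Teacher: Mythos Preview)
Your overall strategy---reduce to a chain of inclusions among the filtration sets and then pass to successive quotients---is exactly what the paper does. However, you have misidentified where the geometric work lies.

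The equality $\bW^\iinfty_{2n-1}=\bW_{2n-1}$ is not a lemma to be proved: it is the \emph{definition} of an odd-order twisted Whitney tower (Definition~\ref{def:twisted-W-towers}). So your boundary-twisting argument for it is unnecessary, and in fact the claim that a boundary-twist on an order-$k$ Whitney disk always produces an order-$(2k-1)$ intersection is incorrect. Boundary-twisting $W_{(I,J)}$ into the sheet $W_J$ creates an intersection with tree $\langle (I,J),J\rangle$ of order $k+\operatorname{order}(J)$; this equals $2k-1$ only when the \emph{other} sheet has order zero, i.e.\ when $I$ is a singleton $i$.

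The genuinely non-trivial inclusion in your chain is $\bW^\iinfty_{2n}\subseteq\bW^\iinfty_{2n-1}=\bW_{2n-1}$, which you wave through as a ``filtration inclusion'' but which actually requires proof: an order-$2n$ twisted Whitney tower may contain twisted Whitney disks of order $n$, and these must be made framed to yield a framed order-$(2n-1)$ tower. This is the paper's Lemma~\ref{lem:boundary-twisted-IHX}, and its proof uses boundary-twisting \emph{together with} the twisted IHX move of Lemma~\ref{lem:twistedIHX}. The IHX move is needed precisely to arrange that each twisted order-$n$ Whitney disk has a boundary arc on an order-zero surface, so that boundary-twisting into the remaining order-$(n-1)$ sheet creates an intersection of order exactly $2n-1$, which is permitted to be unpaired. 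Your boundary-twisting idea is the right tool, but it belongs at this step rather than at the odd-order equality, and it must be supplemented by the twisted IHX construction to handle Whitney disks pairing two positive-order sheets.
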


To see why this is true, observe that it is easier to find a twisted Whitney tower than a framed one and hence there is a natural inclusion $\bW_{n} \subseteq \bW_{n}^\iinfty$ between our two filtrations $(\bW)$ and $(\bW^\iinfty)$ on the set of links. Moreover, by Definition~\ref{def:twisted-W-towers} we actually have 
\[
\bW_{2n-1}^\iinfty = \bW_{2n-1}
\]
because we found that a twisted Whitney disk of order~$n$ is most naturally associated to an order $2n$ Whitney tower, compare Remark~\ref{rem:motivate-twisted-towers}.  One then needs to show that indeed 
$\bW_{2n}^\iinfty \subseteq \bW_{2n-1}$, which is accomplished in Lemma~\ref{lem:boundary-twisted-IHX} of Section~\ref{sec:proof-twisted-thm} using boundary-twists and twisted Whitney moves. Finally, by the `Moreover' parts of Theorems \ref{thm:R-onto-G} and \ref{thm:twisted}, we may say that 
\[
\W_n = \bW_n/ \bW_{n+1}  \quad \text{ and } \quad \W^\iinfty_n = \bW^\iinfty_n/  \bW^\iinfty_{n+1} 
\]
which implies the exact sequence in Proposition~\ref{prop:exact sequence}. 

If our above conjectures above hold, then for every $n$ the various realization maps should lead to the analogous exact sequence for our groups defined by trees. The following result shows that this is indeed the case and even gives more information on kernels (respectively cokernels). It is best expressed in terms of the \emph{free quasi-Lie algebra} on $m$ generators:
\[
\sL'=\sL'(m)= \bigoplus_{n\in\N} \sL'_n
\]
\begin{defn}[compare \cite{L3}]\label{def:L'}
The abelian group $\sL'_{n+1}=\sL'_{n+1}(m)$ is generated by order~$n$ (trivalent, oriented) \emph{rooted} trees, each having a specified univalent vertex, labeled as {\em root}, and all other univalent vertices labelled by elements of $\{1,\dots,m\}$, modulo the AS and IHX relations of Figure~\ref{fig:ASandIHXtree-relations}.
\end{defn}
Here the prefix `quasi' reflects the fact that, although the IHX relation corresponds
to the Jacobi identity via the usual identification of rooted trees with non-associative brackets, the usual Lie algebra self-annihilation relation $[X,X]=0$ does 
{\em not} hold in $\sL'$. 
It is replaced by the weaker anti-symmetry (AS) relation $[Y,X]=-[X,Y]$.

\begin{thm}[\cite{CST4}]\label{thm:Tau-sequences}
For any $m,n\in\N$, there are short exact sequences
\[
\xymatrix{ 
0\ar[r] & \cT_{2n} \ar[r] &  \cT^\iinfty_{2n} \ar[r] & \Z_2 \otimes \sL'_{n+1} \ar[r] & 0
 } \]
and
\[
\xymatrix{ 
0\ar[r] &  \Z_2 \otimes \sL'_{n+1} \ar[r] & \widetilde\cT_{2n-1}  \ar[r] &  \cT^\iinfty_{2n-1} \ar[r] & 0
 } \]
\end{thm}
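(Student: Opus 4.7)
For the first sequence, $\cT^\iinfty_{2n}$ is presented by adjoining twisted generators $T^\iinfty$ (one for each rooted tree $T$ of order $n$) to $\cT_{2n}$, subject to the quadratic twist relation $(T_1+T_2)^\iinfty=T_1^\iinfty+T_2^\iinfty+\langle T_1,T_2\rangle$ together with the symmetry $(-T)^\iinfty=T^\iinfty$. The plan is to define the right-hand map by sending framed trees to $0$ and each $T^\iinfty$ to $T\bmod 2$: the quadratic relation respects this since $\langle T_1,T_2\rangle$ is a framed tree that maps to $0$, and the symmetry respects it since $-T\equiv T\pmod 2$. Surjectivity is immediate. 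Injectivity of the left-hand map is also immediate from the presentation: no defining relation involves only framed trees, so $\cT_{2n}$ embeds. For exactness in the middle, any element of $\cT^\iinfty_{2n}$ can be normalised via the quadratic relation to the form $x+T^\iinfty$ with $x\in\cT_{2n}$ and $T\in\sL'_{n+1}$; if this element vanishes mod $2$ then $T=2T'$, and the derived identity $2(T')^\iinfty=\langle T',T'\rangle\in\cT_{2n}$ (obtained by combining the symmetry with the quadratic relation at $T_1=T_2=T'$) forces the original element to lie in $\cT_{2n}$. This amounts to the statement that $\cT^\iinfty_{2n}$ is the universal symmetric quadratic refinement of the pairing $\langle\cdot,\cdot\rangle$, as established in \cite{CST4}.

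For the second sequence, the key observation is the inclusion $\im(\Delta_{2n-1})\subseteq\mathrm{BT}$, where $\mathrm{BT}\subseteq\cT_{2n-1}$ is the subgroup generated by boundary-twist trees (those with a central trivalent vertex carrying a label $i$ on one edge and two identical copies of a rooted tree $J$ of order $n-1$ on the other two). This inclusion is tautological, since $\Delta_{2n-1}(t)$ is by construction the sum of boundary-twist trees indexed by the univalent vertices of $t$. Writing $\widetilde\cT_{2n-1}=\cT_{2n-1}/\im(\Delta_{2n-1})$ and $\cT^\iinfty_{2n-1}=\cT_{2n-1}/\mathrm{BT}$, we get the right-hand surjection $\widetilde\cT_{2n-1}\twoheadrightarrow\cT^\iinfty_{2n-1}$ with kernel $\mathrm{BT}/\im(\Delta_{2n-1})$, reducing the whole task to identifying this kernel with $\Z_2\otimes\sL'_{n+1}$.

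I would define a map $\mathrm{BT}\to\Z_2\otimes\sL'_{n+1}$ sending the boundary-twist tree with label $i$ and rooted subtree $J$ to the bracket $[i,J]\bmod 2$. Because the AS symmetry of the two $J$ factors forces each boundary-twist generator to be $2$-torsion in $\cT_{2n-1}$, the subgroup $\mathrm{BT}$ is itself $2$-torsion and a $\Z_2$ target is forced. IHX relations applied inside $J$ translate directly to IHX on $[i,J]$ in $\sL'$, and every element of $\sL'_{n+1}$ is a sum of brackets $[i,J]$, establishing well-definedness and surjectivity. The critical identity is that $\Delta_{2n-1}$ lands in the kernel: its image is $\sum_v[\ell(v),t_v]\in\sL'_{n+1}$, which is the Jacobi identity applied to $t$ and hence vanishes by IHX in $\sL'$ (one sees this directly in low orders as $[i,j]+[j,i]=0$ for $n=1$ and the triple Jacobi for $n=2$). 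This yields the induced surjection $\mathrm{BT}/\im(\Delta_{2n-1})\twoheadrightarrow\Z_2\otimes\sL'_{n+1}$.

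The main obstacle is proving injectivity of this induced map. The strategy is a careful presentation-by-presentation comparison: $\mathrm{BT}$ is generated by pairs $(i,J)\in\{1,\dots,m\}\times\sL'_n$ modulo the AS/IHX relations inherited from $\cT_{2n-1}$, whereas $\Z_2\otimes\sL'_{n+1}$ is generated by brackets $[i,J]$ modulo AS and IHX in $\sL'$. Quotienting $\mathrm{BT}$ by $\im(\Delta_{2n-1})$ imposes exactly the derivation relations $\sum_v[\ell(v),t_v]=0$, and one must check that these together with the $\cT_{2n-1}$-level IHX generate all the rooted-tree IHX relations needed in $\sL'_{n+1}$. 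I would verify this by induction on the order of $J$, using at each step that an IHX move at an internal vertex of a boundary-twist tree splits it as a sum of boundary-twist trees matching the Jacobi pattern on the corresponding brackets. Making this combinatorial comparison rigorous is where the tree identities of \cite{CST3} enter, and where the bulk of the technical work lies.
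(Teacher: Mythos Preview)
This theorem is not proved in the present paper; it is stated with attribution to \cite{CST4}, and the only indication given here is that the proof uses the \emph{universality} of $\cT^\iinfty_{2n}$ as the target of symmetric quadratic refinements of the pairing $\langle\,\cdot\,,\,\cdot\,\rangle$ on $\sL'_{n+1}$. So there is no detailed argument in the paper to compare against, only that one-line hint.

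Your outline is broadly compatible with that hint, and for the second sequence your reduction to identifying $\mathrm{BT}/\im(\Delta_{2n-1})$ with $\Z_2\otimes\sL'_{n+1}$ is correct and is exactly how the problem is set up. You are also right that the injectivity step there is where the real work lies and that it ultimately rests on the combinatorics of \cite{CST3}.

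There is, however, a genuine gap in your treatment of the first sequence. The assertion that injectivity of $\cT_{2n}\to\cT^\iinfty_{2n}$ is ``immediate from the presentation: no defining relation involves only framed trees'' is not a valid argument. Even if every individual defining relation contains an $\iinfty$-generator, \emph{combinations} of such relations can eliminate all $\iinfty$-terms and produce relations purely among framed trees. For instance, combining the interior-twist relation $2J^\iinfty=\langle J,J\rangle$ with the twisted IHX relation yields the framed identity $\langle I,I\rangle=\langle H,H\rangle+\langle X,X\rangle-2\langle H,X\rangle$; one must check that every such consequence already holds in $\cT_{2n}$. A partial retraction $J^\iinfty\mapsto\tfrac12\langle J,J\rangle$ into $\cT_{2n}\otimes\Z[\tfrac12]$ only shows the kernel is $2$-primary torsion, and $\cT_{2n}$ does have $2$-torsion, so this is not enough. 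The actual argument in \cite{CST4} uses the universal-quadratic-refinement description in an essential way, and you effectively concede this at the end of the paragraph; but then the ``immediate'' claim should be withdrawn rather than left standing.

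A smaller point on the second sequence: your map $\mathrm{BT}\to\Z_2\otimes\sL'_{n+1}$, sending $\langle(i,J),J\rangle\mapsto[i,J]$, is being defined on a \emph{subgroup} of $\cT_{2n-1}$, so well-definedness requires that whenever a $\Z$-linear combination of boundary-twist generators vanishes in $\cT_{2n-1}$, the corresponding combination of brackets vanishes in $\Z_2\otimes\sL'_{n+1}$. Your remark about ``IHX inside $J$'' handles only the most obvious such relations; the full verification is again part of the nontrivial combinatorics you defer to \cite{CST3}.
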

This result is proved in \cite{CST4} using the \emph{universality} of $ \cT^\iinfty_{2n}$ as the target of quadratic refinements of the canonical `inner product' pairing 
\[
\langle \ , \ \rangle: \sL'_{n+1} \times \sL'_{n+1} \to \cT_{2n}
\]
given by gluing the roots of two rooted trees, see Definition~\ref{def:trees}.
We can connect these short exact sequences to a single exact sequence
\[
\xymatrix{ 
0\ar[r] & \cT_{2n} \ar[r] &  \cT^\iinfty_{2n} \ar[r] & \widetilde\cT_{2n-1}  \ar[r] &  \cT^\iinfty_{2n-1} \ar[r] & 0
 } \]
just like in  Proposition~\ref{prop:exact sequence}. However, Theorem~\ref{thm:Tau-sequences} makes the additional predictions that
\[
\Cok ( \W_{2n} \to  \W^\iinfty_{2n}) \cong \Z_2 \otimes \sL'_{n+1} \cong \Ker(  \W_{2n-1}  \to  \W^\iinfty_{2n-1} )
\]
assuming the conjectures in this paper. As a consequence of these conjectures, we would obtain new concordance invariants with values in $\Z_2 \otimes \sL'_{n+1}$ and defined on $\bW^\iinfty_{2n}$, as the obstructions for a link to bound a framed Whitney tower of order~$2n$. In \cite{CST4} we show that a quotient of $\Z_2 \otimes \sL'_{n+1}$, namely $\Z_2 \otimes \sL_{n+1}$,  is indeed detected by what we call {\em higher-order Sato-Levine invariants}. Here the free quasi-Lie algebra $\sL'$ is 
replaced by the usual \emph{free Lie algebra} 
$\sL$ satisfying the Jacobi identity and self-annihilation relations $[X,X]=0$. 

Levine showed in \cite{L2} that for odd $n$ the squaring map $X\mapsto [X,X]$ induces an isomorphism
$$
\Z_2 \otimes \sL_{\frac{n+1}{2}}
\cong\Ker(\Z_2 \otimes \sL'_{n+1}\twoheadrightarrow\Z_2 \otimes \sL_{n+1}) 
$$
We thus conjecture that this group is also detected by concordance invariants which are the above-mentioned {\em higher-order Arf invariants}. These would generalize the usual Arf invariants of the link components, which (as shown in \cite{CST2}) is the case $n=1$.

It is interesting to note that the case $n=0$ leads to the prediction
$$
\Cok ( \W_{0} \to  \W^\iinfty_{0}) \cong \Z_2 \otimes \sL_{1} \cong (\Z_2)^m
$$
This is indeed the group of framed $m$-component links modulo those with even framings! In fact, the consistency of this computation was the motivating factor to consider filtrations of the set of {\em framed} links $\bL$, rather than just oriented links.

As described in \cite{CST4}, the higher-order Sato-Levine invariants turn out to be determined by the Milnor invariants, providing a new interpretation for these classical invariants as obstructions to ``untwisting'' a Whitney tower. On the other hand, the higher-order Arf invariants do not appear to correspond to any known invariants. They take values in a (currently unknown) quotient of $\Z_2 \otimes \sL_{\frac{n+1}{2}}$ and we show in \cite{CST4} that together with the Milnor invariants they give a complete characterization of our three filtrations.

\subsection{Summary of all papers in this series}
Paper \cite{CST0} gives an overview of the results of this paper together with the closely related papers \cite{CST2,CST3,CST4,CST5}. The classifications of the geometric filtrations of link concordance defined in the current paper are achieved in a sequence of steps: 
\begin{enumerate}
\item The current paper extends our previous intersection theory of Whitney towers \cite{CST,ST2} to the reduced (and twisted) settings, and proves that our obstruction theory works: If the order $n$ intersection invariant of a (twisted) Whitney tower $\cW$ vanishes in $\widetilde{\cT}$
  (resp. $\cT^\iinfty$) then $\cW$ ``can be raised'' from order $n$ to order $n+1$, without changing the link $\partial \cW$. As a consequence, we obtain the realization epimorphisms: 
\[ 
\widetilde R_n: \widetilde\cT_n \sra\W_n \quad \mbox{and} \quad  R^\iinfty_{n}:\cT^\iinfty_n \sra \W^\iinfty_n
\] 
We also introduce the exact sequences which explain the relationship between
framed and twisted Whitney towers, motivating the definitions of the higher-order
Sato-Levine and higher-order Arf invariants.

\item In \cite{CST2} we first show that the order $n$ Milnor invariants for links can be thought of as an epimorphism
\[
\mu_n:\W^\iinfty_n \sra \sD_n:=\ker([\cdot,\cdot]:\sL_1\otimes \sL_{n+1}\to \sL_{n+2})
\]
Note that $\sD_n$ is a free abelian group whose rank can be computed via the Hall basis algorithm. Secondly, we use the geometric notion of {\em grope duality} to show that the composition
\[
\eta_n: \cT^\iinfty_n \overset{R^\iinfty_n}{\sra} \W^\iinfty_n \overset{\mu_n}{\sra} \sD_n
\]
can be described combinatorially in a very simple way: it is given by summing over all ways of choosing a root on a given tree. This map $\eta_n$ is closely related
to the procedure for converting a Whitney tower into a grope which gives the isomorphism $\W_n\cong\G_n$.
The construction of boundary links realizing the image of the higher-order Arf invariants leads to 
new geometric characterizations of links with vanishing Milnor invariants through
length $2n$.

\item In \cite{CST3} we use combinatorial Morse theory to prove the \emph{Levine Conjecture}, that a map $\eta_n'$ (analogous to $\eta_n$), which is also defined by summing over all 
root choices, gives an isomorphism $\cT_n \cong \sD'_n$,
where $\sD'_n$ is the bracket kernel on the free quasi-Lie algebra $\sL'$ (analogous to $\sD_n$). This result has implications in the study of $3$-dimensional homology cylinders, where Levine originally formulated his conjecture, as well as playing a key role in providing the algebraic framework for the classifications of $\W_n$ and $\W^\iinfty_n$ completed in \cite{CST4}.

\item In \cite{CST4}, we assemble the relevant groups into commutative diagrams of exact sequences and complete the classifications of the geometric filtrations of link concordance defined in the current paper. For instance, our resolution in \cite{CST3} of the Levine Conjecture is used to show that $\eta_n$ is an isomorphism for $n\equiv 0,1,3\mod 4$ and that its kernel is $\Z_2\otimes \sL_k$ for $n=4k-2$. 
This leads to the formulations of the higher-order Arf invariants in both the framed and twisted settings, as well as a demonstration of their equivalence, and their relation to the higher-order Sato-Levine invariants. The geometric filtrations are shown to be classified by the Milnor and higher-order Arf invariants.
This allows us to give complete proofs of Theorems~\ref{thm:even}, \ref{thm:4k-1} and \ref{thm:twisted} above.
 Moreover, we show in what sense the twisted intersection invariant $\tau_n^\iinfty$ is the universal quadratic refinement of its framed partner $\tau_n$, and complete the algebraic description of the relationship between the framed and twisted Whitney tower filtrations (Theorem~\ref{thm:Tau-sequences} above).
\item Further applications to filtrations of string links and homology cylinders are described in \cite{CST5}. 
\end{enumerate}

We emphasize that although the Milnor and higher-order Arf invariants completely classify the groups $\W_n$ and $\W^\iinfty_n$, there remains the question of 
determining the exact (finite) 2-torsion group which is the range of the higher-order Arf invariants, as touched on above and elaborated on throughout these papers.



{\bf Acknowledgments:} This paper was partially written while the first two authors were visiting the third author at the Max-Planck-Institut f\"ur Mathematik in Bonn. They all thank MPIM for its stimulating research environment and generous support. The exposition of this paper was significantly improved by a careful and insightful anonymous referee. The first author was also supported by NSF grant DMS-0604351 and the last author was also supported by NSF grants DMS-0806052 and DMS-0757312. The second author was partially supported by PSC-CUNY research grant PSCREG-41-386. 

\section{Whitney towers}\label{sec:w-towers}
We sketch here the relevant theory of Whitney towers as developed in \cite{CST,S1,ST2}, giving details for the new notion of \emph{twisted} Whitney towers. We work in the \emph{smooth oriented} category (with orientations usually suppressed from notation), even though all our results hold in the locally flat topological category by the basic results on topological immersions in Freedman--Quinn \cite{FQ}. In fact, it can be shown that the filtrations  $\mathbb G_n$, $\mathbb W_n$ and $\mathbb W^\iinfty_n$ are identical in the smooth and locally flat settings. This is because a topologically flat surface can be promoted to a smooth surface at the cost of only creating unpaired intersections of arbitrarily high order (see Remark~\ref{rem:locally-flat-and-smooth}).

\subsection{Operations on trees}\label{subsec:trees}
To describe Whitney towers it is convenient to use the bijective correspondence
between formal non-associative bracketings of elements from
the index set $\{1,2,3,\ldots,m\}$ and
rooted trees, trivalent and oriented as in Definition~\ref{def:Tau},
with each univalent vertex labeled by an element from the index set, except
for the \emph{root} univalent vertex which is left unlabeled. 

\begin{defn}\label{def:trees}
Let $I$ and $J$ be two rooted trees.
\begin{enumerate} 
\item The \emph{rooted product} $(I,J)$ is the rooted tree gotten
by identifying the root vertices of $I$ and $J$ to a single vertex $v$ and sprouting a new rooted edge at $v$.
This operation corresponds to the formal bracket (Figure~\ref{inner-product-trees-fig} upper right). The orientation of $(I,J)$ is inherited from those of $I$ and $J$ as well as the order in which they are glued.

\item The \emph{inner product}  $\langle I,J \rangle $ is the
unrooted tree gotten by identifying the roots of $I$ and $J$ to a single non-vertex point.
Note that $\langle I,J \rangle $ inherits an orientation from $I$ and $J$, and that
all the univalent vertices of $\langle I,J \rangle $ are labeled.
(Figure~\ref{inner-product-trees-fig} lower right.)

\item The \emph{order} of a tree, rooted or unrooted, is defined to be the number of trivalent vertices.
\end{enumerate}
\end{defn}
The notation of this paper will not distinguish between a bracketing and its corresponding rooted tree
(as opposed to the notation $I$ and $t(I)$ used in \cite{S1,ST2}).
In \cite{S1,ST2} the inner product is written as a dot-product, and the rooted product
is denoted by $*$.

\begin{figure}[ht!]
        \centerline{\includegraphics[scale=.40]{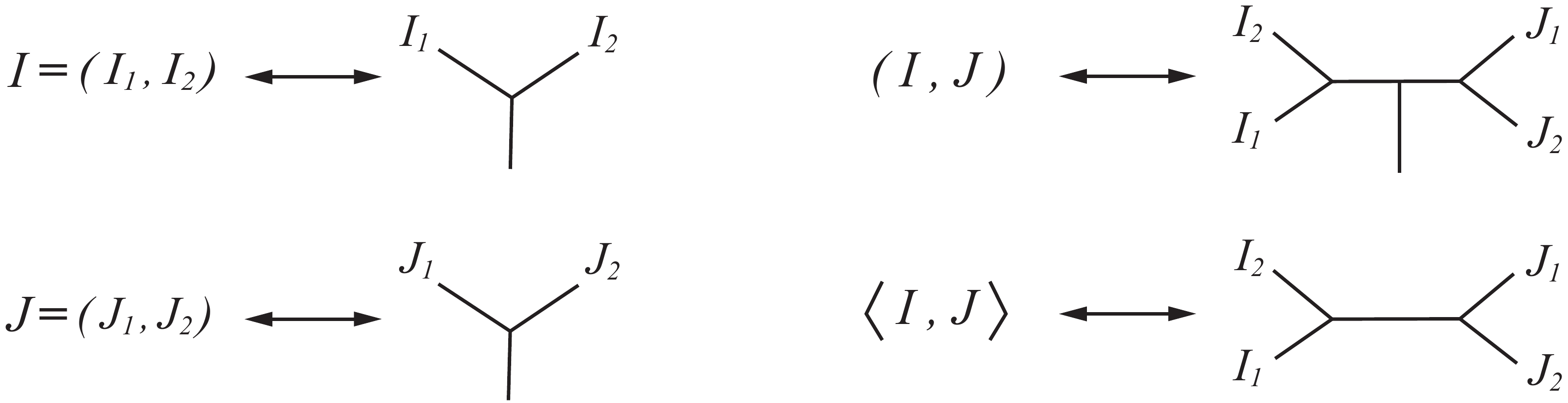}}
        \caption{The \emph{rooted product} $(I,J)$ and \emph{inner product} $\langle I,J \rangle$ of $I=(I_1,I_2)$ and $J=(J_1,J_2)$. All trivalent orientations correspond to a clockwise orientation of the plane.}
        \label{inner-product-trees-fig}
\end{figure}

\subsection{Whitney disks and higher-order intersections}\label{subsec:order-zero-w-towers-and-ints}
A collection $A_1,\ldots,A_m\looparrowright (M,\partial M)$ of 
connected surfaces in a $4$--manifold $M$ is a \emph{Whitney tower of order zero} if the $A_i$ are \emph{properly immersed} in the sense that the boundary is embedded in $\partial M$ and the interior is generically immersed in $M \smallsetminus \partial M$. 




To each order zero surface $A_i$ is associated
the order zero rooted tree consisting of an edge with one vertex labeled by $i$, and
to each transverse intersection $p\in A_i\cap A_j$ is associated the order zero
tree $t_p:=\langle i,j \rangle$ consisting of an edge with vertices labelled by $i$ and $j$. Note that
for singleton brackets (rooted edges) we drop the bracket from notation, writing $i$ for $(i)$.

The order 1 rooted Y-tree $(i,j)$, with a single trivalent vertex and two univalent labels $i$ and $j$,
is associated to any Whitney disk $W_{(i,j)}$ pairing intersections between $A_i$ and $A_j$. This rooted tree
can be thought of as being embedded in $M$, with its trivalent vertex and rooted
edge sitting in $W_{(i,j)}$, and its two other edges descending into $A_i$ and $A_j$ as sheet-changing paths. (The cyclic orientation at the trivalent vertex of the bracket  $(i,j)$  corresponds to an orientation of $W_{(i,j)}$ via a convention described below in \ref{subsec:w-tower-orientations}.)

Recursively, the rooted tree $(I,J)$ is associated to any Whitney disk $W_{(I,J)}$ pairing intersections
between $W_I$ and $W_J$ (see left-hand side of Figure~\ref{WdiskIJandIJKint-fig}); with the understanding that if, say, $I$ is just a singleton $i$, then $W_I$ denotes the order zero surface $A_i$. Note that a $W_{(I,J)}$ can be created by a finger move pushing $W_J$ through $W_I$.

To any transverse intersection $p\in W_{(I,J)}\cap W_K$ between $W_{(I,J)}$ and any
$W_K$ is associated the un-rooted tree $t_p:=\langle (I,J),K \rangle$  (see right-hand side of Figure~\ref{WdiskIJandIJKint-fig}).

\begin{figure}[ht!]
        \centerline{\includegraphics[width=120mm]{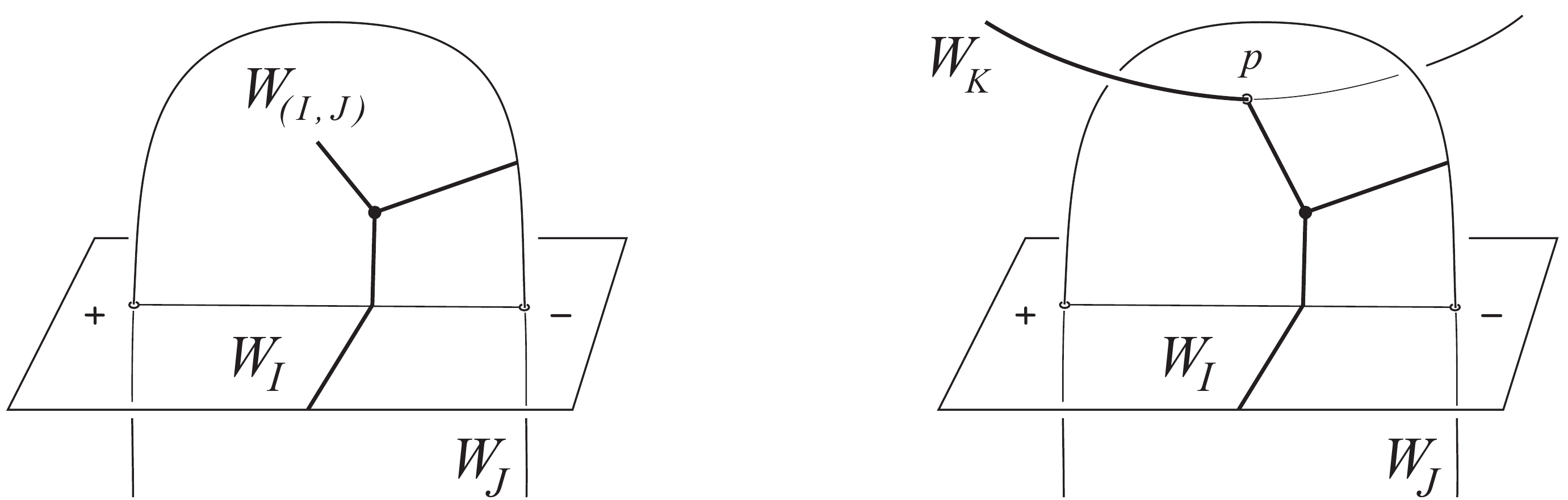}}
        \caption{On the left, (part of) the rooted tree $(I,J)$ associated to a Whitney disk $W_{(I,J)}$. On the right, (part of) the unrooted tree $t_p=\langle (I,J),K \rangle$ associated to an intersection $p\in W_{(I,J)}\cap W_K$. Note that $p$ corresponds to where the roots of $(I,J)$ and $K$ are identified to a (non-vertex) point in $\langle (I,J),K \rangle$.}
        \label{WdiskIJandIJKint-fig}
\end{figure}

\begin{defn}\label{def:int-and-Wdisk-order}
The \emph{order of a Whitney disk} $W_I$ is defined to be the order of the rooted tree $I$, and the \emph{order of a transverse intersection} $p$ is defined to be the order of the tree $t_p$.
\end{defn}

\begin{defn}\label{def:framed-tower}
A collection $\cW$ of properly immersed surfaces together with higher-order
Whitney disks is an \emph{order $n$ Whitney tower}
if $\cW$ contains no unpaired intersections of order less than $n$.  
\end{defn}
The Whitney disks in $\cW$ must have disjointly embedded boundaries, and generically immersed interiors.  All Whitney disks and order zero surfaces must also be \emph{framed} (as discussed next).

\subsection{Twisted Whitney disks and framings}\label{subsec:twisted-w-disks}
The normal disk-bundle of a Whitney disk $W$ in $M$ is isomorphic to $D^2\times D^2$,
and comes equipped with a canonical nowhere-vanishing \emph{Whitney section} over the boundary given by pushing $\partial W$  tangentially along one sheet and normally along the other, avoiding the tangential direction of $W$
(see Figure~\ref{Framing-of-Wdisk-fig}, and e.g.~1.7 of \cite{Sc}).
Pulling back the orientation of $M$ with the requirement that the normal disks
have $+1$ intersection with $W$ means the Whitney section determines
a well-defined (independent of the orientation of $W$)
relative Euler number $\omega(W)\in\Z$ which represents the obstruction to extending
the Whitney section across $W$. Following traditional terminology, when $\omega(W)$ vanishes $W$ is said to be \emph{framed}. (Since $D^2\times D^2$ has a unique trivialization up to homotopy, this terminology is only mildly abusive.)
In general when $\omega(W)=k$, we say that $W$ is
$k$-\emph{twisted}, or just \emph{twisted} if the value of $\omega(W)$ is not specified.
So a $0$-twisted Whitney disks is a framed Whitney disk.

\begin{figure}[ht!]
        \centerline{\includegraphics[width=120mm]{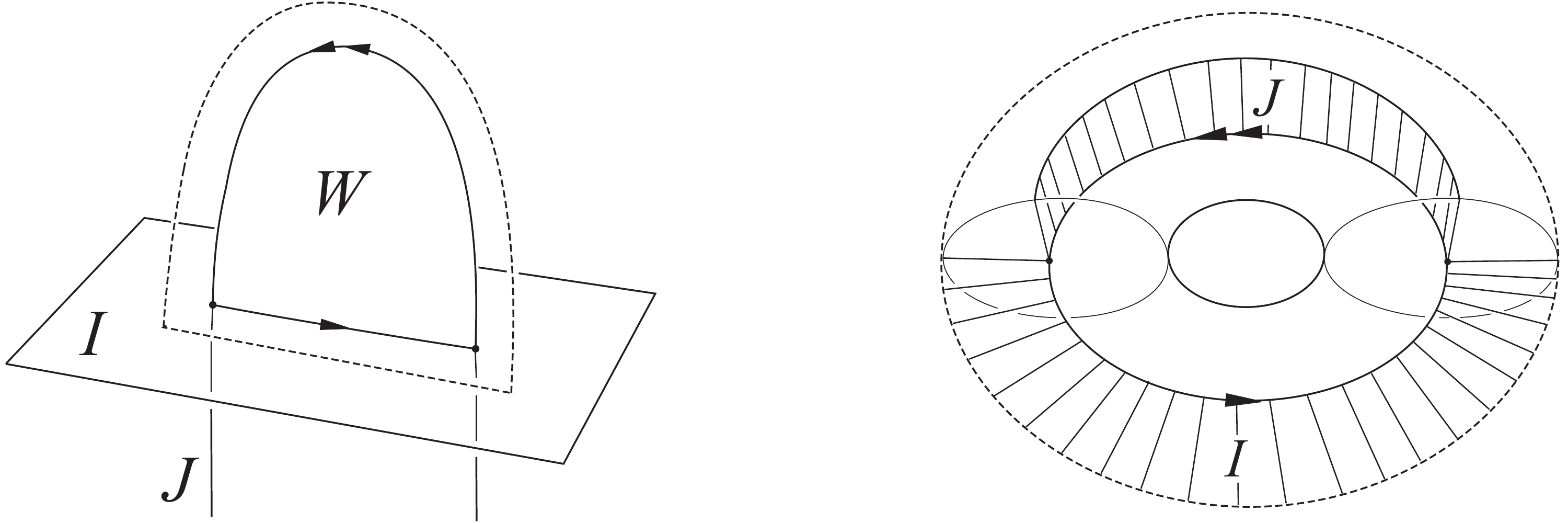}}
        \caption{The Whitney section over the boundary of a framed Whitney disk is
         indicated by the dotted loop shown on the left for a clean Whitney disk $W$ in
         a 3-dimensional slice of 4-space. On the right is shown an embedding into $3$--space of the normal
         disk-bundle over $\partial W$, indicating how the Whitney section determines a well-defined nowhere 		
         vanishing section which lies in the $I$-sheet and is normal to the $J$-sheet. }
        \label{Framing-of-Wdisk-fig}
\end{figure}

Note that a {\em framing} of $\partial A_i$ (respectively $A_i$) is by definition a trivialization of the normal bundle of the immersion. If the ambient $4$-manifold is oriented, this is equivalent to an orientation and a nonvanishing normal vector field on $\partial A_i$ (respectively $A_i$).
The twisting $\omega(A_i)\in\Z$ of an order zero surface is also defined when a framing of $\partial A_i$ is given, and $A_i$ is said to be \emph{framed} 
when $\omega(A_i)=0$.


\subsection{Twisted Whitney towers}\label{subsec:intro-twisted-w-towers}
In the definition of an order $n$ Whitney tower given just above (following \cite{CST,S1,S2,ST2})
all Whitney disks and order zero surfaces are required to be framed. It turns out that the natural generalization to twisted Whitney towers involves allowing twisted Whitney disks only in at least ``half the order'' as follows:

\begin{defn}\label{def:twisted-W-towers}
A \emph{twisted Whitney tower of order $0$} is a collection of properly immersed surfaces
in a $4$--manifold
(without any framing requirement).

For $n>0$, a \emph{twisted Whitney tower of order $(2n-1)$} is just a (framed) Whitney
tower of order $(2n-1)$ as in Definition~\ref{def:framed-tower} above.

For $n>0$, a \emph{twisted Whitney tower of order $2n$} is a Whitney
tower having all intersections of order less than $2n$ paired by
Whitney disks, with all Whitney disks of order less than $n$ required to be framed, but Whitney disks of order at least $n$ allowed to be twisted.
\end{defn}

\begin{rem}\label{rem:framed-is-twisted}
Note that, for any $n$, an order $n$ (framed) Whitney tower
is also an order $n$ twisted Whitney tower. We may
sometimes refer to a Whitney tower as a \emph{framed} Whitney tower to emphasize
the distinction, and will always use the adjective ``twisted'' in the setting of
Definition~\ref{def:twisted-W-towers}.
\end{rem}

\begin{rem}\label{rem:motivate-twisted-towers}
The convention of allowing only order $\geq n$ twisted Whitney disks in order $2n$ twisted Whitney towers is explained both algebraically and geometrically in \cite{CST2}. In any event, an order $2n$ twisted Whitney tower can always be modified
so that all its Whitney disks of order $>n$ are framed, so the twisted Whitney disks of order equal to $n$ are the important ones.  
\end{rem}


 \subsection{Whitney tower orientations}\label{subsec:w-tower-orientations}

Orientations on order zero surfaces in a Whitney tower $\cW$ are fixed, and required to induce the orientations
on their boundaries.
After choosing and fixing orientations on all the Whitney disks in
$\cW$, the associated trees 
are embedded in $\cW$ so that the vertex orientations are induced from
the Whitney disk orientations, with the descending edges of each
trivalent vertex enclosing the \emph{negative intersection point} of the corresponding Whitney disk, as in Figure~\ref{WdiskIJandIJKint-fig}.
(In fact, if a tree $t$ has more than one trivalent vertex which corresponds to the same Whitney disk, then
$t$ will only be immersed in $\cW$, but this immersion can be taken to be a local embedding around each trivalent vertex of $t$
as in Figure~\ref{WdiskIJandIJKint-fig}.)

This ``negative corner'' convention, which differs from the
positive corner convention in \cite{CST,ST2}, will turn out to be
compatible with commutator conventions for use in \cite{CST2}.

With these conventions, different choices of orientations on Whitney disks in $\cW$ correspond to anti-symmetry relations (as explained in \cite{ST2}).

  
\subsection{Intersection invariants for Whitney towers}\label{subsec:intro-w-tower-int-invariants}
We recall from Definition~\ref{def:Tau} that the abelian group $\cT_n$ is the free abelian group on labeled
vertex-oriented order $n$ trees, modulo the AS and IHX relations, see Figure~\ref{fig:ASandIHXtree-relations}.
The obstruction theory of \cite{ST2} in the current simply connected setting works as follows.
\begin{defn}
The \emph{order $n$ intersection invariant} $\tau_n(\cW)$ of an order
$n$ Whitney tower $\cW$ is defined to be
$$
\tau_n(\cW):=\sum \epsilon_p\cdot t_p \in\cT_n
$$ 
where the sum is over all order $n$ intersections $p$,
with
$\epsilon_p=\pm 1$ the usual sign of a transverse intersection
point.
\end{defn}

As stated in Theorem~\ref{thm:raise} in the introduction, 
if $L$ bounds $\cW\subset B^4$ with $\tau_n(\cW)=0\in \cT_n$, then $L$ bounds a Whitney tower
of order $n+1$. This is a special case of the simply connected version of the more general Theorem~2 of \cite{ST2}. 
We will use the following version of Theorem~2 of \cite{ST2}
where the order zero surfaces are either properly immersed disks in 
$B^4$ or properly immersed annuli in $S^3\times I$:  

\begin{thm}[\cite{ST2}]\label{thm:framed-order-raising-on-A}
If a collection $A$ of properly immersed surfaces in a simply connected $4$--manifold supports an order $n$ Whitney tower $\cW$ with $\tau_n(\cW)=0\in\cT_n$, then $A$ is regularly homotopic (rel $\partial$) to 
$A'$ which supports an order $n+1$ Whitney tower.
\end{thm}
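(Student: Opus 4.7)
The plan is to geometrically realize the algebraic relations defining $\cT_n$ on the Whitney tower $\cW$ via modifications that preserve the regular homotopy class of $A$ (rel~$\partial$), and then use the simply-connected hypothesis to pair up the remaining order~$n$ intersections by Whitney disks. Since $\tau_n(\cW)=\sum \epsilon_p\, t_p$ vanishes in $\cT_n$, the signed sum of trees equals zero in the free abelian group on order~$n$ trees modulo the AS and IHX relations; the strategy is to convert this algebraic cancellation into a geometric one.

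First I would realize the AS relation geometrically: reversing the chosen orientation of a Whitney disk $W_I$ interchanges the role of the two paired sheets, which swaps the two subtrees incident to the corresponding trivalent vertex and negates the associated trees in the sum for $\tau_n(\cW)$. Next, and most importantly, I would invoke the geometric IHX construction of \cite{CST}: a controlled sequence of finger moves, Whitney moves, and re-pairings on $\cW$ replaces a single order~$n$ intersection $p$ with a cancelling triple of order~$n$ intersections whose trees satisfy the Jacobi relation, together with additional intersections of order strictly greater than $n$. Because these moves are supported in small neighborhoods within $\cW$ (and the finger-move/Whitney-move pairs are regular-homotopy inverses on the underlying immersed order zero surfaces), the regular homotopy class of $A$ rel~$\partial$ is preserved. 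Applying these AS and IHX moves according to the sequence of relations expressing $\tau_n(\cW)=0$ reduces us to the case where the order~$n$ intersections of $\cW$ occur in algebraically cancelling pairs $\{p,p'\}$ with $t_p=t_{p'}$ and $\epsilon_p=-\epsilon_{p'}$.

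Each such pair is then paired by a new Whitney disk. Specifically, the Whitney circle formed by arcs in the two Whitney disks (or order zero sheets) at $p$ and $p'$ is null-homotopic by the simply-connected hypothesis, and transversality produces a generically immersed Whitney disk $W_{(I,J)}$ pairing $p$ with $p'$ (its boundary can be made disjoint from previously chosen Whitney disk boundaries by general position). This new Whitney disk need not be framed, but a sequence of boundary twists adjusts $\omega(W_{(I,J)})$ to zero at the expense of creating self-intersections of $W_{(I,J)}$; these new intersections have order at least $n+1$ and therefore do not obstruct the order~$n+1$ Whitney tower structure. After performing this construction for every cancelling pair, every intersection of order $\leq n$ in the resulting Whitney tower $\cW'$ is paired by a framed Whitney disk, so $\cW'$ has order $n+1$ by definition, and $A'=\partial\cW'$ (at the order zero level) is regularly homotopic to $A$ rel~$\partial$.

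The main obstacle is the geometric realization of the IHX relation, since AS and the cancelling-pair step are essentially bookkeeping once the Jacobi identity is available at the Whitney-disk level. The IHX maneuver requires a delicate choreography of finger moves along sheet-changing paths to produce the three intersections with trees of I-, H-, and X-shape, with all error terms strictly increasing the order. This is precisely the technical content isolated in \cite{CST}, and the argument above is the standard assembly of that input with the simply-connected Whitney disk construction and a twisting adjustment to framings.
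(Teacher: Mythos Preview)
Your proposal follows the paper's strategy through the first stage (realizing AS and IHX relators to arrange that all order~$n$ trees in $t(\cW)$ occur in algebraically cancelling pairs), but the final step contains a real gap. You assert that an algebraically cancelling pair $\{p,p'\}$ with $t_p=t_{p'}$ and $\epsilon_p=-\epsilon_{p'}$ can be paired by a single Whitney disk, because the Whitney circle ``formed by arcs in the two Whitney disks at $p$ and $p'$'' bounds. But this presupposes that $p$ and $p'$ lie in the \emph{same} pair of surfaces $W_I\cap W_J$; in general they do not. As the paper notes explicitly in its discussion of algebraic versus geometric cancellation, isomorphic trees may have their trivalent vertices in different Whitney disks: $p\in W_{(I,J)}\cap W_K$ and $p'\in W'_{(I,J)}\cap W'_K$ with $W'_{(I,J)}\neq W_{(I,J)}$, etc. No Whitney circle joins such points, and the simply-connected hypothesis buys you nothing here.

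The paper's (outlined) proof inserts two additional steps you are missing: first, the \emph{Whitney-move IHX} construction of \cite{S1} is used to simplify the shapes of the algebraically cancelling trees (so that the relevant trivalent vertices can be arranged to sit in prescribed Whitney disks), and then the controlled homotopies of Section~4.5 of \cite{ST2} convert simple algebraic cancellation into actual geometric cancellation before any new Whitney disks are chosen. Only after this conversion does one obtain pairs $\{p,p'\}$ in a common $W_I\cap W_J$ that admit a pairing Whitney disk. (A minor side point: boundary-twisting $W_{(I,J)}$ does not create \emph{self}-intersections of $W_{(I,J)}$ but rather intersections with one of the sheets it pairs; your order estimate survives, but the mechanism you describe is not quite the right one.)
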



\subsection{Intersection invariants for twisted Whitney towers}\label{subsec:intro-twisted-w-tower-int-invariants}

The intersection invariants for Whitney towers are extended to
twisted Whitney towers as follows:

\begin{defn}\label{def:T-infty-odd}
The abelian group $\cT^{\iinfty}_{2n-1}$ is the quotient of $\cT_{2n-1}$ by the \emph{boundary-twist relations}: 
\[
\langle  (i,J),J \rangle \,=\, i\,-\!\!\!\!\!-\!\!\!<^{\,J}_{\,J}\,\,=\,0
\] 
Here $J$ ranges over all order $n-1$ rooted trees.
(This is the same as taking the quotient of $\widetilde{\cT}_{2n-1}=\cT_{2n-1}/\im(\Delta_{2n-1})$ by boundary-twist relations since $\im(\Delta_{2n-1})$ is contained in the span of boundary-twist relations -- see Section~\ref{sec:proof-thm-odd}).
\end{defn}

The boundary-twist relations
correspond geometrically to the fact that 
performing a boundary twist (Figure~\ref{boundary-twist-and-section-fig}) on an order $n$ Whitney disk $W_{(i,J)}$ creates an order $2n-1$ intersection point
$p\in W_{(i,J)}\cap W_J$ with associated tree $t_p=\langle  (i,J),J \rangle $ (which is 2-torsion
by the AS relations) and changes $\omega (W_{(i,J))})$ by $\pm1$. Since order $n$ twisted Whitney disks are allowed in an order $2n$ Whitney tower such trees do not represent obstructions to the existence of the next order twisted tower.

For any rooted tree $J$ we define the corresponding {\em $\iinfty$-tree}, denoted by $J^\iinfty$, by labeling the root univalent vertex with the symbol ``$\iinfty$'':
$$
J^\iinfty := \iinfty\!-\!\!\!- J 
$$ 


\begin{defn}\label{def:T-infty-even}
The abelian group $\cT^{\iinfty}_{2n}$ is the free abelian group on order $2n$
trees and order $n$ $\iinfty$-trees, modulo the
following relations:
\begin{enumerate}
     \item AS and IHX relations on order $2n$ trees (Figure~\ref{fig:ASandIHXtree-relations})
   \item \emph{symmetry} relations: $(-J)^\iinfty = J^\iinfty$
  \item \emph{twisted IHX} relations: $I^\iinfty=H^\iinfty+X^\iinfty- \langle H,X\rangle $
   \item {\em interior twist} relations: $2\cdot J^\iinfty=\langle J,J\rangle $
\end{enumerate}
\end{defn}

Here the AS and IHX relations are as usual, but they only apply to non-$\iinfty$ trees. 
The \emph{symmetry relation} corresponds to the fact that the relative 
Euler number $\omega(W)$ is independent of the orientation of the Whitney disk $W$, with $-J$ denoting the ``opposite'' orientation of $J$ (meaning that the trivalent orientations differ at an odd number of vertices).
The \emph{twisted IHX relation} corresponds to the effect of performing a Whitney move in the presence of a twisted Whitney disk, as described below in
Lemma~\ref{lem:twistedIHX}. The \emph{interior-twist relation} corresponds to the fact that creating a 
$\pm1$ self-intersection
in a $W_J$ changes the twisting by $\mp 2$ (Figure~\ref{InteriorTwistPositiveEqualsNegative-fig}).
\vspace{.25 in}

\begin{rem}\label{rem:quadratic-form}
The symmetry, twisted IHX, and interior twist relations in $\mathcal T^\iinfty_{2n}$ have a surprisingly natural algebraic interpretation that we explain in \cite{CST4}. The idea is to extend the map $J\mapsto J^\iinfty$ to a {\em symmetric quadratic refinement} $q$ of the bilinear form $\langle \cdot,\cdot\rangle$ on the free quasi-Lie algebra of rooted trees (the intersection form on Whitney disks) by defining $q(J)=J^\iinfty$ and extending to linear combinations by the formula
\[
q(J+K):=J^\iinfty+K^\iinfty+\langle J,K\rangle
\]
Expanding $q(I-H+X)=0$ leads to the 6-term IHX relation
\[
I^\iinfty+H^\iinfty+X^\iinfty=\langle I,H \rangle-\langle I,X \rangle+\langle H,X \rangle
\]
which is equivalent to the twisted IHX relation in the presence of the interior-twist relations. Those in turn follow by setting $K:=-J$ from the symmetry relation.
\end{rem}

\begin{rem}\label{rem:finite-type-IHX}

We discovered in \cite{CST} that the (framed) IHX relation can be realized in three dimensions as well as four, and
it is interesting to note that many of the relations that we obtain for twisted Whitney towers in four dimensions can also be realized by rooted clasper surgeries (grope cobordisms) in three dimensions. Here the twisted Whitney disk corresponds to a $\pm1$ framed leaf of  a clasper.   For example the relation $I^\iinfty=H^\iinfty+X^\iinfty-\langle H,X\rangle$ has the following clasper explanation. $I^\iinfty$ represents a clasper with one isolated twisted leaf. By the topological IHX relation, one can replace $I^\iinfty$ by two claspers of the form $H^\iinfty$ and $(-X)^\iinfty=X^\iinfty$ embedded in a regular neighborhood of the original clasper with leaves parallel to the leaves of the original. The twisted leaves are now linked together, so applying Habiro's zip construction (which complicates the picture considerably) one gets three tree claspers, of the form 
$H^\iinfty$, $X^\iinfty$ and $\langle H,-X\rangle$ respectively. 

Similarly, the relation $2\cdot J^\iinfty=\langle J,J\rangle$ has an interpretation where one takes a clasper which represents $J^\iinfty$ and splits off a geometrically cancelling parallel copy, representing the tree $J^\iinfty$. Again, because the twisted leaves link, we also get the term $\langle J,-J\rangle.$ 

These observations will be enlarged upon in \cite{CST5} to analyze filtrations on homology cylinders.
\end{rem}

Recall from Definition~\ref{def:twisted-W-towers} (and Remark~\ref{rem:motivate-twisted-towers}) that twisted Whitney disks
only occur in even order twisted Whitney towers, and only those of half-order are
relevant to the obstruction theory. 
\begin{defn}\label{def:tau-infty}
The \emph{order $n$ intersection intersection invariant}
$\tau_{n}^{\iinfty}(\cW)$ of an order
$n$ twisted Whitney tower $\cW$ is defined to be
$$
\tau_{n}^{\iinfty}(\cW):=\sum \epsilon_p\cdot t_p + \sum \omega(W_J)\cdot J^\iinfty\in\cT^{\iinfty}_{n}
$$
where the first sum is over all order $n$ intersections $p$ and the second sum is over all order $n/2$
Whitney disks $W_J$ with twisting $\omega(W_J)\in\Z$. For $n=0$, recall from \ref{subsec:order-zero-w-towers-and-ints} above our notational convention that $W_j$ denotes $A_j$, and that $\omega(A_j)\in\Z$ is the relative Euler number of the normal bundle of $A_j$ with respect to the given framing of $\partial A_j$ as in \ref{subsec:twisted-w-disks} .

\end{defn}

By splitting the twisted Whitney disks, as explained in subsection~\ref{subsec:split-w-towers} below, 
for $n>0$ we may actually assume that all non-zero $\omega(W_J)\in\{\pm 1\}$, just like the signs $\epsilon_p$. 


As in the framed case, the vanishing of $\tau_{n}^{\iinfty}$ is sufficient for the existence of a 
twisted Whitney tower of order $(n+1)$, and the proof  of Theorem~\ref{thm:twisted} in Section~\ref{sec:proof-twisted-thm} will be based on the following analogue of the framed order-raising Theorem~\ref{thm:framed-order-raising-on-A} to the twisted setting: 
\begin{thm}\label{thm:twisted-order-raising-on-A}
If a collection $A$ of properly immersed surfaces in a simply connected $4$--manifold supports an order $n$ twisted Whitney tower $\cW$ with $\tau_n^\iinfty(\cW)=0\in\cT^\iinfty_n$, then $A$ is regularly homotopic (rel $\partial$) to 
$A'$ which supports an order $n+1$ twisted Whitney tower.
\end{thm}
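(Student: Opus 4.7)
The approach is to extend the proof of the framed order-raising theorem (Theorem~\ref{thm:framed-order-raising-on-A}) from \cite{ST2} by geometrically realizing each of the additional defining relations of $\cT^\iinfty_n$. The plan is to realize each such relation as a controlled local move on a twisted Whitney tower that stays within the regular homotopy class of $A$ rel~$\partial$, and then to apply the hypothesis $\tau^\iinfty_n(\cW)=0\in\cT^\iinfty_n$ to reduce the order $n$ obstructions to cancelling pairs, which can be eliminated by local constructions.

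First, I would split $\cW$ (subsection~\ref{subsec:split-w-towers}) so that each Whitney disk carries at most one interior intersection and one unit of twisting, converting $\tau^\iinfty_n(\cW)$ to a formal signed sum of trees and $\iinfty$-trees, one per feature of $\cW$. Next I would establish a geometric realization for each relation in Definition~\ref{def:T-infty-odd} and Definition~\ref{def:T-infty-even}. The AS and IHX relations on unrooted trees are realized by Whitney-disk orientation changes and the four-dimensional IHX move sequence from \cite{CST}, exactly as in the framed case; these only involve untwisted Whitney disks. The symmetry relation $(-J)^\iinfty=J^\iinfty$ is automatic since $\omega(W_J)$ is orientation-independent. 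The boundary-twist relation $\langle(i,J),J\rangle=0$ is realized by an (inverse) boundary twist on an order $k$ Whitney disk $W_{(i,J)}$: an unpaired intersection with tree $\langle(i,J),J\rangle$ is traded for one unit of twisting on $W_{(i,J)}$, and the resulting twisted Whitney disk of order $k$ is permitted in an order $2k$ twisted tower. The twisted IHX relation $I^\iinfty=H^\iinfty+X^\iinfty-\langle H,X\rangle$ is realized by Lemma~\ref{lem:twistedIHX}, a twisted Whitney move converting a twisted Whitney disk of type $I$ into twisted Whitney disks of types $H$ and $X$ together with an unrooted intersection tree $\langle H,X\rangle$. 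The interior-twist relation $2J^\iinfty=\langle J,J\rangle$ is realized by cusp-homotoping a $\pm 1$ self-intersection into $W_J$, which shifts $\omega(W_J)$ by $\mp 2$ and contributes a tree $\langle J,J\rangle$.

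The vanishing $\tau^\iinfty_n(\cW)=0\in\cT^\iinfty_n$ then means that the formal sum of trees and $\iinfty$-trees representing $\cW$ reduces to zero through a finite sequence of the defining relations. Applying the matching geometric moves produces, after a regular homotopy rel~$\partial$, a twisted Whitney tower $\cW'$ on some $A'$ of order $n$ whose order $n$ intersections and order $n/2$ twistings are grouped into algebraically cancelling pairs: opposite sign intersections on the same tree are eliminated by the standard Whitney disk construction, and paired $\pm 1$ twistings on the same tree are eliminated by the \emph{geometric cancellation of twisted Whitney disks} construction to be developed in Section~\ref{sec:proof-twisted-thm}. The outcome is an order $n+1$ twisted Whitney tower on $A'$, as required.

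The main obstacle is keeping careful track of how the geometric moves interact and what they do to the underlying regular homotopy class of $A$. Each boundary twist and each twisted Whitney move creates new intersections (respectively $\langle(i,J),J\rangle$ and $\langle H,X\rangle$) of the \emph{same} order $n$, so one must check that these cumulative contributions exactly match the algebraic reduction in $\cT^\iinfty_n$ and do not spoil the lower-order structure of the tower. The geometric cancellation of paired twistings is particularly delicate, since it requires constructing a parallel copy of a twisted Whitney disk such that twistings cancel while the new intersections produced between the two copies contribute only to order $n+1$. Organizing these moves compatibly, and verifying that the whole procedure can be carried out inside a simply connected ambient $4$--manifold without introducing uncontrollable new obstructions, is the technical heart of the argument.
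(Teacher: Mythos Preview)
Your overall strategy matches the paper's: split the tower, realize each relator in $\cT^\iinfty_n$ by a local geometric move, reduce $t(\cW)$ to algebraically cancelling pairs, and then convert those to geometric cancellation. The realizations you list for AS, IHX, symmetry, boundary-twist, twisted IHX, and interior-twist are exactly the ones the paper uses, and your separation into ``non-$\iinfty$ trees handled as in \cite{ST2}'' versus ``$\iinfty$-trees handled by a new cancellation construction'' is the right dichotomy.

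The genuine gap is in your last step, the geometric cancellation of an algebraically cancelling pair of clean $\pm 1$-twisted Whitney disks $W_J$ and $W'_J$. Your description of this as ``constructing a parallel copy of a twisted Whitney disk such that twistings cancel'' is not what is required and would not work as stated. The two Whitney disks $W_J$ and $W'_J$ in general pair \emph{different} canceling pairs of intersection points, living over entirely different sub-towers; there is no sense in which one is a parallel copy of the other, and simply banding their top stages would not produce a Whitney disk. The paper's construction (adapted from Chapter~10.8 of \cite{FQ}) is an \emph{iterated} procedure that works up through the trees from the bottom: at each stage one bands together the two order~$r$ Whitney disks corresponding to the $r$th trivalent vertex of $J$ and of $J'$ using an auxiliary Whitney disk $V$, and one must separately clean up $V$ (kill its twisting and intersections) using boundary-twists, push-downs, and Whitney-parallel copies of the existing sub-tower. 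Crucially, this iterated banding only gets started, and only propagates, because at each stage the relevant lower sheet is \emph{connected}; this is why the paper first applies Lemma~\ref{lem:twistedIHX} repeatedly to arrange that every $\iinfty$-tree is \emph{simple} (right-normed) with the $\iinfty$-label at one end. You do not mention this reduction to simple trees, and without it the cancellation construction cannot be carried out. This is the technical heart of the even-order case and is substantially more delicate than your sketch suggests.
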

The proof of Theorem~\ref{thm:twisted-order-raising-on-A} is given in Section~\ref{sec:proof-twisted-thm} below.

Proofs of the ``order-raising'' Theorems \ref{thm:twisted-order-raising-on-A} and \ref{thm:framed-order-raising-on-A} (and its strengthening Theorem~\ref{thm:framed-order-raising-mod-Delta} below) depend on
realizing the relations in the target groups by controlled manipulations of Whitney towers. The next two subsections introduce combinatorial
notions useful for describing the algebraic effect of such geometric constructions. 

\emph{For the rest of this section we assume our Whitney towers are of positive order for convenience of notation.}

\subsection{Intersection forests}\label{subsec:int-forests}
Recall that the trees associated to intersections and Whitney disks in a Whitney tower can be considered to be immersed in the Whitney tower, with vertex orientations induced by the Whitney tower orientation, as in Figure~\ref{WdiskIJandIJKint-fig}. 

\begin{defn}\label{def:intersection forests}
The \emph{intersection forest} $t(\cW)$ of a framed Whitney tower $\cW$ is the disjoint union of signed trees associated to all unpaired intersections $p$ in $\cW$: 
 \[
 t(\cW)=\amalg_p\ \epsilon_p \cdot  t_p
 \]
with $\epsilon_p$ the sign of the intersection point $p$.
For $\cW$ of order~$n$, we can think of the signed order $n$ trees in $t(\cW)$ as an ``abelian word'' in the generators $\pm t_p$ which represents 
$\tau_n(\cW)\in\cT_n$.  
More precisely, $t(\cW)$ is an element of the free abelian monoid, with unit $\emptyset$, generated by (isomorphism classes of) signed trees, trivalent, labeled and vertex-oriented as usual. We emphasize that there are no cancellations or other relations here. 

\begin{rem}
In the older papers \cite{CST,S1,ST2} we referred to $t(\cW)$ as the ``geometric intersection tree'' (and to the group element
$\tau_n(\cW)$ as the order $n$ intersection ``tree'', rather than ``invariant''), but the term ``forest'' better describes
the disjoint union of (signed) trees $t(\cW)$.
\end{rem}

Similarly to the framed case, the \emph{intersection forest} $t(\cW)$ of a {\em twisted} Whitney tower $\cW$ is the disjoint union of signed trees associated to all unpaired intersections $p$ in $\cW$ and integer-coefficient $\iinfty$-trees associated to all non-trivially twisted Whitney disks $W_J$ in $\cW$:
\[
t(\cW)=\amalg_p \ \epsilon_p \cdot  t_p \,\, + \amalg_J \ \omega(W_J)\cdot  J^\iinfty
\]
with $\omega(W_J)\in\Z$ the twisting of $W_J$. Again, there are no cancellations or relations (and the informal ``$+$'' sign in the expression is purely cosmetic).   
\end{defn}
We will see in the next subsection that all the trees can be made to be disjoint in $\cW$, with all non-zero $\omega(W_J)=\pm 1$, so that $t(\cW)$ is also a topological disjoint union which corresponds to an element in the free abelian monoid generated by (isomorphism classes of) signed trees, 
and signed $\iinfty$-trees.

\subsection{Splitting twisted Whitney towers}\label{subsec:split-w-towers}
A framed Whitney tower is \emph{split} if the set of singularities in the interior
of any Whitney disk consists of either a single point, or a single boundary arc of a Whitney disk, or is empty.
This can always be arranged, as observed in Lemma~13 of \cite{ST2} (Lemma~3.5 of \cite{S1}), by performing finger moves along Whitney disks guided by arcs
connecting the Whitney disk boundary arcs. Implicit in this construction is that the finger moves preserve the Whitney disk framings
(by not twisting relative to the Whitney disk that is being split -- see Figure~\ref{twist-split-Wdisk-fig}).
A Whitney disk $W$ is \emph{clean} if the interior of $W$ is embedded and disjoint from the rest of the Whitney tower.
In the setting of twisted Whitney towers, it
will simplify the combinatorics to use ``twisted'' finger moves to similarly split-off twisted Whitney disks 
into $\pm 1$-twisted
clean Whitney disks.

We call a twisted Whitney tower \emph{split} if all of its non-trivially twisted Whitney disks are clean and have twisting 
$\pm 1$, and all of its framed Whitney disks are split in the usual sense (as for framed Whitney towers).
\begin{lem}\label{lem:split-w-tower}
If $A$ supports an order $n$ twisted Whitney tower $\cW$, then $A$ is homotopic (rel $\partial$) to 
$A'$ which supports a split order $n$ twisted
Whitney tower $\cW'$, such that:
\begin{enumerate}
\item The disjoint union of non-$\iinfty$ trees $\amalg_p \ \epsilon_p \cdot  t_p \subset t(\cW)$ is isomorphic to 
the disjoint union of non-$\iinfty$ trees $\amalg_{p'} \ \epsilon_{p'} \cdot  t_{p'} \subset t(\cW')$.
\item Each $\omega(W_J)\cdot  J^\iinfty$ in $t(\cW)$ gives rise to the disjoint union of exactly $|\omega(W_J) |$-many $\pm 1\cdot J^\iinfty$ in $\cW'$, 
where the sign $\pm$ corresponds to the sign of $\omega(W_J)$.
\end{enumerate}
\end{lem}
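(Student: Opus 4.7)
The plan is to proceed by induction on the order of Whitney disks, working from the top of the tower downward so that modifications at a given level leave the already-split higher-order portion undisturbed. Two local moves will suffice. Move (A) is an arc-guided finger move performed \emph{along} a given Whitney disk $W$, used to separate pre-existing singularities inside $W$ into two sub-disks without creating new intersections and without altering any twistings; this is the standard splitting move of Lemma 13 of \cite{ST2} (equivalently Lemma 3.5 of \cite{S1}). Move (B) is a parallel-copy splitting performed in a small regular neighborhood of a \emph{clean} $k$-twisted Whitney disk $W_J$ with $k\neq 0$, which replaces $W_J$ by two disjoint clean Whitney disks of twistings $\sign(k)$ and $k-\sign(k)$ pairing parallel copies of the original intersection pair.

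With these moves in hand, the construction proceeds in three stages. First, I apply move (A) repeatedly to every framed Whitney disk in $\cW$ until each is split in the usual sense. Next, I apply move (A) in the same manner to each twisted Whitney disk $W_J$, pushing every interior singularity of $W_J$ into a clean framed sub-disk; since move (A) preserves twistings, this leaves a single clean residual $k$-twisted disk, still denoted $W_J$. Finally, I iterate move (B) a total of $|k|-1$ times on each residual clean $W_J$: each iteration performs a finger move of one paired sheet through the other near $\partial W_J$, producing a new cancelling pair of intersections of tree type $J$, and pairs them with a thin parallel Whitney disk $W'_J$ carrying exactly one unit of twisting $\sign(k)$, while the remaining sliver of $W_J$ drops to twisting $k-\sign(k)$. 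The end result is $|k|$ disjoint clean Whitney disks of twisting $\sign(k)$, each carrying the same underlying rooted tree $J$.

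The hard part will be the framing bookkeeping inside move (B): I must verify that the parallel-copy construction can be carried out inside the normal $D^2\times D^2$-bundle of $W_J$ so that the relative Euler number splits additively between the newly-created Whitney disk and the residual disk, with no leakage into extra self-intersections. This is a local computation in the normal bundle, based on the additivity of $\omega$ under boundary-connected sum of Whitney disks with parallel boundary arcs together with the fact that the Whitney section determines $\omega$ over $\partial W_J$ up to homotopy. Once move (B) is validated, the verification of the intersection forest claims is immediate: move (A) bijectively preserves the multiset of non-$\iinfty$ trees since no unpaired intersections are ever created or cancelled, establishing claim (1); and move (B) replaces each $k\cdot J^\iinfty$ by exactly $|k|$ disjoint summands of $\sign(k)\cdot J^\iinfty$ without any other change to the intersection forest, establishing claim (2).
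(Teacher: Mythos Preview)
Your overall plan and your Move (A) are correct and match the paper's use of the untwisted splitting from \cite{ST2}. The gap is in Move (B), which you leave essentially undone: you flag the framing bookkeeping as ``the hard part'' but never carry it out, and as described the construction does not produce what you claim. A finger move of one paired sheet through the other creates a new cancelling pair whose canonical local Whitney disk is \emph{framed}; you give no mechanism by which your ``thin parallel'' $W'_J$ acquires exactly $\sign(k)$ units of twisting, nor by which the original $W_J$---which still pairs its own intersection pair and has not been altered---simultaneously loses that unit. Your appeal to additivity of $\omega$ under boundary-connected sum does not apply here, since no such sum is performed: $W'_J$ and the residual disk are disjoint Whitney disks pairing disjoint intersection pairs, and nothing in your description links their relative Euler numbers.

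The paper sidesteps this entirely by replacing your Move (B) with a \emph{twisted} variant of the same arc-guided splitting finger move. One splits $W_J$ along an arc exactly as in Move (A) but inserts a full twist into the finger (Figure~\ref{twist-split-Wdisk-fig}). If the arc partitions the zeros of an extension of the Whitney section into groups of total index $\omega_1$ and $\omega_2=\omega(W_J)-\omega_1$, the inserted twist shifts these to $\omega_1+1$ and $\omega_2-1$. Choosing the arc so that it cuts off a clean corner with $\omega_1=0$ therefore peels off a clean $(+1)$-twisted sub-disk (or $(-1)$-twisted, using the opposite twist) while the remainder drops to twisting $\omega(W_J)\mp 1$; iterating $|\omega(W_J)|$ times gives exactly the decomposition in claim~(ii). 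Because both resulting disks are literally pieces of the original $W_J$, the framing redistribution is visible directly in the picture and no separate bookkeeping argument is needed. (An alternative that would also rescue your outline is to use only untwisted Move (A) on the clean $k$-twisted $W_J$, choosing each splitting arc to separate off a single zero of the extended Whitney section; but that is not the construction you describe either.)
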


\begin{proof}
\begin{figure}
\centerline{\includegraphics[width=120mm]{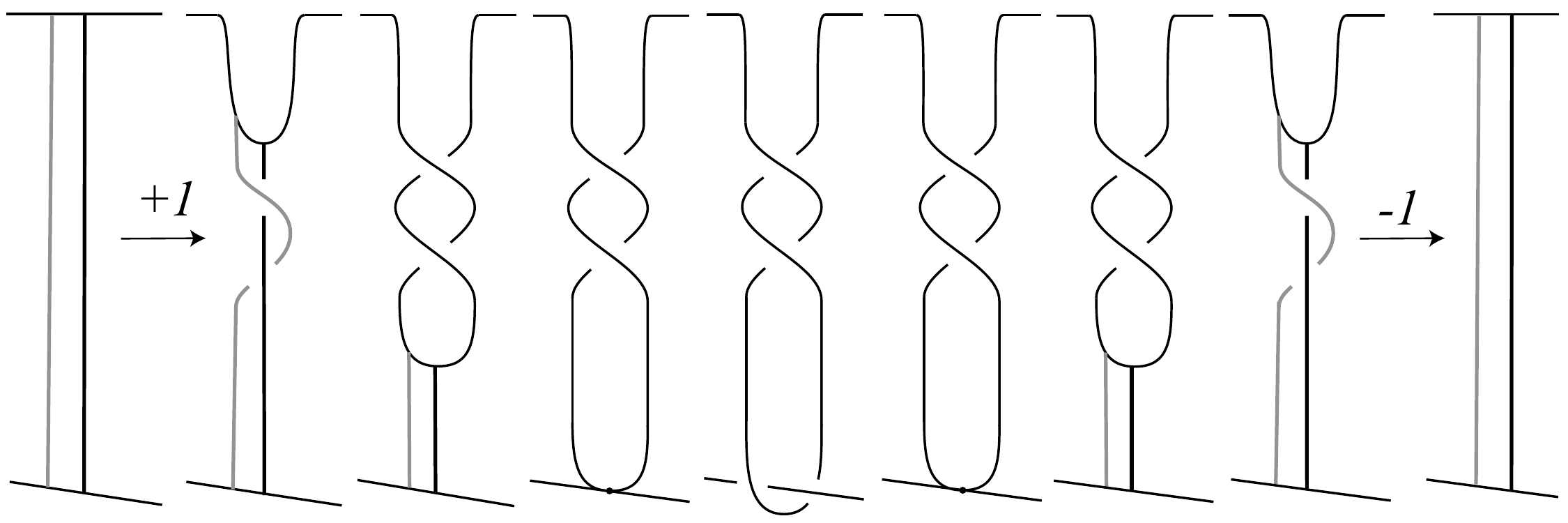}}
         \caption{A neighborhood of a twisted finger move which splits a Whitney disk into two Whitney disks. 
         The vertical black arcs are slices of
         the new Whitney disks, and the grey arcs are slices of extensions of the Whitney sections.
         The finger-move is supported in a neighborhood of an arc in the original Whitney disk running 
         from a point in the Whitney disk boundary on the ``upper'' surface sheet to a point 
         in the Whitney disk boundary on the ``lower'' surface sheet. (Before the finger-move this guiding arc would 
         have been visible in the middle picture as a vertical black arc-slice of the original
         Whitney disk.)}
         \label{twist-split-Wdisk-fig}
\end{figure}
Illustrated in
Figure~\ref{twist-split-Wdisk-fig} is a local picture of a twisted finger move, which splits one Whitney disk into two, while also changing twistings.
If the original Whitney disk in Figure~\ref{twist-split-Wdisk-fig} was framed, then the two new Whitney disks will have twistings $+1$ and $-1$, respectively. In general, if the arc guiding the finger move splits the twisting of the original Whitney disk into $\omega_1$ and $\omega_2$ zeros of the extended Whitney section, then the two new Whitney disks will have twistings $\omega_1+1$ and
$\omega_2-1$, respectively. Thus, by repeatedly splitting off framed corners into $\pm 1$-twisted Whitney disks, any 
$\omega$-twisted Whitney disk ($\omega \in\Z$) can be split into $|\omega |$-many $+1$-twisted or $-1$-twisted clean Whitney disks, together with split framed Whitney disks containing any interior intersections in the original twisted Whitney disk. Combining this with the untwisted splitting of the framed Whitney disks as in Lemma~13 of \cite{ST2} gives the result.
\end{proof}

\begin{rem}\label{rem:locally-flat-and-smooth}
We sketch here a brief explanation of why the smooth and locally flat filtrations are equal. A locally flat surface can be made smooth by a small perturbation, which after introducing cusps as necessary can be assumed to be a regular (locally flat) homotopy. By a general position argument, this regular homotopy can be assumed to be a finite number of finger moves, which are guided by arcs and lead to canceling self-intersection pairs which admit small disjointly embedded Whitney disks (which are `inverses' to the finger moves). These Whitney disks are only locally flat, but can be perturbed to be smooth,
again only at the cost of creating paired self-intersections, and iteration of this process leads to an arbitrarily high-order
smooth sub-Whitney tower pairing all intersections created by the original surface perturbation.
\end{rem}

\section{The realization maps}\label{sec:realization-maps}
This section contains clarifications and proofs of Theorems \ref{thm:R-onto-G} and \ref{thm:twisted} from the introduction
which state the existence of surjections $R_n\colon\cT_n\to\W_n$ and $R^\iinfty_n\colon\cT^\iinfty_n\to\W^\iinfty_n$ for all $n$, in particular exhibiting the 
sets $\W_n$ and $\W^\iinfty_n$ as finitely generated abelian groups under connected sum. 

All proofs in this section apply in the reduced setting as well, and the constructions
described here also define the surjections $\widetilde{R}_n\colon\widetilde{\cT}_n\to\W_n$ described
in the introduction.

Recall that our manifolds are assumed oriented, but orientations are suppressed from the discussion as much as possible.
In the following an orientation is fixed once and for all on $S^3$; and a \emph{framed link} has oriented components, each equipped with a nowhere-vanishing 
normal section.
\begin{defn}\label{def:links-bounding-towers}
A framed link $L\subset S^3=\partial B^4$ \emph{bounds} an order $n$ Whitney tower $\cW$ if
$\cW\subset B^4$ is an order $n$ Whitney tower whose order zero surfaces are immersed disks bounded by the components of $L$, as in Definition~\ref{def:framed-tower}. 

Similarly, a framed link $L\subset S^3=\partial B^4$ \emph{bounds} an order $n$ twisted Whitney tower $\cW$ if
$\cW\subset B^4$ is an order $n$ twisted Whitney tower whose order zero surfaces are immersed disks bounded by the components of $L$, as in Definition~\ref{def:twisted-W-towers}. 
\end{defn}

\begin{defn}\label{def:wtc}
For $n\geq 1$, framed links $L_0$ and $L_1$ in $S^3$ are \emph{Whitney tower concordant of order $n$} if the $i$th components of $L_0\subset S^3\times\{0\}$ and $-L_1\subset S^3\times\{1\}$ cobound an immersed annulus $A_i$ for each $i$ such that the $A_i$ are transverse and support an order $n$ Whitney tower.
If the $A_i$ support a \emph{twisted} order $n$ Whitney tower then 
$L_0$ and $L_1$ are said to be \emph{twisted Whitney tower concordant of order $n$}.
\end{defn}
Note that a (twisted) Whitney tower concordance preserves framings on on $L_0$ and $L_1$ (as links in $S^3$)
since all self-intersections of the $A_i$ are paired by Whitney disks in any (twisted) Whitney tower of order $n\geq 1$.

Recall from the introduction that the set of $m$-component framed links in $S^3$ which bound order $n$ (twisted) Whitney towers in $B^4$ is denoted by $\bW_n=\bW_n(m)$ (resp. $\bW^\iinfty_n$); and and the quotient of $\bW_n$ by the equivalence relation of order $n+1$ (twisted) Whitney tower concordance is 
denoted by $\W_n$ (resp. $\W^\iinfty_n$).

Throughout this section the twisted setting mirrors the framed setting, with discussions and arguments given simultaneously.

We first need to show our essential criterion, Corollary~\ref{cor:tau=w-concordance}, for links to represent equal elements in the associated graded $\W_n$:
Links $L_0$ and $L_1$ in $\bW_n$ represent the same element of $\W_n$
if and only if there exist order $n$ Whitney towers $\cW_i$ in $B^4$ with $\partial\cW_i=L_i$ and $\tau_n(\cW_0)=\tau_n(\cW_1)\in\cT_n$.

\begin{proof}[Proof of Corollary~\ref{cor:tau=w-concordance}] 
If $L_0$ and $L_1$ are equal in $\W_n$ then they cobound $A$ supporting an order $n+1$ Whitney tower $\cV$ in $S^3\times I$, and any order $n$ Whitney tower $\cW_1$ in $B^4$ bounded by $L_1$ can be extended by $\cV$ to 
form an order $n$ Whitney tower $\cW_0$ in $B^4$ bounded by $L_0$, with 
$\tau_n(\cW_0)=\tau_n(\cW_1)\in\cT_n$
since $\tau_n(\cV)$ vanishes. 

Conversely, suppose that $L_0$ and $L_1$ bound order $n$ Whitney towers $\cW_0$ and $\cW_1$ in $4$--balls $B_0^4$ and $B_1^4$, with 
$\tau_n(\cW_0)=\tau_n(\cW_1)$. Then constructing $S^3\times I$
as the connected sum $B_0^4\# B_1^4$ (along balls in the complements of $\cW_0$ and $\cW_1$), and tubing together the corresponding order zero disks of $\cW_0$ and $\cW_1$, and taking the union of the Whitney disks in $\cW_0$ and 
$\cW_1$, yields a collection $A$ of properly immersed annuli connecting $L_0$ and $L_1$ and supporting an order $n$ Whitney tower $\cV$.  Since the orientation of the ambient $4$--manifold has been reversed for one of the original Whitney towers, say $\cW_1$, which results in a global sign change for 
$\tau_n(\cW_1)$, it follows that $\cV$ has vanishing order $n$ intersection invariant:
$$
\tau_n(\cV)=\tau_n(\cW_0)-\tau_n(\cW_1)=\tau_n(\cW_0)-\tau_n(\cW_0)=0\in\cT_n
$$
So by Theorem~\ref{thm:framed-order-raising-on-A}, $A$ is homotopic (rel $\partial$) to $A'$ supporting
an order $n+1$ Whitney tower, and hence $L_0$ and $L_1$ are equal in $\W_n$.
\end{proof} 

\begin{rem}\label{rem:tau=w-concordance}
The analogous statement and proof of Corollary~\ref{cor:tau=w-concordance} holds in the twisted case (with Theorem~\ref{thm:twisted-order-raising-on-A} playing the role of Theorem~\ref{thm:framed-order-raising-on-A}). For this case, we'll spell out the statement carefully but in several instances below we will just state that the twisted case is analogous:
Links $L_0$ and $L_1$ in $\bW_n^\iinfty$ represent the same element of $\W^\iinfty_n$
if and only if there exist order $n$ twisted Whitney towers $\cW_0$ and $\cW_1$ in $B^4$ bounded by $L_0$ and $L_1$ respectively such that
$\tau^\iinfty_n(\cW_0)=\tau^\iinfty_n(\cW_1)\in\cT^\iinfty_n$.
\end{rem}

\begin{rem}\label{rem:reduced-tau=w-concordance}
Remark~\ref{rem:tau=w-concordance} similarly applies to the reduced setting by Theorem~\ref{thm:framed-order-raising-mod-Delta} below, although we will omit further reference to $\widetilde{\cT}$ in this section.
\end{rem}

\subsection{Band sums of links}\label{subsec:band-sum}
The \emph{band sum} $L\#_\beta L'\subset S^3$
of oriented $m$-component links $L$ and $L'$ along bands $\beta$ is defined as follows: Form $S^3$ as the connected sum of $3$--spheres containing $L$ and $L'$ along balls in the link complements.  Let
$\beta$ be a collection of disjointly embedded oriented bands joining like-indexed link components such that the band orientations are compatible with the link orientations. Take the usual connected sum of each pair of components along the corresponding band. Although it is well-known that the concordance class of $L\#_\beta L'$ depends in general on $\beta$, it turns out that the image of 
$L\#_\beta L'$ in $\W_n$ (or in $\W^\iinfty_n$) does not depend on $\beta$:

\begin{lem}\label{lem:link-sum-well-defined}   
For links $L$ and $L'$ representing elements of $\W_n$, any band sum $L\#_\beta L'$ represents an element of 
$\W_n$ which only depends on the equivalence classes of $L$ and $L'$ in $\W_n$.	The same statement holds in $\W^\iinfty_n$.
\end{lem}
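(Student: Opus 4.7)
The plan is to use Corollary~\ref{cor:tau=w-concordance} (and its twisted analogue in Remark~\ref{rem:tau=w-concordance}) by constructing, from order $n$ (twisted) Whitney towers $\cW,\cW'$ bounded by $L,L'$ and a choice of bands $\beta$, an order $n$ (twisted) Whitney tower $\cW\#_\beta \cW'$ bounded by $L\#_\beta L'$ whose order $n$ intersection invariant depends only on $\tau_n(\cW)$ and $\tau_n(\cW')$, and then to deduce independence of both the bands and the chosen representatives.

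First I would realize $\cW$ and $\cW'$ in disjoint $4$--balls and form their boundary connect sum along balls in the complements of $L$ and $L'$, as in the proof of Corollary~\ref{cor:tau=w-concordance}. The bands $\beta$ sit in the connect-sum $3$--sphere and can be pushed slightly into the interior of the resulting $B^4$ as disjointly embedded framed tubes joining the like-indexed order zero disks of $\cW$ and $\cW'$. A general position argument keeps these tubes disjoint from all higher-order Whitney disks and from the other order zero disks. Tubing the order zero disks and retaining the Whitney disks of $\cW$ and $\cW'$ then gives an order $n$ (twisted) Whitney tower $\cW\#_\beta\cW'\subset B^4$ bounded by $L\#_\beta L'$, which shows $L\#_\beta L' \in \bW_n$ (resp.\ $\bW^\iinfty_n$). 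Since the tubes are framed, embedded, and carry no singularities or twisting, the intersection forest of $\cW\#_\beta\cW'$ is just the disjoint union of those of $\cW$ and $\cW'$, so
\[
\tau_n(\cW\#_\beta\cW')=\tau_n(\cW)+\tau_n(\cW')\in\cT_n,
\]
and analogously $\tau^\iinfty_n(\cW\#_\beta\cW')=\tau^\iinfty_n(\cW)+\tau^\iinfty_n(\cW')\in\cT^\iinfty_n$. Given this additivity, independence of $\beta$ is immediate: for any two choices $\beta,\beta'$ the towers $\cW\#_\beta\cW'$ and $\cW\#_{\beta'}\cW'$ have equal order $n$ intersection invariants, so Corollary~\ref{cor:tau=w-concordance} yields $[L\#_\beta L'] = [L\#_{\beta'} L']\in\W_n$. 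Independence of representatives follows by the same mechanism: if $L_1,L_2$ represent the same class, Corollary~\ref{cor:tau=w-concordance} provides order $n$ towers $\cW_1,\cW_2$ with $\tau_n(\cW_1)=\tau_n(\cW_2)$, whence the additivity gives $\tau_n(\cW_i\#_\beta\cW')=\tau_n(\cW_i)+\tau_n(\cW')$ for $i=1,2$ and a second application of Corollary~\ref{cor:tau=w-concordance} yields $[L_1\#_\beta L']=[L_2\#_\beta L']$; the argument in the $L'$ variable is symmetric, and the twisted case proceeds identically using Remark~\ref{rem:tau=w-concordance}.

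The main technical point will be the construction of the tubes: one must verify that bands $\beta\subset S^3$ can be promoted to embedded framed annuli in $B^4$ that are disjoint from the rest of $\cW$ and $\cW'$ and contribute no twisting. Because $\beta$ lies in a $3$--dimensional slice and the interior Whitney disks may be arranged to miss a collar of that slice, a general position push-in suffices, and the framing of $\beta$ in $S^3$ transports to a trivialization of the normal bundle of each tube. This is the step that ensures the twisting count is preserved, which is precisely what allows the $\tau^\iinfty_n$-additivity, and hence the twisted version of the lemma, to go through without correction terms.
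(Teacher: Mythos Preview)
Your proof is correct and follows essentially the same approach as the paper: the paper factors out your tube construction as a separate Lemma~\ref{lem:exists-tower-sum} (establishing $t(\cW^\#)=t(\cW)\amalg t(\cW')$, hence additivity of $\tau_n$), and then applies Corollary~\ref{cor:tau=w-concordance} in both directions exactly as you do. The only structural difference is that the paper handles independence of bands and of representatives simultaneously by comparing $L_0\#_{\beta_0}L'_0$ with $L_1\#_{\beta_1}L'_1$, whereas you treat the two independences separately, but the content is identical.
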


\begin{proof} We shall only give the proof in the framed case, the twisted case is analogous.
If $L_0$ and $L_1$ represent the same element of $\W_n$, 
and if $L'_0$ and $L'_1$ represent the same element of $\W_n$, then
by Corollary~\ref{cor:tau=w-concordance} above, for $i=0,1$, there are order $n$ Whitney towers $\cW_i$
and $\cW'_i$ bounding $L_i$ and $L'_i$ such that $\tau_n(\cW_0)=\tau_n(\cW_1)$ and $\tau_n(\cW'_0)=\tau_n(\cW'_1)$.
By Lemma~\ref{lem:exists-tower-sum} just below, $L_i\#_{\beta_i} L'_i$ bounds $\cW_i^\#$ for $i=0,1$, with
$$
\tau_n(\cW_0^\#)=\tau_n(\cW_0)+\tau_n(\cW'_0)=\tau_n(\cW_1)+\tau_n(\cW'_1)=\tau_n(\cW_1^\#)
$$
so again by Corollary~\ref{cor:tau=w-concordance}, $L_0\#_{\beta_0} L'_0$ is order $n+1$ Whitney tower concordant to 
$L_1\#_{\beta_1} L'_1$, hence $L_0\#_{\beta_0} L'_0$ 
and $L_1\#_{\beta_1} L'_1$ represent the same element of $\W_n$.
\end{proof}

\begin{lem}\label{lem:exists-tower-sum}
If $L$ and $L'$ bound order $n$ (twisted) Whitney towers $\cW$ and $\cW'$ in $B^4$, then for any $\beta$ there exists an order $n$ (twisted)
Whitney tower $\cW^\#\subset B^4$ bounded by $L\#_\beta L'$, such that 
$t(\cW^\#)=t(\cW)\amalg t(\cW')$, where $t(\cV)$ denotes the intersection forest of a Whitney tower $\cV$ as above in subsection~\ref{subsec:int-forests}.
\end{lem}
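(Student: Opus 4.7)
The plan is to realize $B^4$ as the boundary connected sum $B^4_1 \natural B^4_2$, with the identifying $3$-ball in $S^3$ chosen disjointly from $L\cup L'$ and from the bands $\beta$, so that after identification $\cW\subset B^4_1$ and $\cW'\subset B^4_2$ sit in disjoint halves of $B^4$ while $\partial B^4 = S^3$ is canonically the sphere along which the band sum $L\#_\beta L'$ is formed. In this set-up, no new transverse intersections are created in the interior, and the construction of $\cW^\#$ amounts to fusing the order zero strata of $\cW$ and $\cW'$ along the bands while leaving all higher-order Whitney disks untouched.

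Next, I would thicken each oriented band $b\in\beta$ to a framed $2$-dimensional band-surface inside a small collar $S^3\times[0,\epsilon)$ of $\partial B^4$: take $b\times[0,\epsilon]\subset S^3\times[0,\epsilon]$, attached along short arcs on the two order zero disks being joined (obtained by pushing $\partial b$ slightly into the interior along the normal framings of the link components). Since all higher-order Whitney disks of $\cW$ and $\cW'$ lie in the interiors of $B^4_1$ and $B^4_2$, and $\epsilon$ can be taken as small as desired, general position guarantees that these band-surfaces are disjointly embedded, disjoint from every Whitney disk, and create no new singularities. The result is a new collection of properly immersed disks in $B^4$ bounded by $L\#_\beta L'$, which I declare to be the order zero stratum of $\cW^\#$, and I retain the Whitney disks of $\cW\cup \cW'$ verbatim (with their original boundaries, interiors, framings, and twistings).

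Because no intersections or Whitney pairings have been created or destroyed, $\cW^\#$ is immediately an order $n$ Whitney tower, and in the twisted setting an order $n$ twisted Whitney tower, with the same unpaired intersections and the same twisted Whitney disks (with identical twistings $\omega(W_J)$) as $\cW$ and $\cW'$ combined. Consequently the intersection forest satisfies $t(\cW^\#)=t(\cW)\amalg t(\cW')$ as a disjoint union of signed trees (and signed $\iinfty$-trees, in the twisted case).

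The only real point requiring care — which will be the main, and essentially routine, obstacle — is verifying that framings and twistings are preserved by the band-surface construction. A band in $S^3$ carries a canonical normal framing determined by the orientations of $S^3$ and of the band, and this framing is precisely what is needed to extend the framings along the pairs of arcs on the order zero disks to a framing of the product band-surface $b\times[0,\epsilon]$; in particular, the relative Euler number of each order zero surface, and hence each $\omega(A_i)$, is unchanged by the band-sum operation. All higher-order Whitney disks are moved rigidly, so their twistings persist as well, and the assertion $t(\cW^\#)=t(\cW)\amalg t(\cW')$ follows.
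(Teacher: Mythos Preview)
Your proposal is correct and follows essentially the same approach as the paper: form $B^4$ as a boundary connected sum of the two $4$--balls along a $3$--ball in the link complements, then band-sum the order zero disks using the bands $\beta$ pushed slightly into the interior while leaving all higher-order Whitney disks untouched, so that no new singularities are created. The paper's own argument is a terse two-sentence version of exactly this; your additional discussion of why the framings and twistings are preserved is more detailed than the paper (which simply calls the conclusion ``clear''), but is accurate and does not deviate from the intended construction.
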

\begin{proof}
Let $B$ and $B'$ be the $3$--balls in the link complements used to form the $S^3$ containing $L\#_\beta L'$.  Then gluing together the two $4$--balls containing $\cW$ and $\cW'$ along $B$ and $B'$ forms $B^4$ containing $L\#_\beta L'$ in its boundary. Take $\cW^\#$ to be the boundary band sum of $\cW$ and $\cW'$ along the order zero disks guided by the bands 
$\beta$, with the interiors of the bands perturbed slightly into the interior of $B^4$. 
It is clear that $t(\cW^\#)$ is just the disjoint union $t(\cW)\amalg t(\cW')$ since no new singularities have been created.
\end{proof}

\subsection{The realization maps}\label{subsec:realization-maps}
The realization maps $R_n$ are defined as follows: Given any group element $g\in\cT_n$, by Lemma~\ref{lem:realization-of-geometric-trees} just below there exists an $m$-component link $L\subset S^3$ bounding
an order $n$ Whitney tower $\cW\subset B^4$ such that $\tau_n(\cW)=g\in\cT_n$.
Define $R_n(g)$ to be the class determined by $L$ in $\W_n$.
This is well-defined (does not depend on the choice of such $L$) by Corollary~\ref{cor:tau=w-concordance}. The twisted realization map $R^\iinfty_n$ is defined the same way using twisted Whitney towers.

\begin{lem}\label{lem:realization-of-geometric-trees}
For any disjoint union $\amalg_p\ \epsilon_p \cdot  t_p \,\, + \amalg_J\ \omega (W_J) \cdot  J^\iinfty$ there exists an $m$-component link $L$ bounding a twisted  Whitney tower $\cW$
with intersection forest $t(\cW)= \amalg_p\ \epsilon_p \cdot  t_p \,\, + \amalg_J \ \omega (W_J) \cdot  J^\iinfty$. 
If the disjoint union contains no $\iinfty$-trees then all Whitney disks in $\cW$ are framed.
\end{lem}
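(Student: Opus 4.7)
\medskip

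\noindent\textbf{Proof proposal.} The strategy is to realize each individual summand separately and then combine them via band sum. More precisely, by Lemma~\ref{lem:exists-tower-sum} the intersection forest is additive under boundary band sum of Whitney towers: if $L_k$ bounds $\cW_k$ with $t(\cW_k) = \epsilon_k\cdot t_k$ (respectively $\omega_k \cdot J_k^\iinfty$) for $k = 1,\dots,N$, then the iterated band sum $L_1 \#_{\beta_1} L_2 \#_{\beta_2} \cdots \#_{\beta_{N-1}} L_N$ bounds a twisted Whitney tower whose intersection forest is the disjoint union $\amalg_k t(\cW_k)$. Hence it suffices to realize each signed tree $\epsilon\cdot t$ and each signed $\iinfty$-tree $\omega\cdot J^\iinfty$ separately as the intersection forest of a single link bounding a (twisted) Whitney tower.

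The plan for a single signed order $n$ tree $\epsilon\cdot t$ is the standard iterated Bing doubling of a Hopf link, as referenced after Theorem~\ref{thm:R-onto-G}. Write $t = \langle I, J\rangle$ as an inner product of two rooted trees of total order $n$. The base case $n = 0$ is $t = \langle i, j\rangle$: take $L$ to be the zero-framed Hopf link with components labeled $i$ and $j$ (disjoint union with an unlink on the remaining indices); the obvious pair of immersed disks intersect in a single transverse point of sign $\epsilon$ (obtained by mirroring one component if necessary), giving $t(\cW) = \epsilon\cdot \langle i,j\rangle$. For the inductive step, working with split trees, a single trivalent vertex near a univalent label corresponds geometrically to replacing a component of a link by its untwisted Bing double with a band; iterating this Bing-doubling/band-sum procedure along the edges of $t$ starting from the initial Hopf link produces disks with exactly one unpaired intersection point of sign $\epsilon$ and associated tree $t$, all lower-order intersections being paired up by clean framed Whitney disks realizing the intermediate vertices of $t$. (This is exactly the internal-band-sum realization of Cochran--Milnor.) Orientations and signs are handled by applying the AS convention at vertices, and by mirroring subcomponents as needed to flip $\epsilon$.

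For a single $\iinfty$-tree $\omega \cdot J^\iinfty$ of order $n$, the base case $n = 0$ is $J^\iinfty = i^\iinfty$: take $L$ to be the unknot with component $i$ carrying framing $\omega \in \Z$ (disjoint from a zero-framed unlink). A framed disk bounded by this unknot has relative Euler number $\omega$, contributing $\omega\cdot i^\iinfty$ to $t(\cW)$. For $n\geq 1$, writing $J = (J_1, J_2)$, perform iterated Bing doubling along the edges of $J$ starting from a $\pm 1$-framed unknot; this produces a link whose first-stage immersed disks admit clean higher-order Whitney disks pairing all intersections except for a single clean $\pm 1$-twisted Whitney disk at the ``root'' of $J$, with associated $\iinfty$-tree $J^\iinfty$. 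Repeating $|\omega|$ times and band-summing realizes arbitrary integer coefficients $\omega$, in accordance with Lemma~\ref{lem:split-w-tower}. When the target forest has no $\iinfty$-trees all these twisted pieces are absent, so the resulting $\cW$ has all Whitney disks framed, giving the second statement.

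The step requiring the most care is verifying that each iterated Bing-doubling/band-sum step produces \emph{exactly} the prescribed tree with no spurious lower-order unpaired intersections or unwanted twistings. The key point is that at each stage the new Whitney disk pairing the pair of clasp intersections created by the band/Bing-double can be chosen to be clean and framed (in the non-$\iinfty$ case) by taking it small and disjoint from the rest of the construction, so all intersections of order strictly less than $n$ are paired off by clean Whitney disks and contribute nothing to $t(\cW)$. Handling twistings in the $\iinfty$-case reduces to the observation that Bing doubling preserves the framing of the ambient component, so the single $\pm 1$-twisted Whitney disk at the top is the unique source of non-trivial $\omega$ in $t(\cW)$.
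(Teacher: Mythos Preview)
Your overall strategy---reduce to single trees via Lemma~\ref{lem:exists-tower-sum}, then realize each tree by iterated Bing-doubling starting from a Hopf link (for non-$\iinfty$ trees) or from a framed unknot (for $\iinfty$-trees)---is exactly the paper's approach. The framed case is essentially correct, though you skip the base case $t=\langle i,i\rangle$ with a repeated label; this is not a Hopf link but rather a single $\pm 2$-framed unknot (or equivalently the Hopf link with its two components banded together), and should be stated.

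The $\iinfty$-tree case, however, contains a genuine confusion. You write ``perform iterated Bing doubling along the edges of $J$ starting from a $\pm 1$-framed unknot'' and justify it by saying ``Bing doubling preserves the framing of the ambient component.'' But an \emph{untwisted} Bing-double of the unknot is the two-component unlink, regardless of what framing the unknot carried: the original component is discarded and its framing goes with it, so no twisted Whitney disk appears. The twisting of the order~$1$ Whitney disk $W_{(i,j)}$ does not come from the framing of the original unknot; it comes from the twisting parameter of the Bing-doubling operation itself. The paper makes this explicit: one performs a $k$-\emph{twisted} Bing-double of the (zero-framed) unknot, and then checks via the Whitney section (Figures~\ref{fig:Bing-unlink-W-disk} and \ref{fig:Bing-unlink-W-disk-twisting}) that the resulting Whitney disk has $\omega(W_{(i,j)})=k$. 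Your argument can be repaired by replacing ``Bing-double the $\pm 1$-framed unknot'' with ``perform a $\pm 1$-twisted Bing-double of the unknot,'' and then proceeding by \emph{untwisted} Bing-doubling on the resulting components as you describe; but the mechanism producing the twist must be identified correctly.
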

Note that if in the disjoint union all non-$\iinfty$ trees are order at least $n$ and all $\iinfty$-trees are order at least $n/2$ then  $\cW$ will have order $n$.

\begin{proof}
It suffices to consider the cases where the disjoint union consists of just a single (signed) tree or $\iinfty$-tree  since by Lemma~\ref{lem:exists-tower-sum} any sum of such trees can then be realized by band sums of links.

The following algorithm, in the untwisted case, is the algorithm called "Bing-doubling along a tree" by Cochran and used in Section 7 of \cite{C} and Theorem 3.3 of \cite{C1} to produce links in $S^3$  with prescribed (first non-vanishing) Milnor invariants.

\begin{figure}
\centerline{\includegraphics[scale=.325]{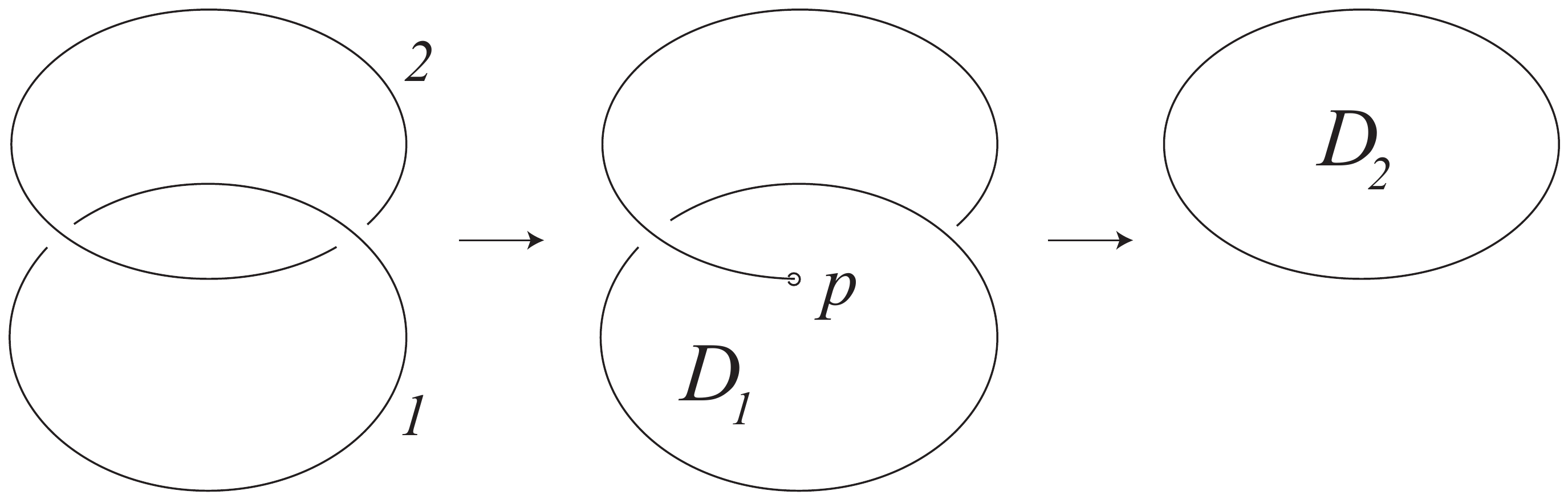}}
         \caption{Pushing into $B^4$ from left to right: A Hopf link in $S^3=\partial B^4$ bounds embedded disks $D_1\cup D_2\subset B^4$ which intersect in a point $p$, with $t_p=\langle 1,2 \rangle$. }
         \label{fig:Hopf-disk}
\end{figure}

\textbf{Realizing order zero trees and $\iinfty$-trees.}
A 0-framed Hopf link bounds an order zero Whitney tower $\cW=D_1\cup D_2\subset B^4$, where the two embedded disks $D_1$ and $D_2$ have a single interior intersection point $p$ with $t_p=\langle 1,2 \rangle = 1 -\!\!\!-\!\!\!- \,2 $ (see Figure~\ref{fig:Hopf-disk}).
Assuming appropriate fixed orientations of $B^4$ and $S^3$, the sign $\epsilon_p$ associated to $p$ is the usual sign of the Hopf link. So taking a 0-framed $(m-2)$-component trivial link together with a Hopf link (as the $i$th and $j$th components) gives an $m$-component link $L$ bounding
$\cW$ with $t(\cW)=\epsilon_p\cdot\langle i,j \rangle = \epsilon_p\cdot i -\!\!\!- \,j $, for any 
$\epsilon_p=\pm 1$, and $i\neq j$.

To realize the tree $\pm\ i -\!\!\!-\!\!\!- \,i $, we can use the unlink with framings 0, except that the component labeled by the index $i$ has framing $\pm 2$. Similarly, if the component has framing $\pm 1$ then the resulting tree is $\pm \ \iinfty -\!\!\!-\!\!\!-\,i $.

\begin{figure}
\centerline{\includegraphics[scale=.4]{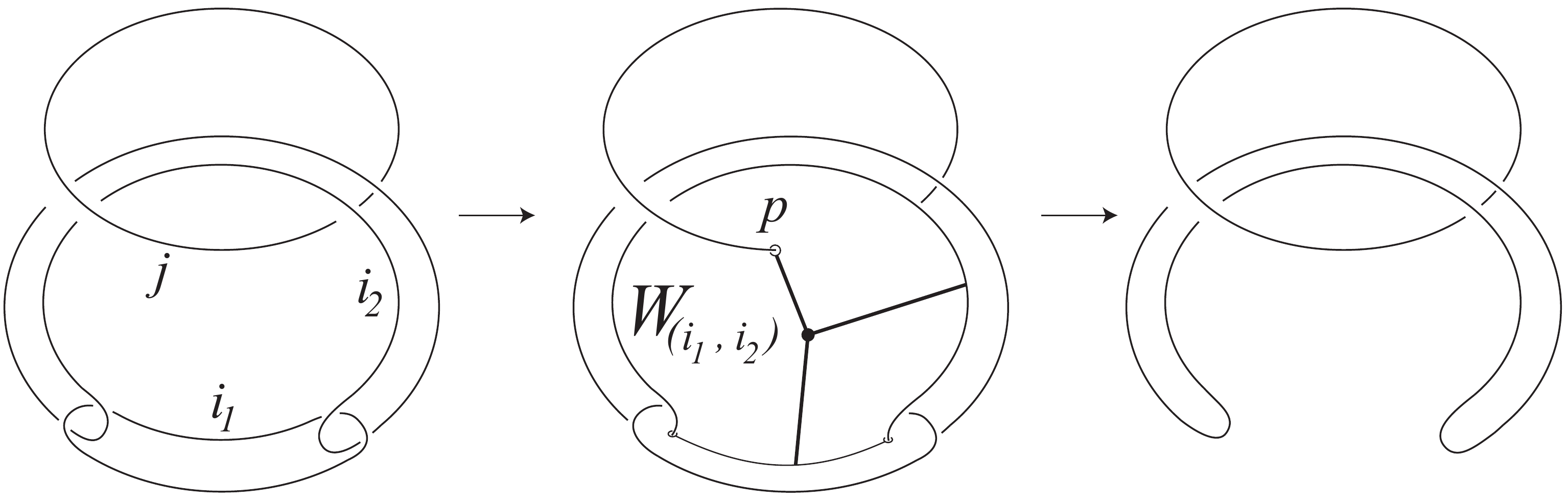}}
         \caption{Pushing into $B^4$ from left to right: The disks $ D_{i_2}$ and $D_j$ extend to the
         right-most picture where they are completed by capping off the unlink. The disk $D_{i_1}$ only extends to the middle picture where the intersections between $D_{i_1}$ and $ D_{i_2}$ are paired by the Whitney disk $W_{(i_1,i_2)}$, that has a single interior intersection $p\in W_{(i_1,i_2)}\cap D_j$ with $t_p=\langle (i_1,i_2),j \rangle$.}
         \label{fig:Borromean-Bing-Hopf}
\end{figure}

\textbf{Realizing order $1$ trees.}
Consider now a link $L$ whose $i$th and $j$th components form a Hopf link $L^i\cup L^j$ bounding disks
$D_i\cup D_j\subset B^4$ with transverse intersection $p=D_i\cap D_j$. Assume that $D_i\cup D_j$ extends
to an order zero Whitney tower $\cW$ bounded by $L$ with $t(\cW)=\epsilon_p\cdot t_p=\epsilon_p\cdot\langle i,j \rangle$.

Replacing $L^i$ by an untwisted Bing-double
$L^{i_1}\cup L^{i_2}$
results in a new sublink of Borromean rings $L^{i_1}\cup L^{i_2}\cup L^j$ bounding disks $D_{i_1}\cup D_{i_2}\cup D_j$
as indicated in Figure~\ref{fig:Borromean-Bing-Hopf}, with
$D_{i_1}$ and $ D_{i_2}$ intersecting in a canceling pair of intersections paired by an order $1$ Whitney disk $W_{(i_1,i_2)}$, which can be formed from $D_i$ with a small collar removed,
so that $W_{(i_1,i_2)}$ has a single intersection with $D_j$ corresponding to the original $p=D_i\cap D_j$.
(One can think of $D_{i_1}$ and $ D_{i_2}$ as being formed by the trace of the obvious 
pulling-apart homotopy that shrinks $L^{i_1}$ and $L^{i_2}$ down in a tubular neighborhood of $L^i$, 
with the canceling pair of intersections between $D_{i_1}$ and $D_{i_2}$ being created as the clasps are pulled apart.)

The effect of this Bing-doubling operation on the intersection forest is that the original order zero
$t_p=\langle i,j \rangle$ has given rise to the order~$1$ tree $\langle (i_1,i_2),j \rangle$. 
Switching the orientation on one of the new components changes the sign of $p$, as can be checked using our orientation conventions.
By relabeling and/or banding together components of this new link any labels on this order~$1$ tree
can be realized. Since the doubling was untwisted, $W_{(i_1,i_2)}$ is framed (see Figures \ref{fig:Bing-unlink-W-disk} and \ref{fig:Bing-unlink-W-disk-twisting}), so the Whitney tower
bounded by the new link
is order $1$. 

\begin{figure}
\centerline{\includegraphics[scale=.35]{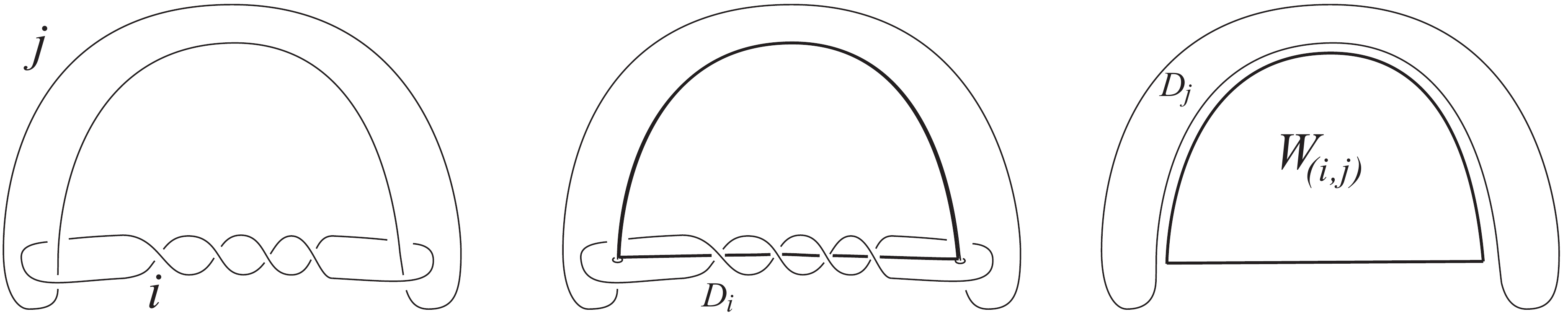}}
         \caption{Pushing into $B^4$ from left to right: An $i$- and $j$-labeled $n$-twisted Bing-double (case $n=2$) of the unknot in 
         $S^3=\partial B^4$
         bounds disks $D_i$ and $D_j$ whose intersections are paired by a Whitney disk $W_{(i,j)}$. 
         $D_j$ extends to the right-hand picture but $D_i$ only extends to the middle picture, where the boundary of $W_{(i,j)}$ is indicated by the dark arcs. The rest of $W_{(i,j)}$ extends into the right-hand picture where disjointly embedded disks bounded by the unlink complete both $W_{(i,j)}$ and $D_j$. The interior of $W_{(i,j)}$ is embedded and disjoint from both $D_i$ and $D_j$. Figure~\ref{fig:Bing-unlink-W-disk-twisting} shows that $W_{(i,j)}$ is twisted, with $\omega(W_{(i,j)})=n$.}
         \label{fig:Bing-unlink-W-disk}
\end{figure}

\textbf{Realizing order $n$ trees.}
Since any order $n$ tree can be gotten from some order $n-1$ tree by attaching two new edges to a univalent vertex as in the previous paragraph, it follows inductively that 
any order $n$ tree is the intersection forest of a Whitney tower bounded by some link. (First create a distinctly-labeled tree of the desired `shape' by doubling, then correct the labels by interior band-summing.)

\textbf{Realizing $\iinfty$-trees of order $1$.}
As illustrated (for the case $n=2$) in Figures~\ref{fig:Bing-unlink-W-disk} and \ref{fig:Bing-unlink-W-disk-twisting}, the $n$-twisted Bing-double
of the unknot (with components labeled $i$ and $j$) bounds an order $2$ twisted Whitney tower $\cW$ with 
$t(\cW)=n\cdot ( i,j )^\iinfty=n\cdot  \iinfty \!-\!\!\!\!\!-\!\!\!\!<^{\,i}_{\,j}$. Banding together the two components
would yield a knot realizing $(i,i)^\iinfty$. 

\begin{figure}
\centerline{\includegraphics[scale=.4]{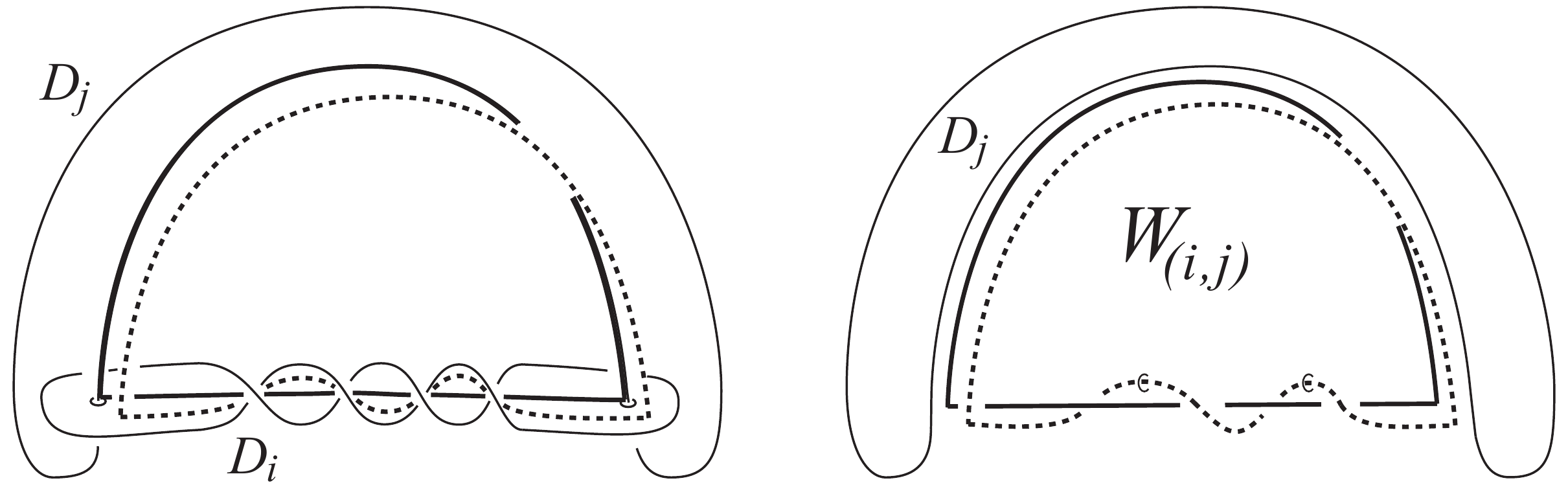}}
         \caption{The Whitney section over $\partial W_{(i,j)}$ (from Figure~\ref{fig:Bing-unlink-W-disk}) is indicated by the dashed arcs on the left. The twisting 
         $\omega (W_{(i,j)})=n$ (the obstruction to extending the Whitney section across the Whitney disk) corresponds to the 
         $n$-twisting of the Bing-doubling operation.}
         \label{fig:Bing-unlink-W-disk-twisting}
\end{figure}

\textbf{Realizing $\iinfty$-trees of order $n$.}
By applying iterated untwisted Bing-doubling operations to the $i$- and $j$-labeled components 
of the order $1$ case, one can construct for any rooted tree $( I,J )$ a link bounding a twisted Whitney tower
$\cW$ with $t(\cW)=n \cdot ( I,J )^\iinfty$. For instance, if in the construction of Figure~\ref{fig:Bing-unlink-W-disk} the $j$-labeled link component is replaced by an untwisted Bing-double, then the disk $D_j$ in that construction would be replaced by a (framed) Whitney disk $W_{(j_1,j_2)}$, and the $n$-twisted $W_{(i,j)}$ would be replaced by an $n$-twisted
$W_{(i,(j_1,j_2))}$.   (As for non-$\iinfty$ trees above, first create a distinctly-labeled tree of the desired `shape' by doubling, then correct the labels by interior band-summing.) 
\end{proof}


\subsection{Proofs of Theorem~\ref{thm:R-onto-G} and Theorem~\ref{thm:twisted}}\label{subsec:realization-maps}
Recall the content of Theorem~\ref{thm:R-onto-G}: The realization maps $R_n: \cT_n \to\W_n$ are epimorphisms, 
with the group operation on $\W_n$ given by band sum $\#$. Moreover, $\W_n$ is the set of framed links $L\in\bW_n$ modulo the relation that $[L_1]=[L_2]\in\W_n$ if and only if $L_1 \# -L_2$ lies in $\bW_{n+1}$, for some choice of connected sum $\#$, where $-L$ is the mirror image of $L$ with reversed framing.
The content of Theorem~\ref{thm:twisted} in the twisted setting is analogous.

From Lemma~\ref{lem:link-sum-well-defined} the band sum of links gives well-defined operations in $\W_n$ and $\W^\iinfty_n$ which are clearly associative and commutative, with the $m$-component unlink representing an identity element.  The realization maps are homomorphisms by Lemma~\ref{lem:exists-tower-sum} and surjectivity is proven as follows: Given any link $L\in \bW_n$, choose a Whitney tower $\cW$ of order $n$ with boundary $L$ and compute $\tau:=\tau_n(\cW)$. Then take $L':= R_n(\tau)$, a link that's obviously in the image of $R_n$ and for which we know a Whitney tower $\cW'$ with boundary $L'$ and $\tau(\cW') = \tau$.
By Corollary~\ref{cor:tau=w-concordance} it follows that $L$ and $L'$ represent the same element in $\W_n$.

Considering the second ``Moreover...'' statements of Theorem~\ref{thm:R-onto-G} and Theorem~\ref{thm:twisted}, first assume that
$L_0$ and $L_1$ represent the same element of $\W_n$ (resp. $\W^\iinfty_n$). Then by Corollary~\ref{cor:tau=w-concordance}, there exist order $n$ (twisted) Whitney towers $\cW_0$ and $\cW_1$ in $B^4$ bounded by $L_0$ and $L_1$ respectively such that
$\tau_n(\cW_0)=\tau_n(\cW_1)\in\cT_n$ (resp. $\tau^\iinfty_n(\cW_0)=\tau^\iinfty_n(\cW_1)\in\cT^\iinfty_n$). We want to show that $L_0\#-L_1$ bounds an order~$n+1$ (twisted) Whitney tower, which will follow from Lemma~\ref{lem:exists-tower-sum} and the ``order-raising'' Theorem~\ref{thm:framed-order-raising-on-A} (respectively Theorem~\ref{thm:twisted-order-raising-on-A}) if $-L_1$ bounds an order $n$ (twisted) Whitney tower $\overline{\cW_1}$ such that $\tau_n(\overline{\cW_1})=-\tau_n(\cW_1)\in\cT_n$ 
(resp. $\tau^\iinfty_n(\overline{\cW_1})=-\tau^\iinfty_n(\cW_1)\in\cT^\iinfty_n$). If $r$ denotes the reflection on $S^3$ which sends $L_1$ to 
$-L_1$, then the product $r\times\id$ of $r$ with the identity is an involution on $S^3\times I$, and the image $r\times\id(\cW_1)$
of $\cW_1$ is such a $\overline{\cW_1}$. To see this, observe that $r\times\id$ switches the signs of all transverse intersection points,
and is an isomorphism on the oriented trees in $\cW_1$; and hence switches the signs of all Whitney disk framing obstructions (which can be computed as intersection numbers between Whitney disks and their push-offs) -- note that $r\times\id$ is only being applied to 
$\cW_1$, while $S^3\times I$ is fixed.   

For the other direction of the ``Moreover...'' statements, assume that $L_0\#-L_1\subset S^3$ bounds an order~$n+1$ (twisted) Whitney tower $\cW\subset B^4$. By the definition of connected sum, $S^3$ decomposes as the union of two disjoint $3$--balls $B_0$ and $B_1$ containing $L_0$ and $-L_1$, joined together by the $S^2\times I$ through which passes the bands guiding the connected sum. 
Taking another $4$--ball with the same decomposition of its boundary $3$--sphere, and gluing the $4$--balls together by identifying the 
boundary $2$--spheres of the $3$--balls, and identifying the $S^2\times I$ subsets by the identity map, forms $S^3\times I$ containing
an order $n+1$ (twisted) Whitney tower concordance between $L_0$ and $-L_1$ which consists of $\cW$ together with the parts of the connected-sum bands that are contained in $S^2\times I$.


\section{Proof of Theorem~\ref{thm:twisted-order-raising-on-A} and the twisted IHX lemma}\label{sec:proof-twisted-thm}
This section contains a proof of the ``twisted order-raising'' Theorem~\ref{thm:twisted-order-raising-on-A} of Section~\ref{sec:w-towers}, which was used (along with Corollary~\ref{cor:tau=w-concordance}) in Section~\ref{sec:realization-maps} to prove 
Theorem~\ref{thm:twisted} of the introduction. A key step in the proof involves a geometric realization of the 
twisted IHX relation as described in Lemma~\ref{lem:twistedIHX} below. 

At the end of the section, the proof of Proposition~\ref{prop:exact sequence} is completed by Lemma~\ref{lem:boundary-twisted-IHX} in \ref{subsec:boundary-twisted-IHX-lemma} which shows how any order $2n$ twisted Whitney tower can be converted into an order $2n-1$ framed Whitney tower. 

\subsection{Proof of Theorem~\ref{thm:twisted-order-raising-on-A}}\label{subsec:proof-of-twisted-order-raising-thm}
Recall the statement of Theorem~\ref{thm:twisted-order-raising-on-A}: 
If a collection $A$ of properly immersed surfaces in a simply connected $4$--manifold supports an order $n$ twisted Whitney tower $\cW$
with $\tau^\iinfty_n(\cW)=0\in\cT^\iinfty_n$, then $A$ is regularly homotopic (rel $\partial$) to $A'$ supporting an order $n+1$ twisted Whitney tower.

Recall also from subsection~\ref{subsec:int-forests} that the intersection forest $t(\cW)$ of an order 
$n$ twisted Whitney tower $\cW$
is a disjoint union of signed trees which can be considered to be immersed in $\cW$. The order $n$ trees in $t(\cW)$ (together with the order $n/2$ 
$\iinfty$-trees if $n$ is even) represent $\tau_n^\iinfty(\cW)\in\cT^\iinfty_n$, and the proof of Theorem~\ref{thm:twisted-order-raising-on-A} involves controlled manipulations of $\cW$ which first convert $t(\cW)$ into ``algebraically canceling'' pairs of isomorphic trees with opposite signs, and then exchange these for ``geometrically canceling'' intersection points which are paired by a new layer of Whitney disks. We pause here to clarify these
notions:


\subsubsection{Algebraic versus geometric cancellation}\label{subsubsec:alg-vs-geo-cancellation}
If Whitney disks $W_I$ and $W_J$ in $\cW$ intersect transversely in a pair of points $p$ and $p'$, then $t_p$ and $t_{p'}$ are isomorphic (as labeled, oriented trees). If $p$ and $p'$ have opposite signs, and if the ambient $4$--manifold is simply connected, then there exists a Whitney disk $W_{(I,J)}$ pairing $p$ and $p'$,
and we say that $\{\,p\,,\,p'\,\}$ is a \emph{geometrically canceling} pair.
In this setting we also refer to $\{\,\epsilon_p\cdot t_p\,,\,\epsilon_{p'}\cdot t_{p'}\,\}$ as a geometrically canceling pair of signed trees in $t(\cW)$ (regarding them as subsets of $\cW$ associated to the geometrically canceling pair of points).

On the other hand, given transverse intersections $p$ and $p'$ in $\cW$ with $ t_p = t_{p'}$ 
(as labeled oriented trees) and $\epsilon_p=-\epsilon_{p'}$, we say that $\{\,p\,,\,p'\,\}$ is an \emph{algebraically canceling} pair of intersections, and similarly call $\{\,\epsilon_p\cdot t_p\,,\,\epsilon_{p'}\cdot t_{p'}\,\}$ an algebraically canceling pair of signed trees in $t(\cW)$. Changing the orientations at a \emph{pair} of trivalent vertices in any tree $t_p$ does not change its value in $\cT$ by the AS relations, and (as discussed in 3.4 of \cite{ST2}) such orientation changes can be realized by changing orientations of Whitney disks in $\cW$ together with our orientation conventions (\ref{subsec:w-tower-orientations}).

Any geometrically canceling pair is also an algebraically canceling pair, but the converse is clearly not true as an algebraically canceling pair can have \emph{corresponding trivalent vertices} lying in \emph{different Whitney disks}.  A process for converting algebraically canceling pairs into geometrically canceling pairs by manipulations of the Whitney tower is described in 4.5 and 4.8 of \cite{ST2}.

Similarly, if a pair of twisted Whitney disks $W_{J_1}$ and $W_{J_2}$ have isomorphic (unoriented) trees $J^\iinfty_1$ and $J^\iinfty_2$ with opposite twistings $\omega(W_{J_1})=-\omega (W_{J_2})$, then the Whitney disks form an 
\emph{algebraically canceling} pair (as do the corresponding signed $\iinfty$-trees in $t(\cW)$). Note that the orientations of the $\iinfty$-trees are not relevant here by the independence of $\omega(W)$ from the orientation of $W$ and the symmetry relations in $\cT^\iinfty$. A geometric construction for eliminating algebraically canceling pairs of twisted Whitney disks from a twisted Whitney tower will be described below.


\textbf{Outline of the proof:}
To motivate the proof of Theorem~\ref{thm:twisted-order-raising-on-A} we summarize here how the methods of 
\cite{CST,S1,ST2} (as described in Section~4 of \cite{ST2})
apply in the framed setting to prove the analogous order-raising theorem in framed setting (Theorem~\ref{thm:framed-order-raising-on-A} of Section~\ref{sec:w-towers}): The first part of the proof changes the intersection forest $t(\cW)$ so that all trees occur in algebraically canceling pairs by using the $4$--dimensional IHX construction of \cite{CST} to realize IHX relators, and by adjusting Whitney disk orientations as necessary to realize AS relations. The second part of the proof uses the Whitney move IHX construction of \cite{S1}
to ``simplify'' the shape of the algebraically canceling pairs of trees. Then the third part of the proof uses controlled
homotopies to exchange the simple algebraic canceling pairs for geometrically canceling intersection points which are paired by a new layer of Whitney disks as described in 4.5 of \cite{ST2}. 

Extending these methods to the present twisted setting will require two variations: realizing the new relators in 
$\cT_n^\iinfty$, and achieving an analogous geometric cancellation for twisted Whitney disks corresponding to algebraically canceling pairs of (simple) $\iinfty$-trees. We will concentrate on these new variations, referring the reader to   
\cite{CST,S1,ST2} for the other parts just mentioned.


\textbf{Notation and conventions:}

By Lemma~\ref{lem:split-w-tower} it may be assumed that $\cW$ is split at each stage of the constructions throughout the proof, so that all trees in $t(\cW)$ are embedded in $\cW$.  

In spite of modifications, $\cW$ will not be renamed during the proof.

Throughout this section we
will notate elements of $t(\cW)$ as formal sums, representing disjoint union by juxtaposition.

Note that if $\cW$ is an order $n$ twisted Whitney tower, then the intersection forest $t(\cW)$ may contain
higher order trees and $\iinfty$-trees in addition to those representing $\tau_n^\iinfty(\cW)$. These higher-order
elements of $t(\cW)$ can be ignored throughout the proof for the following reasons: On the one hand, in a split $\cW$ all the 
constructions leading to the elimination of unpaired order $n$ intersections (and twisted Whitney disks of order $n/2$)
of $\cW$
can be carried out away from any higher-order
elements of $t(\cW)$. Alternatively, one could first exchange all twisted Whitney disks of order greater than $n/2$ for unpaired intersections of order greater than $n$ by boundary-twisting (Figure~\ref{boundary-twist-and-section-fig}). Then, all intersections of order greater than $n$ can be converted into into many algebraically canceling pairs of order $n$ intersections by repeatedly ``pushing down'' unpaired intersections
until they reach the order zero disks, as illustrated for instance in Figure~12 of \cite{S2} (assuming, as we may, that $\cW$ contains no Whitney disks of order greater than $n$). 

Thus, we can and will assume throughout the proof that $t(\cW)$ represents $\tau_n^\iinfty(\cW)$.

\textbf{The odd order case:} Given $\cW$ of order $2n-1$ with $\tau^\iinfty_{2n-1}(\cW)=0\in\cT^\iinfty_{2n-1}$, it will suffice to modify
$\cW$ --- while only creating unpaired intersections of order at least $2n$ and twisted Whitney disks of order at 
least $n$ --- so that all order $2n-1$ trees in $t(\cW)$ come in algebraically canceling
pairs of trees (since by
\cite{ST2} the corresponding algebraically canceling pairs of order $2n-1$ intersection points can be
exchanged for geometrically canceling intersections which are
paired by Whitney disks, as mentioned just above).

\begin{figure}
\centerline{\includegraphics[width=125mm]{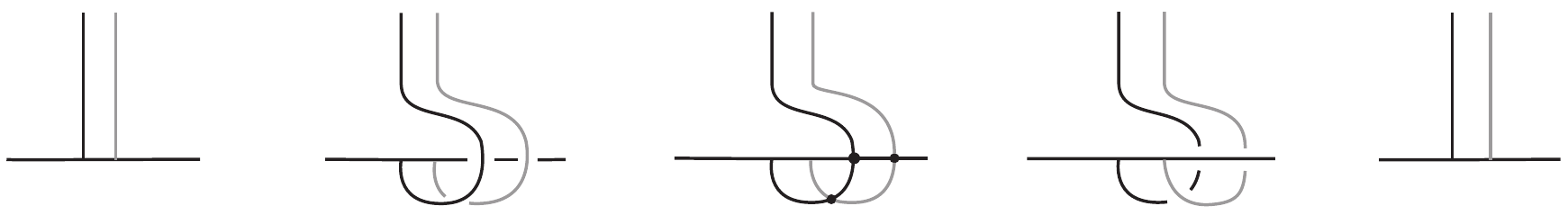}}
         \caption{Boundary-twisting a Whitney disk $W$ changes $\omega (W)$ by $\pm 1$ and creates an intersection point with one of the sheets paired by $W$. The horizontal arcs trace out part of the sheet, the dark non-horizontal arcs
         trace out the newly twisted part of a collar of $W$, and the grey arcs indicate part of the Whitney section over $W$. The bottom-most
         intersection in the middle picture corresponds to the $\pm 1$-twisting created  by the move.}
         \label{boundary-twist-and-section-fig}
\end{figure}

Since $\tau_{2n-1}^\iinfty(\cW)=0\in\cT^\iinfty_{2n-1}$, the intersection forest $t(\cW)$ is in the span of 
IHX and boundary-twist relators, after choosing Whitney disk orientations to realize AS relations as necessary. By locally creating intersection
trees of the form $+I -H  +X$ using  the 4-dimensional geometric IHX theorem of \cite{CST} 
(and by
choosing Whitney disk orientations to realize AS relations as needed), 
$\cW$
can be modified so that all order $2n-1$ trees in $t(\cW)$ either come in algebraically
canceling pairs, or are boundary-relator trees of the form
$\pm\langle (i,J),J \rangle$. 

For each tree of the form 
$t_p=\pm\langle (i,J),J \rangle$ we can create an algebraically canceling
 $t_{p'}=\mp\langle (i,J),J \rangle$ at the cost of only creating order~$n$
$\iinfty$-trees as follows. First use Lemma~14 of \cite{ST2} (Lemma~{3.6} of \cite{S1}) to move
the unpaired intersection point $p$ so that $p\in W_{(i,J)}\cap
W_J$. Now, by boundary-twisting $W_{(i,J)}$ into its supporting
Whitney disk $W'_J$ (Figure~\ref{boundary-twist-and-section-fig}), 
an algebraically canceling intersection $p'\in
W_{(i,J)}\cap W'_J$ can be created at the cost of changing the
twisting $\omega (W_{(i,J)})$ by $\pm 1$. 
Since $\langle (i,J),J \rangle$ has an order $2$ symmetry, the canceling sign can always be realized by a Whitney disk orientation choice. 
This algebraic cancellation of $t_p$ has been achieved at the cost of only adding to $t(\cW)$ the 
order $n$ $\iinfty$-tree $(i,J)^\iinfty$ corresponding to the $\pm 1$-twisted order $n$ Whitney disk 
$W_{(i,J)}$.


Having arranged that all the order $2n-1$ trees in $t(\cW)$ occur in algebraically canceling pairs,
applying the tree-simplification and geometric cancellation described in \cite{ST2} to all these algebraically canceling pairs 
yields an order $2n$ twisted Whitney tower $\cW'$.

\textbf{The even order case:} For $\cW$ of order $2n$ with $\tau^\iinfty_{2n}(\cW)=0\in\cT^\iinfty_{2n}$, we arrange for $t(\cW)$ to
consist of only algebraically canceling pairs of generators by
realizing all relators in $\cT_{2n}^\iinfty$,  then construct an order
$2n+1$ twisted Whitney tower by introducing a new method
for geometrically cancelling the pairs of twisted Whitney disks 
(while the algebraically canceling pairs of non-$\iinfty$ trees lead to geometrically canceling intersections as before):

First of all, the order $0$ case corresponding to linking numbers is easily checked, so we will assume $n\geq 1$.

The IHX relators and AS relations for non-$\iinfty$ trees can be realized as usual.

Note that any signed tree $\epsilon\cdot J^\iinfty\in t(\cW)$ does not depend on the orientation of the tree $J$ because changing the orientation on the corresponding twisted Whitney disk $W_J$ does not change $\omega (W_J)$.

For any rooted tree $J$ the relator $\langle J,J \rangle-2\cdot J^\iinfty$ corresponding to the 
interior-twist relation can be realized as follows. Use finger moves to create a clean
framed Whitney disk $W_J$. Performing a positive interior twist on $W_J$
as in Figure~\ref{InteriorTwistPositiveEqualsNegative-fig}  creates a
self-intersection $p\in W_J\cap W_J$ with $t_p=\langle J,J \rangle$ and
changes the twisting $\omega(W_J)$ of $W_J$ to $-2$. 
The negative of the relator is similarly constructed starting with a negative twist.
\begin{figure}[h]
\centerline{\includegraphics[width=125mm]{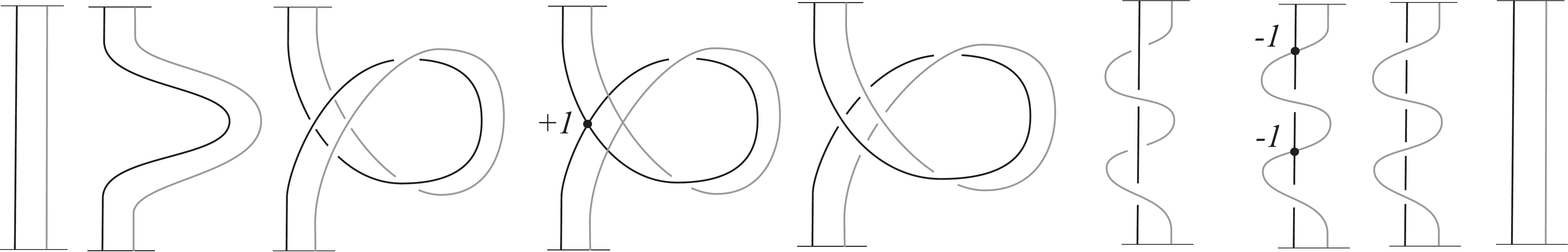}}
         \caption{A $+1$ interior twist on a Whitney disk changes the twisting by $-2$, as is seen in the pair of $-1$ intersections between the black vertical slice of the Whitney disk and the grey slice of a Whitney-parallel copy. Note that the pair of (positive) black-grey intersections near the $+1$ intersection is just an artifact of the immersion of the normal bundle into $4$--space and does not contribute to the relative Euler number.}
         \label{InteriorTwistPositiveEqualsNegative-fig}
\end{figure}

The relator $-I^\iinfty+H^\iinfty+X^\iinfty-\langle H,X \rangle$ corresponding to
the twisted IHX relation is realized as follows. For any
rooted tree $I$, create a clean framed Whitney disk $W_I$ by
finger moves. Then split this framed Whitney disk using the twisted
finger move of Lemma~\ref{lem:split-w-tower} into two clean twisted
Whitney disks with twistings $+1$ and $-1$, and associated signed
$\iinfty$-trees $+I^\iinfty$ and $-I^\iinfty$, respectively. The next step is
to perform a $+1$-twisted version (described in Lemma~\ref{lem:twistedIHX} below)
of the ``Whitney move IHX'' construction of Lemma~{7.2}
in \cite{S1}, which will replace the $+1$-twisted Whitney disk by two
$+1$-twisted Whitney disks having $\iinfty$-trees $+H^\iinfty$ and $+X^\iinfty$,
and containing a single negative intersection point with tree
$-\langle H,X \rangle$, where $H$ and $X$ differ locally from $I$ as in the usual IHX relation.
Thus, any Whitney tower can be modified to create
exactly the relator $-I^\iinfty+H^\iinfty+X^\iinfty-\langle H,X \rangle$, for any rooted tree $I$.
The negative of the relator can be similarly realized by using Lemma~\ref{lem:twistedIHX}
applied to the
$-1$-twisted $I$-shaped Whitney disk.

So since $\tau_{2n}^\iinfty(\cW)$ vanishes, it may be arranged, by realizing relators as above, that
all the trees in $t(\cW)$ occur in algebraically canceling pairs.
Now, by repeated applications of Lemma~\ref{lem:twistedIHX} below, the algebraically canceling pairs of clean $\pm 1$-twisted Whitney disks
can be exchanged for (many) algebraically canceling pairs of clean $\pm 1$-twisted Whitney disks,
all of whose trees are \emph{simple} (right- or left-normed), with the $\iinfty$-label at one end of the tree as illustrated in Figure~\ref{simple-infty-tree-fig} -- this also creates more algebraically canceling pairs of non-$\iinfty$ trees (the ``error term'' trees in
Lemma~\ref{lem:twistedIHX}).

As in the odd case, all algebraically canceling pairs of
intersections with
non-$\iinfty$ trees can be exchanged for geometrically canceling pairs by \cite{ST2}.
To finish building the desired 
order~$2n+1$ twisted Whitney tower, we will describe how to eliminate the remaining algebraically canceling pairs of clean twisted order~$n$ Whitney disks (all having simple trees)
using a construction that bands together Whitney disks and is additive on twistings.
This construction is an iterated elaboration of a construction originally from Chapter~10.8 of \cite{FQ} (which was used to show that
that $\tau_1 \otimes \Z_2$ did not depend on choices of pairing intersections by Whitney disks).

 \begin{figure}[h]
\centerline{\includegraphics[width=75mm]{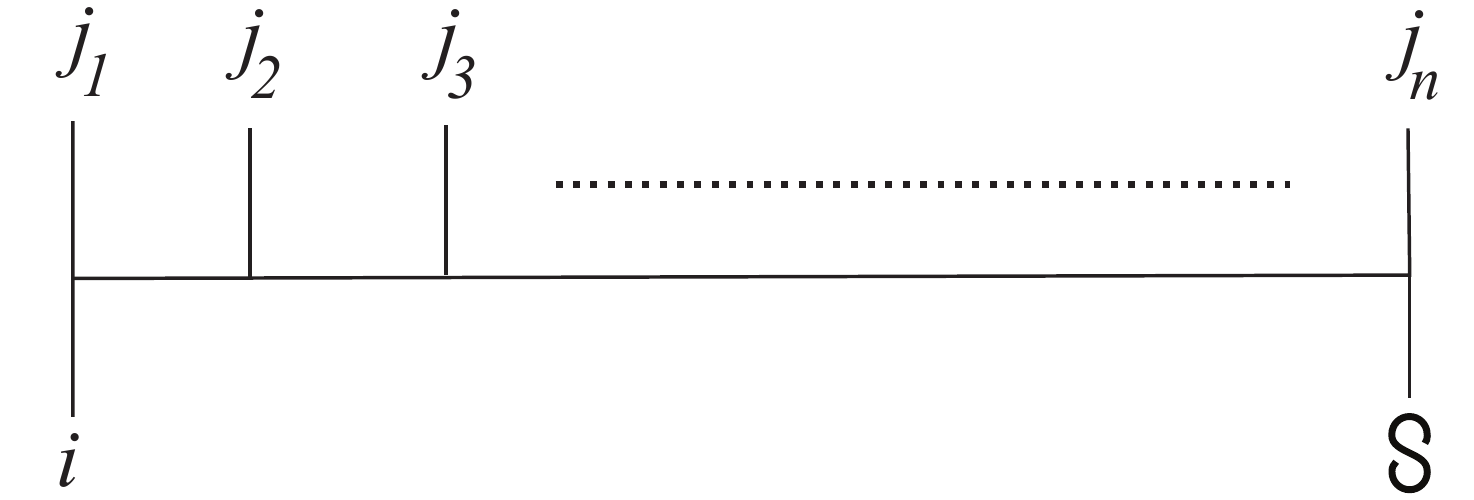}}
         \caption{The simple twisted tree $J_n^\infty$.}
         \label{simple-infty-tree-fig}
\end{figure}

Consider an algebraically canceling pair of clean $\pm 1$-twisted Whitney disks $W_{J_n}$ and $W'_{J_n}$, whose simple $\iinfty$-trees $+ J_n^\iinfty$ and $-J_n^\iinfty$ are as in
Figure~\ref{simple-infty-tree-fig}, using the notation $J_n=(\cdots ((i,j_1),j_2),\cdots , j_n)$. Each trivalent vertex corresponds to a Whitney disk, and we will work from left to right, starting with the order one Whitney disks $W_{(i,j_1)}$ and $W'_{(i,j_1)}$,
banding together Whitney disks of the same order from the two trees, while only creating new unpaired intersections of order greater than $2n$. At the last step, $W_{J_n}$ and $W'_{J_n}$ will be banded together into a single framed clean Whitney disk, providing the desired geometric cancellation. (The reason for working with \emph{simple} trees is that the construction for achieving geometric cancellation requires \emph{connected}
surfaces for certain steps. For instance, the following construction only gets started because the left most trivalent vertices
of an algebraically canceling pair of simple trees correspond to Whitney disks which pair the connected order zero surfaces $D_i$ and $D_{j_1}$.)

To start the construction consider the Whitney disks $W_{(i,j_1)}$ and $W'_{(i,j_1)}$, pairing intersections between the order zero immersed disks $D_i$ and $D_{j_1}$. Let $V$ be another Whitney disk for a canceling pair consisting of one point from each of the points paired by
$W_{(i,j_1)}$ and $W'_{(i,j_1)}$. Figure~\ref{bandedWdisksWithTwistsNoHigherInts} illustrates how a parallel copy $V'$ of $V$ can be banded together
with $W_{(i,j_1)}$ and $W'_{(i,j_1)}$ to form a Whitney disk $W''_{(i,j_1)}$ for the remaining canceling pair. The twisting of $W''_{(i,j_1)}$ is the sum of the twistings on $W_{(i,j_1)}$, $W'_{(i,j_1)}$, and $V$; so $W''_{(i,j_1)}$ is framed if $V$ is framed, since both $W_{(i,j_1)}$ and $W'_{(i,j_1)}$ are framed for $n>1$ (and in the $n=1$ case $W_{(i,j_1)}=W_{J_n}$ and $W'_{(i,j_1)}=W'_{J_n}$ contribute canceling $\pm 1$ twistings). If $V$ is both framed and clean, then the result of
replacing $W_{(i,j_1)}$ and $W'_{(i,j_1)}$ by $V$ and $W''_{(i,j_1)}$ preserves the order of $\cW$ and creates no new intersections.

So if $n=1$, then $W_{J_n}$ and $W'_{J_n}$ have been geometrically canceled, meaning that their corresponding $\iinfty$-trees have been eliminated from $t(\cW)$ without creating any new unpaired order $2n$ intersections or new twisted order $n$ Whitney disks.

\begin{figure}[h]
\centerline{\includegraphics[width=115mm]{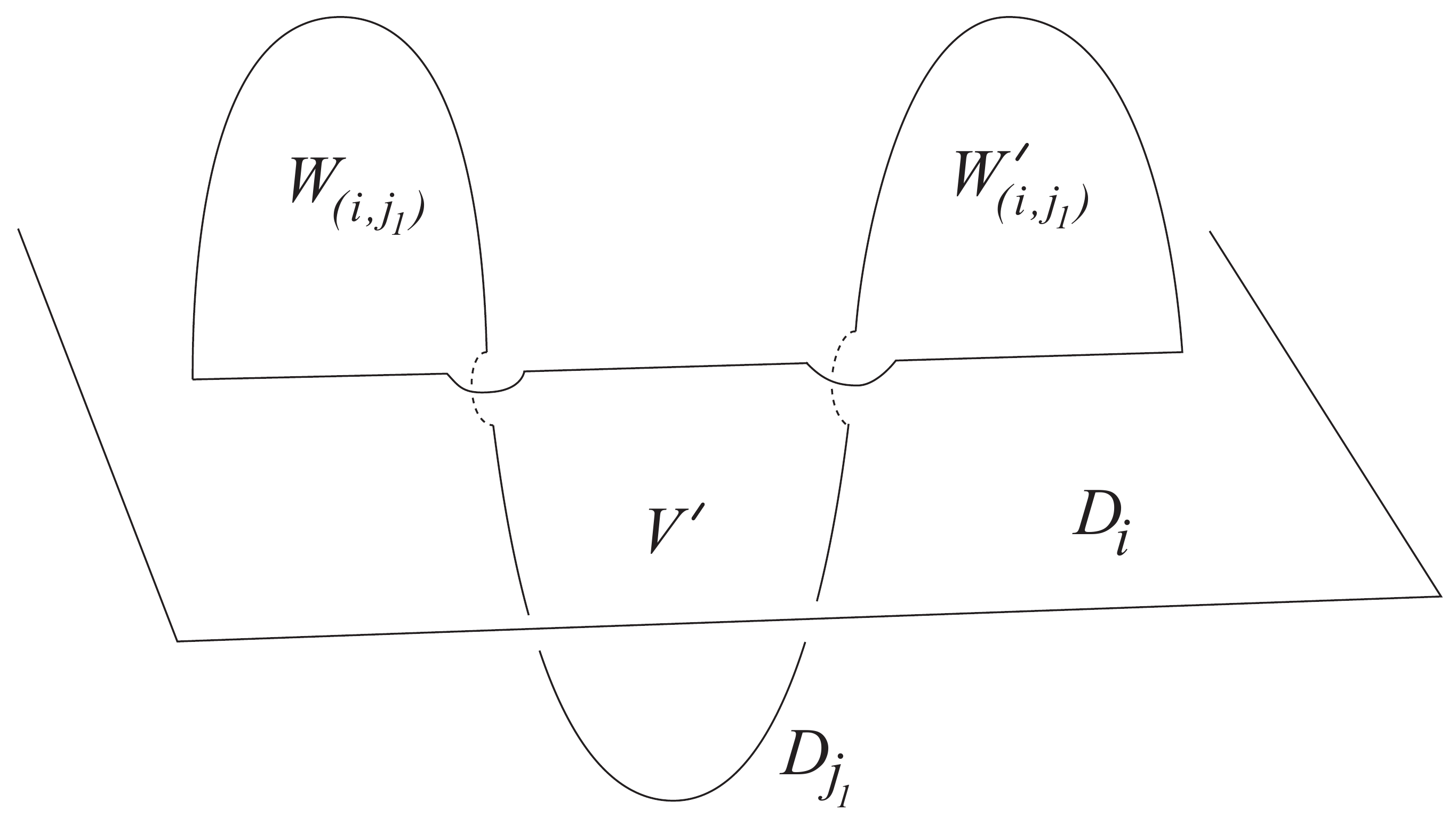}}
         \caption{The Whitney disks $W_{(i,j_1)}$, $W'_{(i,j_1)}$, and $V'$ are 
         banded together to form the Whitney disk
         $W''_{(i,j_1)}$ pairing the outermost pair of intersections between $D_i$ and $D_{j_1}$.
         In the cases $n>1$, the interior of $W''_{(i,j_1)}$ contains two pairs of canceling intersections with $D_{j_2}$ (which are not shown), and supports the sub-towers consisting of the rest of the higher-order Whitney disks (that were supported by $W_{(i,j_1)}$ and $W'_{(i,j_1)}$) corresponding to the trivalent vertices in both trees
$\pm J_n^\infty$.}
         \label{bandedWdisksWithTwistsNoHigherInts}
\end{figure}

The next step shows how $V$ can be arranged to be framed and clean, at the cost of only creating intersections of order greater than $2n$: Any twisting $\omega(V)$ can be killed by boundary twisting $V$ into $D_{j_1}$.  Then, using the construction shown in Figure~\ref{bandedWdisksWithPushDown}, any interior intersection between $V$
and any $K$-sheet (e.g.~an intersection with $D_{j_1}$ from boundary-twisting) can be pushed down into $D_i$ and paired by a thin Whitney disk $W_{(K,i)}$, which in turn has intersections with the $D_{j_1}$-sheet that can be paired by a Whitney disk
$W_{K_1}:=W_{((K,i),j_1)}$ made from a Whitney-parallel copy of $W_{(i,j_1)}$. 
Now, parallel copies of the Whitney disks from the sub-tower supported by $W_{(i,j_1)}$ can be used to build a sub-tower on $W_{K_1}$:
Using the notation $K_{r+1}=(K_r,i)$,
for $r=1,2,3,\ldots n$, the Whitney disk $W_{K_{r+1}}$ is built from a Whitney-parallel copy of $W_{J_r}$, and pairs intersections between $W_{K_r}$ and $j_r$. Note that the order of each $W_{K_r}$ is at least $r$.
The top order $W_{K_{n+1}}$ inherits the $\pm 1$-twisting from $W_{J_n}$, and has a single interior intersection with tree $\langle K_{n+1}, J_n \rangle$ which is of order at least $2n+1$. 

For multiple intersections between $V$ and various $K$-sheets this part of the construction can be carried out simultaneously using nested parallel copies of the thin Whitney disk in 
Figure~\ref{bandedWdisksWithPushDown} and more Whitney-parallel copies of the sub-towers described in the previous paragraph.

The result of the construction so far is that the left-most trivalent vertices of the trees $+ J_n^\iinfty$ and $-J_n^\iinfty$ now correspond to the \emph{same} order $1$ Whitney disk $W''_{J_1}=W''_{(i,j_1)}$, at the cost of having created (after splitting-off) a clean twisted Whitney disk of order at least $n+1$, and an unpaired intersection of order at least $2n+1$. In particular, this completes the proof for the case $n=1$.

\begin{figure}[h]
\centerline{\includegraphics[width=115mm]{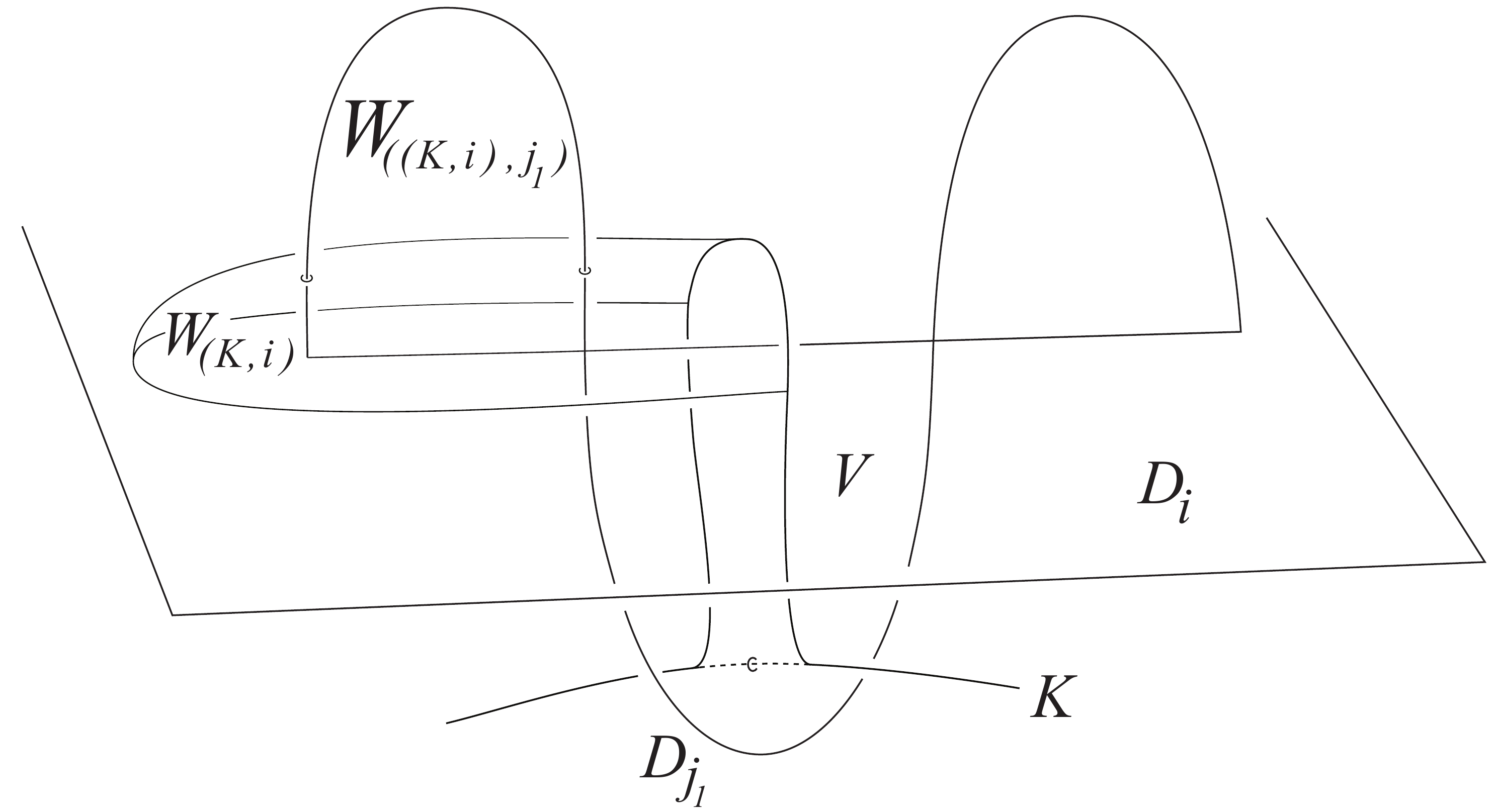}}
         \caption{}
         \label{bandedWdisksWithPushDown}
\end{figure}

For the cases $n>1$, observe that since $W''_{J_1}$ is \emph{connected}, this construction can be repeated, with $W''_{J_1}$ playing the role of $D_i$, and $D_{j_2}$ playing the role of $D_{j_1}$, to get a single order $2$ Whitney disk $W''_{J_2}$ which corresponds to the second trivalent vertices
from the left in both trees $+ J_n^\iinfty$ and $-J_n^\iinfty$. By iterating the construction, eventually we band together $W_{J_n}$ and $W'_{J_n}$ into a single framed clean Whitney disk at the last step, having created only clean twisted Whitney disks of order at least $n+1$, and unpaired intersections of order at least $2n+1$.

\subsection{The twisted IHX lemma}\label{subsec:twistedIHX}

The proof of Theorem~\ref{thm:twisted-order-raising-on-A} is completed by the following lemma which describes a
twisted geometric IHX relation
based on the framed version in Lemma~{7.2}
of \cite{S1}.

\begin{lem}\label{lem:twistedIHX}
Any split twisted Whitney tower $\cW$ containing a clean $+1$-twisted Whitney disk with signed $\iinfty$-tree
$+I^\iinfty$ can be modified (in a neighborhood of the Whitney disks and local order zero sheets corresponding to $I^\iinfty$)
to a twisted Whitney tower $\cW'$ such that $t(\cW')$ differs from $t(\cW)$ exactly by replacing
$+I^\iinfty$ with the signed trees $+H^\iinfty$, $+X^\iinfty$, and $-\langle H,X \rangle$, where $+I-H+X$ is a Jacobi relator.

Similarly, a clean $-1$-twisted Whitney disk with $\iinfty$-tree $-I^\iinfty$ in $t(\cW)$
can be replaced by $-H^\iinfty$, $-X^\iinfty$, and $+\langle H,X \rangle$ in $t(\cW')$.
\end{lem}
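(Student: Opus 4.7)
Plan: The approach is to adapt the framed ``Whitney-move IHX'' construction of Lemma~7.2 in \cite{S1} to the twisted setting, while carefully tracking how the $\pm 1$ twisting on the clean Whitney disk redistributes under the local modification. Since $W_I$ is clean, there are no intersections to worry about, and the entire content of the lemma is encoded in the local geometry together with the resulting twisting bookkeeping.

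First, I would recall the framed construction: given a clean framed Whitney disk $W_I$ sitting on top of a sub-tower realizing the bracketing $I$, one performs a Whitney move on a suitably chosen lower-order Whitney disk, guided by an arc in $\partial W_I$. The result is a pair of clean Whitney disks $W_H$ and $W_X$, whose rooted trees arise from $I$ by the Jacobi shuffle at the position of the Whitney move, together with exactly one transverse intersection between $W_H$ and $W_X$ whose associated tree is $\langle H,X\rangle$. A direct sign calculation using the negative-corner orientation convention of \ref{subsec:w-tower-orientations} shows this intersection is negative, contributing $-\langle H,X\rangle$ to the intersection forest.

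Second, I would apply this same local construction to the clean $+1$-twisted $W_I$. Because the construction is supported in a normal-bundle neighborhood of $W_I$ together with the Whitney disks just below it, the trees and the sign of the new intersection are unchanged from the framed case, and the only new analysis needed concerns the twistings $\omega(W_H)$ and $\omega(W_X)$. The claim is that both emerge as clean $+1$-twisted Whitney disks. I would verify this by tracking how the canonical Whitney section over $\partial W_I$ extends over $\partial W_H \cup \partial W_X$ through the Whitney move: away from the new intersection the move is locally framed, so the extended Whitney section on $\partial(W_H\cup W_X)$ agrees with that on $\partial W_I$ up to the local behavior near the new intersection, yielding $\omega(W_H)=\omega(W_X)=+1$. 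Hence $+I^\iinfty$ in $t(\cW)$ is replaced by exactly $+H^\iinfty$, $+X^\iinfty$, and $-\langle H,X\rangle$. The $-1$-twisted case follows by running the mirror-reversed construction, which flips all signs.

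The main obstacle is the twisting bookkeeping in the second step. One must verify that the single unit of twisting on $W_I$ distributes as exactly $+1$ on each of $W_H$ and $W_X$, rather than, for example, as $+2$ on one and $0$ on the other with a spurious self-intersection, or as $0$ on both with an extra contribution concentrated at the new $\langle H,X\rangle$ intersection. This requires direct inspection of how the Whitney framing rotates through the Whitney move, using that transverse intersections between two distinct Whitney disks do not contribute to the relative Euler number of either disk, unlike self-intersections which alter the twisting by $\pm 2$ as in Figure~\ref{InteriorTwistPositiveEqualsNegative-fig}. As a useful consistency check, the resulting replacement matches the quadratic identity $q(H+X)=H^\iinfty+X^\iinfty+\langle H,X\rangle$ of Remark~\ref{rem:quadratic-form} applied to the formal decomposition of $I$ as a Jacobi sum, which is precisely the content of the twisted IHX relation in $\cT^\iinfty$.
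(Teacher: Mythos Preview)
Your proposal has a genuine gap in the mechanism producing the $-\langle H,X\rangle$ term. You attribute this intersection to the framed Whitney-move IHX construction itself, claiming that already in the framed case ``the result is a pair of clean Whitney disks $W_H$ and $W_X$ \dots\ together with exactly one transverse intersection between $W_H$ and $W_X$.'' This is not correct: in the framed construction of \cite{S1}, the $H$- and $X$-Whitney disks are built as Whitney-parallel copies of the original $I$-Whitney disk, and when that disk is framed these parallel copies are disjoint. No $\langle H,X\rangle$ intersection appears in the framed case.

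The paper's argument is that the $-\langle H,X\rangle$ term and the inherited twistings arise from a single source: the $H$- and $X$-disks are Whitney-parallel copies of the original $+1$-twisted $W_I$. Because twisting is the relative Euler number (a self-intersection number), each parallel copy inherits $\omega=+1$, and because $\omega(W_I)=+1$ the two parallel copies intersect in exactly one point. The sign of that point is $-1$ because, in order to preserve the trivalent tree orientations, the $X$-disk must carry the opposite orientation from the $H$-disk. Your approach of tracking the Whitney section through the move misses this ``parallel copy'' picture entirely, and consequently decouples the twisting computation from the appearance of the intersection point. Without the parallel-copy description you have no mechanism explaining why exactly one intersection appears, nor a clean reason why both disks end up $+1$-twisted rather than, as you yourself flag, some other distribution.
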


\begin{proof}
Before describing how to adapt the construction and notation of
\cite{S1} to give a detailed proof of Lemma~\ref{lem:twistedIHX}, we explain why the framed relation
$+I=+H-X$ leads to the twisted relation $+I^\iinfty=+H^\iinfty+X^\iinfty-\langle H,X \rangle$. In the framed case, a Whitney disk
with tree $I$ is replaced by Whitney disks with trees $H$ and $X$, such that the new Whitney disks are parallel
copies of the original using the Whitney framing, and inherit the framing of the original. In order to preserve the trivalent
vertex orientations of the trees, the orientation of the H-Whitney disk is the same as the original 
I-Whitney disk, and the
orientation of the X-Whitney disk is the opposite of the I-Whitney disk. Now, if the original I-Whitney disk was 
$+1$-twisted,
then both the H- and X-Whitney disks will inherit this same $+1$-twisting, because the twisting -- which is a self-intersection number -- is independent of the Whitney disk orientation. 
The H- and X-Whitney disks will also intersect in a single point with sign $-1$,
since they inherited opposite orientations from the I-Whitney disk. Thus, (after splitting) a twisted Whitney tower can be modified so that a $+I^\iinfty$ is replaced by exactly $+H^\iinfty+X^\iinfty-\langle H,X \rangle$ in the intersection forest. Similarly, a $-I^\iinfty$ can be replaced exactly by $-H^\iinfty-X^\iinfty+\langle H,X \rangle$.

The framed IHX Whitney-move construction is described in detail in \cite{S1} (over four pages, including six figures).
We describe here how to adapt that construction to the present twisted case, including the relevant modification of notation.
Orientation details are not given in \cite{S1}, but all that needs to be checked is that the X-Whitney disk inherits the opposite
orientation as the H-Whitney disk (given that the tree orientations are preserved, and using our negative-corner orientation convention in \ref{subsec:w-tower-orientations} above). In Lemma~{7.2}
of \cite{S1}, the ``split sub-tower $\cW_p$''  refers to the Whitney disks and order zero sheets containing the tree $t_p$ of an unpaired intersection $p$ in a split Whitney tower $\cW$. In the current setting, a clean $+1$-twisted Whitney disk $W$ plays the role of $p$, and the construction will modify $\cW$ in a neighborhood of the Whitney disks and order zero sheets containing the $\iinfty$-tree associated to $W$.
In the notation of Figure~18 of \cite{S1}, the sub-tree of the I-tree denoted by $L$ contains $p$, so to interpret the entire construction in our case only requires the understanding that this sub-tree contains the $\iinfty$-label sitting in $W$. (Note that in Figure~18 of \cite{S1} the labels $I$, $J$, $K$ and $L$ denote \emph{sub}-trees, and in particular the $I$-labeled sub-tree should not be confused with the ``I-tree'' in the IHX relation.)

In the case where the $L$-labeled sub-tree is order zero, then $L$ is just the $\iinfty$-label, and the upper trivalent vertex of the I-tree in Figure~18 of \cite{S1} corresponds to the clean $+1$-twisted $W$, with $\iinfty$-tree $((I,J),K)^\iinfty$. Then the construction, which starts by performing a Whitney move on the framed Whitney disk
$W_{(I,J)}$ corresponding to the lower trivalent vertex of the I-tree, yields the $+1$-twisted
H- and X-Whitney disks as discussed in the first paragraph of this proof, with $\iinfty$-trees $(I,(J,K))^\iinfty$ and $(J,(I,K))^\iinfty$, and non-$\iinfty$ tree $\langle (I,(J,K)),(J,(I,K))\rangle$ corresponding to the resulting unpaired intersection (created by taking Whitney-parallel copies of the twisted $W$ to form the H- and X-Whitney disks).

In the case where the $L$-labeled sub-tree is order $1$ or greater, then the upper trivalent vertex of the
I-tree in Figure~18 of \cite{S1} corresponds to a framed Whitney disk, and Whitney-parallel copies of this framed Whitney disk and the other Whitney disks corresponding to
the $L$-labeled sub-tree are also used to construct the sub-towers containing the $+1$-twisted Whitney disks with H and X $\iinfty$-trees (which will again will lead to a single unpaired intersection as before).
\end{proof}

\subsection{Twisted even and framed odd order Whitney towers.}\label{subsec:boundary-twisted-IHX-lemma}
To complete the proof of Proposition~\ref{prop:exact sequence} in the introduction, the following lemma implies that 
$\bW^\iinfty_{2n}\subset \bW_{2n-1}$:
\begin{lem}\label{lem:boundary-twisted-IHX}
If a collection $A$ of properly immersed surfaces in a simply connected $4$--manifold supports an order $2n$ \emph{twisted} Whitney tower, then $A$ is homotopic (rel $\partial$) to 
$A'$ which supports an order $2n-1$ \emph{framed} Whitney tower.
\end{lem}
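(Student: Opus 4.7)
The plan is to convert the given order $2n$ twisted Whitney tower $\cW$ supporting $A$ into an order $2n-1$ framed Whitney tower by framing every twisted Whitney disk via a boundary-twist, while ensuring that every newly created unpaired intersection has order at least $2n-1$. First, apply Lemma~\ref{lem:split-w-tower} to split $\cW$ so that every twisted Whitney disk is clean and carries twisting exactly $\pm 1$. The order zero surfaces $A$ are modified only by regular homotopies (rel $\partial$) during the entire argument, so the final collection $A'$ will be homotopic (rel $\partial$) to $A$ as required.

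The key preparatory step is to arrange that every twisted Whitney disk has a \emph{simple} right-normed tree of the form shown in Figure~\ref{simple-infty-tree-fig}, via iterated application of the twisted IHX Lemma~\ref{lem:twistedIHX} (re-splitting as needed between applications). Each such application replaces a clean $\pm 1$-twisted Whitney disk of order $k$ with $\iinfty$-tree $\pm I^\iinfty$ by two clean $\pm 1$-twisted Whitney disks of the same order $k$ with $\iinfty$-trees $\pm H^\iinfty$ and $\pm X^\iinfty$, together with one additional unpaired intersection whose non-$\iinfty$ tree $\mp\langle H,X\rangle$ has order $2k$. Since every twisted Whitney disk in $\cW$ has order at least $n$, each such new intersection has order $2k \geq 2n \geq 2n-1$, which is admissible in an order $2n-1$ framed Whitney tower. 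Termination of the simplification process is the standard quasi-Lie-algebraic reduction of brackets to right-normed form, as invoked in the proof of Theorem~\ref{thm:twisted-order-raising-on-A}.

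Finally, for each resulting simple clean $\pm 1$-twisted Whitney disk $W_{J_k}$, whose tree $J_k = (J_{k-1}, j_k)$ splits at the root into a sub-tree $J_{k-1}$ of order $k-1$ and a single label $j_k$, perform a single boundary-twist (Figure~\ref{boundary-twist-and-section-fig}) along the $W_{J_{k-1}}$ side, choosing its sign to cancel the existing twisting of $W_{J_k}$. This frames $W_{J_k}$ at the cost of creating exactly one new unpaired intersection $p \in W_{J_k} \cap W_{J_{k-1}}$ with associated tree $\langle J_k, J_{k-1}\rangle$ of order $k+(k-1) = 2k-1 \geq 2n-1$. After completing these boundary-twists for every simple twisted Whitney disk, every Whitney disk in the resulting tower is framed, every intersection that was paired in $\cW$ remains paired, and every newly created unpaired intersection has order at least $2n-1$; thus $A'$ supports an order $2n-1$ framed Whitney tower. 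The main obstacle is precisely the control of the order of the boundary-twist intersection: for a twisted Whitney disk whose root splits into two sub-trees both of order $\geq 1$, the best achievable boundary-twist intersection has order strictly less than $2n-1$ in general, which is why the preparatory simplification to simple shape — where one sub-tree at the root is a bare label and the other carries all $k-1$ remaining trivalent vertices — is essential to the argument.
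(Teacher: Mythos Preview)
Your proof is correct and follows essentially the same strategy as the paper's: use the twisted IHX move (Lemma~\ref{lem:twistedIHX}) to arrange that each twisted Whitney disk has one boundary arc on an order-zero surface, then boundary-twist into the higher-order side to frame it while creating only intersections of order $\geq 2n-1$. The one difference is that you simplify all the way to right-normed (simple) $\iinfty$-trees, whereas the paper stops as soon as the root of each $\iinfty$-tree splits as $(i,I)$ with $i$ a single label; the interior shape of $I$ is irrelevant to the boundary-twist computation, so your extra simplification is harmless but not needed.
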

\begin{proof}
Let $\cW$ be any order $2n$ twisted Whitney tower $\cW$ supported by $A$. If $\cW$ contains no order $n$ non-trivially twisted Whitney disks, 
then $\cW$ is an order $2n$ framed Whitney tower, hence also is an order $2n-1$ framed Whitney tower. If $\cW$ does contain order $n$ non-trivially twisted Whitney disks, they can be eliminated at the cost of only creating intersections of order at least $2n-1$ as follows:

Consider an order $n$ twisted Whitney disk $W_J\subset\cW$ with twisting $\omega(W_J)=k\in\Z$. If $W_J$ pairs intersections between an order zero surface $A_i$ and an order $n-1$ Whitney disk $W_I$ then $J=(i,I)$, and by performing $|k|$ boundary-twists of $W_J$ into $W_I$, $W_J$ can be made to be framed at the cost of only creating $|k|$ order $2n-1$ intersections, whose corresponding trees are of the form $\langle\,(i,I),I\,\rangle$.

If $W_J$ pairs intersections between two Whitney disks, then by applying the twisted geometric IHX move of
Lemma~\ref{lem:twistedIHX} as necessary, $W_J$ can be replaced by a union of order $n$ twisted Whitney disks
each having a boundary arc on an order zero surface as in the previous case, at the cost of only creating unpaired intersections of order $2n$, each of which is an error term in Lemma~\ref{lem:twistedIHX}.   
\end{proof}

\section{Proof of Theorem~\ref{thm:odd}}\label{sec:proof-thm-odd}
This section defines the doubling map $\Delta_{2n-1}:\Z_2\otimes\cT_{n-1}\rightarrow \cT_{2n-1}$ which determines the framing relations described in the introduction, and strengthens the obstruction theory for framed Whitney towers described in \cite{ST2} by showing that the vanishing
of $\tau_{2n-1}(\cW)$ in the reduced group $\widetilde{\cT}_{2n-1}:=\cT_{2n-1}/\im(\Delta_{2n-1})$ is sufficient for the promotion of $\cW$ to a Whitney tower of order 
$2n$. This means that $\cT_n$ can be replaced everywhere by 
$\widetilde{\cT}_n$ (with $\widetilde{\cT}_{2n}:=\cT_{2n}$) throughout Section~\ref{sec:realization-maps}, and in particular proves Theorem~\ref{thm:odd} of the introduction. 

\begin{defn}\label{def:Delta}
The \emph{doubling map} $\Delta_{2n-1}:\Z_2\otimes\cT_{n-1}\rightarrow \cT_{2n-1}$ is defined for generators $t\in\cT_{n-1}$ by
$$\Delta (t):=\sum_{v\in t} \langle i(v),(T_v(t),T_v(t))\rangle$$
where $T_v(t)$ denotes the rooted tree gotten by replacing $v$ with a root, and the sum is over all univalent vertices of $t$, with $i(v)$ the original label of the univalent vertex $v$. 
\end{defn}
That $\Delta_{2n-1}$ is well-defined as a homomorphism on $\cT_{n-1}$ is clear since AS and IHX relations go to doubled relations. The image of $\Delta_{2n-1}$ is $2$-torsion by AS relations and hence it factors through $\Z_2\otimes\cT_{n-1}$.
See Figure~\ref{fig:Delta trees} for explicit illustrations of $\Delta_1$ and $\Delta_3$.

\begin{figure}[h]
\centerline{\includegraphics[width=115mm]{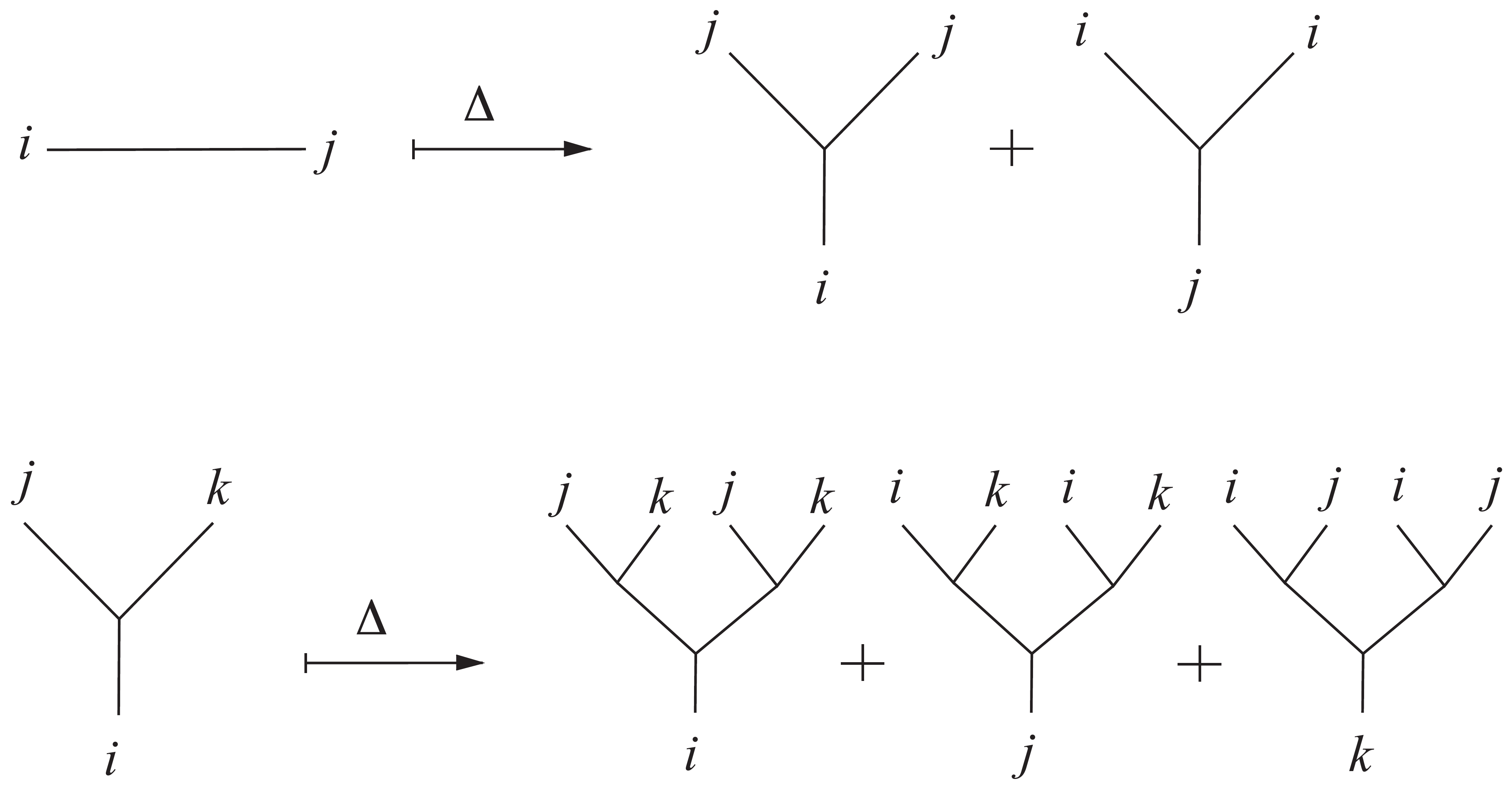}}
         \caption{The map $\Delta_{2n-1}:\Z_2\otimes\cT_{n-1}\rightarrow \cT_{2n-1}$ in the cases $n=1$ and $n=2$.}
         \label{fig:Delta trees}
\end{figure}

The following theorem strengthens Theorem~\ref{thm:framed-order-raising-on-A} in Section~\ref{sec:w-towers}.
\begin{thm}\label{thm:framed-order-raising-mod-Delta}
If a collection $A$ of properly immersed surfaces in a simply connected $4$--manifold supports a framed
 Whitney tower $\cW$ of order $(2n-1)$ with $\tau_{2n-1}(\cW)\in\im(\Delta_{2n-1})$, then $A$ is homotopic (rel $\partial$) to  $A'$ which supports a framed Whitney tower of order $2n$.
\end{thm}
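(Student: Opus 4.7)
The approach is to reduce to Theorem~\ref{thm:framed-order-raising-on-A}. Writing $\tau_{2n-1}(\cW)=\Delta_{2n-1}(t_0)$ for some $t_0\in\Z_2\otimes\cT_{n-1}$, it will suffice to regularly homotope $A$ (rel $\partial$) to a collection $A'$ supporting a framed Whitney tower $\cW'$ of order $2n-1$ with $\tau_{2n-1}(\cW')=0\in\cT_{2n-1}$; the cited theorem will then deliver the desired framed Whitney tower of order $2n$.

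The first step, following the odd-order case of the proof of Theorem~\ref{thm:twisted-order-raising-on-A}, uses the four-dimensional geometric IHX construction of \cite{CST} together with orientation changes realizing AS relations to arrange that $t(\cW)$ consists of algebraically canceling pairs of trees together with a collection of doubling trees $\pm\langle i(v),(T_v(t),T_v(t))\rangle$ which assemble into $\Delta(t_0)$. The algebraically canceling pairs are removed by the geometric cancellation of \cite{ST2}, leaving only the doubling-tree intersections. Next, each doubling tree $\langle(i(v),T_v(t)),T_v(t)\rangle$ corresponds to an unpaired order-$(2n-1)$ intersection $p\in W_{(i(v),T_v(t))}\cap W'_{T_v(t)}$. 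Performing a boundary twist of $W_{(i(v),T_v(t))}$ into its supporting Whitney disk (as in the odd-order case of the proof of Theorem~\ref{thm:twisted-order-raising-on-A}) creates an algebraically canceling intersection $p'$ and converts $W_{(i(v),T_v(t))}$ into a $\pm 1$-twisted order-$n$ Whitney disk; the pair $\{p,p'\}$ is then geometrically canceled by a framed Whitney disk via \cite{ST2}, exchanging each doubling-tree obstruction for a twisted order-$n$ Whitney disk.

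The main obstacle is removing these newly-twisted order-$n$ Whitney disks while remaining in the framed setting, at the cost only of intersections of order $\geq 2n$. The idea is to exploit the $\Z_2$ symmetry inherent in each summand $\langle i(v),(T_v(t),T_v(t))\rangle$ (which makes it $2$-torsion by AS): after the standardization step, one may arrange the boundary twists on Whitney-parallel copies of $W_{(i(v),T_v(t))}$ and choose their signs so that the resulting $\pm 1$-twisted clean Whitney disks occur in algebraically canceling pairs with matching $\iinfty$-trees and opposite twistings. Once paired, such twisted Whitney disks are eliminated by the banding construction from the even-order case of the proof of Theorem~\ref{thm:twisted-order-raising-on-A} (Figures~\ref{bandedWdisksWithTwistsNoHigherInts} and~\ref{bandedWdisksWithPushDown}), which replaces each pair by a single framed Whitney disk at the cost of only higher-order framed Whitney disks and order $\geq 2n$ intersections. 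That such a pairing can always be arranged is precisely the content of the containment $\im(\Delta_{2n-1})\subset\langle$boundary-twist relations$\rangle$ noted after Definition~\ref{def:T-infty-odd}: doubling trees vanish in $\cT^\iinfty_{2n-1}$, so the $\iinfty$-trees created by the boundary twists are forced to cancel up to higher-order correction. Making this algebraic match geometrically explicit, and sequencing the moves so that no new order-$(2n-1)$ intersections are introduced, is the heart of the proof; afterwards, the residual algebraically canceling pairs of higher-order non-$\iinfty$ trees are absorbed into $\cW'$ by \cite{ST2}, and Theorem~\ref{thm:framed-order-raising-on-A} completes the argument.
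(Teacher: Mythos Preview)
There is a genuine gap in the third paragraph. After boundary-twisting to kill each doubling tree $\langle (i(v),T_v(t)),T_v(t)\rangle$, you are left with a collection of $\pm 1$-twisted order~$n$ Whitney disks whose $\iinfty$-trees are $(i(v),T_v(t))^\iinfty$, one for each univalent vertex $v$ of $t$. You assert that these can be arranged in algebraically canceling pairs, but this is not true in general: for a single order~$(n-1)$ tree $t$ there are $n+1$ such vertices, and the rooted trees $(i(v),T_v(t))$ are pairwise non-isomorphic (they are the distinct rootings of the same underlying tree). When $n$ is even you have an \emph{odd} number of twisted Whitney disks with distinct $\iinfty$-trees, and no choice of signs will pair them. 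Your appeal to the containment $\im(\Delta_{2n-1})\subset\langle\text{boundary-twist relations}\rangle$ does not help: that containment is a tautology (each summand of $\Delta(t)$ \emph{is} a boundary-twist relator by definition), and it concerns the vanishing of order~$(2n-1)$ non-$\iinfty$ trees in $\cT^\iinfty_{2n-1}$, not any cancellation among the order~$n$ $\iinfty$-trees you have just created.

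The paper's argument avoids this problem by running the construction in the opposite direction: rather than destroying the doubling trees already present, it \emph{realizes} the relator $\Delta(t)$ geometrically and then cancels it against $t(\cW)$. For a given order~$(n-1)$ tree $\langle i,(I_1,I_2)\rangle$, one creates by finger moves a single clean \emph{framed} Whitney disk $W_{(i,(I_1,I_2))}$, then twist-splits it into a $+1$-twisted and a $-1$-twisted copy. One copy is left alone; the other is pushed, via repeated applications of the twisted IHX move (Lemma~\ref{lem:twistedIHX}), until all $\iinfty$-labels are adjacent to univalent vertices. Boundary-twisting every resulting twisted Whitney disk into its adjacent order-zero sheet then returns the tower to the framed setting, and the net change to $t(\cW)$ is exactly $\Delta(\langle i,(I_1,I_2)\rangle)$ plus trees of order~$\geq 2n$. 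The point is that the total twisting is zero throughout, since everything descends from one framed disk split into a $\pm 1$ pair; this is precisely the global constraint your approach lacks.
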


\begin{proof}
As discussed above in the outline the proof of Theorem~\ref{thm:twisted-order-raising-on-A} (just after subsubsection~\ref{subsubsec:alg-vs-geo-cancellation}), to prove Theorem~\ref{thm:framed-order-raising-mod-Delta} it will suffice to show that the intersection forest $t(\cW)$ can be changed by trees representing any element in
$\im(\Delta_{2n-1})<\cT_{2n-1}$ at the cost of only introducing trees of order greater than or equal to $2n$, so that the order $2n-1$ trees in $t(\cW)$ all occur in algebraically canceling pairs. Note that $\im(\Delta_{2n-1})$ is $2$-torsion by the AS relations, so orientations and signs are not an issue here. As in Section~\ref{sec:proof-twisted-thm}, elements of $t(\cW)$ will be denoted by formal sums, and $\cW$ will not be renamed as modifications are made.

\textbf{The case $n=1$:} Given any order zero tree $\langle i,j \rangle$, create a clean framed Whitney disk 
$W_{( i,j )}$ by performing a finger move between the order zero surfaces $A_i$ and $A_j$. Then use a twisted finger move 
(Figure~\ref{twist-split-Wdisk-fig}) to split $W_{( i,j )}$ into two twisted Whitney disks with associated trees $(i,j)^\iinfty -(i,j)^\iinfty$. 
Now boundary-twist each Whitney disk into a different sheet to recover the framing and add 
\[
\langle i,(i,j) \rangle +  \langle j,(i,j) \rangle = \Delta_1( \langle i,j \rangle )
\]
 to $t(\cW)$. Alternatively, after creating the framed $W_{( i,j )}$, perform an interior twist on $W_{( i,j )}$ to get 
$\omega(W_{( i,j )})=\pm2$, then kill $\omega(W_{( i,j )})$ by two boundary-twists, one into each sheet, again adding $\langle i,(i,j) \rangle +  \langle j,(i,j) \rangle$ to $t(\cW)$.
Note that $\im\Delta_1$ in $\cT_1$ corresponds to the order $1$ FR framing relation of \cite{S3,ST1}.

\begin{figure}[h]
\centerline{\includegraphics[scale=.35]{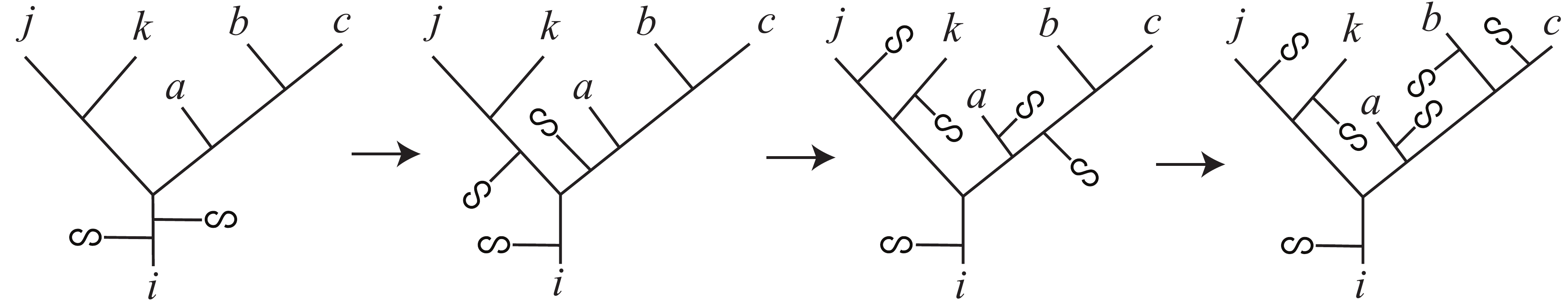}}
         \caption{Multiple $\infty$-roots attached to a tree represent sums (disjoint unions) of trees. On the left: the two trees that result from twist-splitting a clean $W_{( i, (I_1, I_2))}$ in the case $\langle i,(I_1, I_2) \rangle=\langle i, ((j,k),(a,(b,c))) \rangle$. Each arrow indicates an application of a twisted IHX Whitney move, which pushes $\infty$-roots towards the univalent vertices. The right-most sum of trees becomes the image of $\langle i,(I_1, I_2) \rangle$ under $\Delta$ after applying boundary-twists to the associated twisted Whitney disks.}
         \label{fig:Delta-infty-tree-example}
\end{figure}
\textbf{The cases $n>1$:} For any order $n-1$ tree $\langle i,(I_1, I_2) \rangle$, create a clean $W_{( i, (I_1, I_2))}$ by finger moves. (Here we are taking any order $n-1$ tree, choosing an $i$-labeled univalent vertex, and writing it as the inner product 
of the order zero rooted tree $i$ and the remaining order $n-1$ tree.)
Then split $W_{( i, (I_1, I_2))}$ using a twisted finger move to get two twisted Whitney disks each having associated $\iinfty$-tree
$( i, (I_1, I_2))^\iinfty$. 
Leave one of these twisted Whitney disks alone, and to the other apply the twisted geometric IHX Whitney-move (Lemma~\ref{lem:twistedIHX} of Section~\ref{sec:proof-twisted-thm}) to replace 
$( i, (I_1, I_2))^\iinfty$ by $( I_1, (I_2, i))^\iinfty+( I_2, (i,I_1))^\iinfty-\langle ( I_1, (I_2, i)),( I_2, (i,I_1))\rangle$ in $t(\cW)$. Note that the 
tree $\langle ( I_1, (I_2, i)),( I_2, (i,I_1))\rangle$ is order $2n$. If $I_1$ and $I_2$ are not both order zero then continue to 
apply the twisted geometric IHX Whitney-move (pushing the $\iinfty$-labeled vertices away from the $\iinfty$-labeled vertex that is adjacent to the original $i$-labeled vertex) until the resulting union of trees has all $\iinfty$-labeled vertices adjacent to a univalent vertex (all twisted Whitney disks have a boundary arc on an order zero surface) -- see Figure~\ref{fig:Delta-infty-tree-example} for an example.  Then, boundary-twisting each twisted Whitney disk into the order zero surface recovers the framing on each Whitney disk and the resulting change in $t(\cW)$ is a sum of trees as in the right hand side of the equation in Definition~\ref{def:Delta} representing the image of 
$\langle i,(I_1, I_2) \rangle$ under $\Delta_{2n-1}$, together with trees of order at least $2n$.
\end{proof}


\end{document}